\newtheorem{thm}{Theorem}[section]
\newtheorem{lem}[thm]{Lemma}
\newtheorem{prop}[thm]{Proposition}
\newtheorem{defn}[thm]{Definition}
\newtheorem{cor}[thm]{Corollary}
\newtheorem{rem}[thm]{Remark}
\numberwithin{equation}{section}
\def\Xint#1{\mathchoice
  {\XXint\displaystyle\textstyle{#1}}%
  {\XXint\textstyle\scriptstyle{#1}}%
  {\XXint\scriptstyle\scriptscriptstyle{#1}}%
  {\XXint\scriptscriptstyle\scriptscriptstyle{#1}}%
  \!\int}
\def\XXint#1#2#3{{\setbox0=\hbox{$#1{#2#3}{\int}$}
  \vcenter{\hbox{$#2#3$}}\kern-.5\wd0}}
\def\dashint{\Xint-}
                \newcommand{\pa}{\partial}
\newcommand{\va}{\varepsilon}           \newcommand{\ud}{\mathrm{d}}
\newcommand{\be}{\begin{equation}}      \newcommand{\ee}{\end{equation}}
\newcommand{\R}{\mathbb{R}}
\begin{document}

\begin{abstract}
We establish a global weighted $L^p$ estimate for the gradient of the solution to a divergence-form elliptic equations, where  the coefficients are in a weighted VMO space and the equations have singularities on a co-dimension two boundary.
\end{abstract}

\title[$L^p$ estimates, singular elliptic equations]{\textbf{On some divergence-form singular elliptic equations with codimension-two boundary: $L^p$-estimates}
\bigskip}
\subjclass{35J25, 35J75, 35A21}
\keywords{Singular elliptic equations, lower dimensional boundary, $L^p$ estimates, weighted Sobolev spaces}
\author{\medskip Jie Ji,  \  \
Jingang Xiong } 
\address[J. Ji \& J. Xiong]{School of Mathematical Sciences, Beijing Normal University, Beijing 100875, China}
\email{jie.ji@mail.bnu.edu.cn}
\email{jx@bnu.edu.cn}
\thanks{J. Xiong is partially supported by NSFC grants 12325104 and 12271028.}
\date{\today}
\maketitle

\section{Introduction}

Let $n\ge 2$,  $\Omega=\mathbb{R}^n\setminus \mathbb{R}^{n-2}
=\{x=(x',x'')\in \R^n: x'=(x^1,x^2)\neq (0,0), x''= (x^3,\dots,x^n)\}$, and  $\Gamma$ be the boundary of $\Omega$. Denote $\rho(x)=|x'|$ as the distance to $\Gamma$.  Let  $B_r'(x'_0) \subset \mathbb{R}^2$ be the open disc centered at $x_0'$ with radius $r$, and $Q_r(x_0)=B_r'(x^\prime_0)\times \prod^{n}_{i=3}(x^i_0-r, x^i_0+r)\subset \mathbb{R}^n$.
For brevity, we write $B_r'=B_r'(0)$ and $Q_r=Q_r(0)$. For fixed constants $\alpha \geq \frac{1}{2}$ and $\lambda >0$, we study  the singular elliptic equation 
\begin{equation} \label{equation}
-\sum_{i,j=1}^n\partial_i(\rho^{-2\alpha}(a^{ij}\partial_ju- F_i)	)+\lambda \rho^{-2\alpha}u=\sqrt{\lambda}\rho^{-2\alpha}f \quad \text{in} \; \Omega
\end{equation}
with the zero Dirichlet boundary condition
\begin{equation}\label{boundary}
 	u=0\quad  \mbox{on }\Gamma, 
 \end{equation} 
 where $\partial_i= \frac{\partial }{\partial x^{i}}$,   $\big(a^{ij}(x)\big)_{i,j=1}^n$ is symmetric and satisfies 
\begin{equation}\label{elliptic1}\begin{aligned}
\kappa |\xi|^2 \leq  \sum_{i,j=1}^n a^{ij}(x) \xi_i \xi_j & \leq \kappa^{-1}|\xi|^2 \quad \forall \, \xi= (\xi_1, \xi_2, \cdots,\xi_n) \in \mathbb{R}^n, 
\end{aligned}\end{equation}
for every $x\in \Omega$ and some constant $\kappa \in (0,1)$. Here, $F=(F_1, F_2,\cdots, F_n)$ and $f$ are given data. Since $\alpha \geq \frac{1}{2}$, the weight $\rho^{-2\alpha}$ is so singular that \eqref{equation}-\eqref{boundary} has well-defined weak solutions, see, e.g., R\'akosnik \cite{RJ} about the associated weighted Sobolev spaces and their trace operators. 

Such kind of equations stem, for instance,  from  the harmonic map systems  with prescribed singularities, see Weinstein \cite{G1}, Li-Tian \cite{LT}, Nguyen \cite{NL}, and Han-Khuri-Weinstein-Xiong \cite{HX,HX25}.   For problems involving a (less regular) boundary $\Gamma$ of general co-dimension $d$, David-Feneuil-Mayboroda \cite{DF2}  established some local $L^p$ estimates for solutions to a linear PDE with coefficients multiplied by $\mathrm{dist}(\cdot,\Gamma)^{d+1-n}$,  specifically in the case where $\Gamma$ is Ahlfors regular. Hence, their $p$ is confined in certain ranges. Also, the solvability of the regularity boundary value problem  associated lower dimensional boundaries was obtained by Dai–Feneuil–Mayboroda \cite{DF}. 
More recently, Fioravanti \cite{GF} derived Schauder estimates up to the boundary $\Gamma$ for elliptic equations with H\"older continuous coefficients multiplied by $\mathrm{dist}(\cdot,\Gamma)^{a}$, in the regime where $a + d \in (0,1)$.

In this paper, we would like to establish a weighted $L^p$ weak solutions theory for \eqref{equation}-\eqref{boundary}. We begin by recalling some standard notation related to weighted spaces. Let  $\omega $  be a non-negative Borel measure on $\mathbb{R}^n$ and $p\in[1,\infty)$. For a $\omega$-measurable set  $\mathcal{D} \subset \mathbb{R}^n$, define the space $L^p(\mathcal{D},\mathrm{d}\omega)$ as the set of functions for which the norm 
\[ 
\|f\|_{L^p(\mathcal{D},\,\mathrm{d}\omega)}=\left(\int_{\mathcal{D}}|f(x)|^p\mathrm{d}\omega\right)^{1/p}
\]
is finite. 
The integral average of $f$ over $\omega$-measurable set $E$ is denoted by $\dashint_{E} f \mathrm{d}\omega$.  We say a vector field  $F=(F_1,\dots ,F_n)$ belongs to $ L^p(\mathcal{D},\mathrm{d}\omega)^n$ if each component  $F_i\in L^p(\mathcal{D},\mathrm{d}\omega)$ for $i=1,\dots, n$,  and we define $\|F\|_{L^p(\mathcal{D},\,\mathrm{d}\omega)}= \||F|\|_{L^p(\mathcal{D},\,\mathrm{d}\omega)}$. 
Similarly, the weighted Sobolev space  $W^{1,p}(\mathcal{D},\,\mathrm{d}\omega)$ is defined via the norm
\[
\|f\|_{W^{1,p}(\mathcal{D},\,\mathrm{d}\omega)}=\|f\|_{L^p(\mathcal{D},\,\mathrm{d}\omega)}+\|D f\|_{L^p(\mathcal{D},\,\mathrm{d}\omega)},
\]
where $Df$ denotes the weak derivative of $f$. Let $\mathring{W}^{1,p}(\mathcal{D},\mathrm{d}\omega)$ be  the closure of $C_c^\infty({\mathcal{D}}\setminus \Gamma)$ with respect to the norm $\|\cdot \|_{W^{1,p}(\mathcal{D},\mathrm{d}\omega)}$.  We also define $W^{1,p}_0(\mathcal{D},\mathrm{d}\omega)$ as the closure of $C_c^\infty(\overline{\mathcal{D}}\setminus\Gamma)$ with respect to  the norm $\|\cdot \|_{W^{1,p}(\mathcal{D},\mathrm{d}\omega)}$. 
For convenience, we shall use the following explicit measures 
\begin{equation}
\label{eq:measures}
\begin{split}
&\mathrm{d}\mu:=\rho^{-2\alpha}\mathrm{d}x, \\
&\mathrm{d}\mu_\sigma:=\rho^{2\alpha-2\sigma-2}\mathrm{d}x,
\end{split}
\end{equation}
where $\sigma \in(0, \frac{1}{2})$. Throughout this paper, we set 
\[
\gamma = -2\alpha + 1 + \sigma.
\]
It follows from R\'akosnik \cite{RJ} that  functions in $\mathring{W}^{1,p}(Q_r(x_0),\rho^{\gamma p}\mathrm{d}\mu_\sigma)$
 have zero trace on $\Gamma$ if $Q_r(x_0) \cap \Gamma \neq \varnothing$. 

Next,  we need the following type of BMO condition for the coefficients $\big(a^{ij}(x)\big)_{i,j=1}^n$. 

\medskip 

\noindent {\bf Assumption $(\delta_0,R_0)$.} Let $\delta_0 $ and $ R_0$ be two positive constants. We say that the coefficients $(a^{ij})$ satisfy the Assumption $(\delta_0,R_0)$ in $\Omega$, if \begin{equation}
	\max_{i,j=1,2,\cdots,n}\dashint _{Q_r(x_0) \cap \Omega} |a^{ij}-[a^{ij}]_{r,x_0}|\mathrm{d}\mu_\sigma \leq \delta_0, \quad \forall ~ r\in(0,R_0], ~ x_0\in \mathbb{R}^n, 
\end{equation}
where $[a^{ij}]_{r,x_0}=\dashint_{Q_r(x_0)}a^{ij}(x)\,\mathrm{d}x$.

Note that if the coefficients satisfy the weighted VMO condition, i.e., 
\[
\lim_{R\to 0} \sup_{x_0\in \mathbb{R}^n, r\in(0,R]} \max_{i,j=1,2,\cdots,n}\dashint _{Q_r(x_0) \cap \Omega} |a^{ij}-[a^{ij}]_{r,x_0}|\mathrm{d}\mu_\sigma=0,  
\]
the Assumption $(\delta_0,R_0)$ can be fulfilled by the appropriately rescaled coefficients. In application, the Assumption $(\delta_0,R_0)$ is weaker than the VMO  condition. 

The main result in this paper is as follows. 
 
\begin{thm}\label{main theorem} Let  $\alpha \in  [\frac{1}{2},\infty)$, $\kappa\in(0,1)$, $p\in (1,\infty)$, $\sigma\in(0,\frac{1}{2})$ and $R_0\in(0,\infty)$ be given constants.   There exist positive constants $\delta_0, \lambda_0$ and $C$,  depending only on $n,\alpha,\kappa,p $ and $\sigma$, such that the following assertion holds:\\
Suppose that the coefficients $(a^{ij})$ satisfy (\ref{elliptic1}) and  the Assumption $(\delta_0,R_0)$ in $\Omega$. Then, for any $\lambda\geq\lambda_0R_0^{-2}$, $f\in L^p(\Omega,\rho^{\gamma p}\mathrm{d}\mu_\sigma)$,  and $F\in  L^p(\Omega,\rho^{\gamma p}\mathrm{d}\mu_\sigma)^n$,  there exists a unique weak solution $$u\in \mathring{W}^{1,p}(\Omega,\rho^{\gamma p}\mathrm{d}\mu_\sigma)$$ to (\ref{equation})-(\ref{boundary}). 
Moreover, $u$  satisfies the estimate 
 \begin{equation}\label{main es}
\| Du\|_{L^p(\Omega, \,\rho^{\gamma p}\mathrm{d}\mu_\sigma)}	+\sqrt{\lambda}\|u\|_{L^p(\Omega,\,\rho^{\gamma p}\mathrm{d}\mu_\sigma)}	\leq C\left(\|F\|_{L^p(\Omega,\,\rho^{\gamma p}\mathrm{d}\mu_\sigma)}+\|f\|_{L^p(\Omega,\,\rho^{\gamma p}\mathrm{d} \mu_\sigma)}	\right).
\end{equation}
\end{thm}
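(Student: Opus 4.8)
The plan is to follow the now-standard Caffarelli--Peral/Krylov perturbation scheme for weighted $L^p$ estimates, adapted to the codimension-two singular geometry. The first and most important reduction is to the \emph{a priori} estimate \eqref{main es}: once this is in hand for all $p\in(1,\infty)$ (under Assumption $(\delta_0,R_0)$), existence and uniqueness follow by a routine continuity-method argument on the parameter $t$ interpolating between the model operator $-\operatorname{div}(\rho^{-2\alpha}\nabla u)+\lambda\rho^{-2\alpha}u$ (whose solvability in the $L^2$-based weighted space is classical, via the Lax--Milgram theorem on $\mathring W^{1,2}(\Omega,\rho^\gamma{}^2\mathrm d\mu_\sigma)$) and the general operator, combined with a density/duality argument to pass from $p=2$ to general $p$. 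By scaling (using $\lambda\ge\lambda_0R_0^{-2}$ to normalize $R_0$) I may assume $R_0=1$ throughout.

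The core is the interior-type estimate away from and across $\Gamma$. I would first establish the $L^2$ energy estimate on cylinders $Q_r(x_0)$, distinguishing the case $Q_r\cap\Gamma=\varnothing$ (where $\rho$ is comparable to a constant and one is in the classical uniformly elliptic regime on a cube, so Caccioppoli plus interior/boundary Lipschitz estimates for the frozen-coefficient operator apply directly) from the case $Q_r\cap\Gamma\neq\varnothing$ (the genuinely new situation). In the latter, after freezing the coefficients to the constant matrix $[a^{ij}]_{r,x_0}$, I need good pointwise/oscillation decay estimates for solutions $w$ of the homogeneous model equation $-\partial_i(\rho^{-2\alpha}\bar a^{ij}\partial_j w)+\lambda\rho^{-2\alpha}w=0$ with zero trace on $\Gamma$: specifically, that $|Dw|$ (suitably weighted) satisfies an $L^\infty$--$L^2$ bound on, say, $Q_{r/2}$ in terms of its weighted $L^2$ norm on $Q_r$, together with a Campanato-type decay of the excess $\dashint_{Q_s}|Dw-(Dw)_{Q_s}|^2\,\mathrm d\mu_\sigma$. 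This regularity for the model operator is where the codimension-two structure enters essentially; I expect it to come from separation of variables in the $x'$-disc combined with Fourier analysis in $x''$, or from the $A_2$-weight theory for $\rho^{-2\alpha}$-type degenerate equations (Fabes--Kenig--Serapioni), noting that $\rho^{\gamma p}\mathrm d\mu_\sigma=\rho^{(\gamma+ \text{const})}\mathrm dx$ with the exponent tuned by the condition $\sigma\in(0,\tfrac12)$ so that the relevant power weight is Muckenhoupt and the trace spaces behave well (this is exactly why R\'akosnik's framework is invoked).

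With the model estimates established, the scheme proceeds as follows: (i) decompose $Du = Dw + D(u-w)$ where $w$ solves the frozen homogeneous problem matching $u$'s boundary data on $Q_r$; the difference $u-w$ is controlled in weighted $L^2$ by $\|F\|_{L^2}$ on $Q_r$ plus the oscillation term $\big(\dashint|a^{ij}-\bar a^{ij}|^2\big)^{1/2}\|Du\|_{L^2}$, the latter made small by Assumption $(\delta_0,R_0)$ after a H\"older-with-exponents interpolation of the VMO-type hypothesis (which is stated with an $L^1$ average, so one needs a higher-integrability / self-improvement step to upgrade to an $L^2$ average, or alternatively one keeps an $L^q$ average with $q<2$ and uses reverse-H\"older for the $A_2$ weight); (ii) deduce a good-$\lambda$/level-set inequality for the maximal function $\mathcal M_{\mu_\sigma}(|Du|^2)$ with respect to the measure $\rho^{\gamma p}\mathrm d\mu_\sigma$, using that this measure is doubling and supports a Vitali covering lemma and a weighted maximal-function weak-(1,1) bound; (iii) integrate the level-set inequality against $t^{p/2-1}\,\mathrm dt$ to obtain $\|Du\|_{L^p(\rho^{\gamma p}\mathrm d\mu_\sigma)}$, first for $p\ge 2$, then for $1<p<2$ by a duality argument on the adjoint equation (which has the same structural form). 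The $\sqrt\lambda\|u\|$ term is carried along by the standard trick of adding a dummy variable / rescaling so that $\lambda$-dependence is tracked homogeneously, using $\lambda\ge\lambda_0$.

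The main obstacle I anticipate is step-ii's analytic core on cylinders meeting $\Gamma$: proving the boundary Lipschitz (or weighted $L^\infty$-gradient) and Campanato decay estimates for the \emph{constant-coefficient} model operator with the zero trace condition on a codimension-two set. Unlike the classical half-space case, $\Gamma$ has codimension two, so ``boundary regularity'' is really regularity across a thin singular set, and the natural scaling couples the two-dimensional variable $x'$ (where the weight lives) with the $(n-2)$-dimensional flat directions; one must show that the zero-trace condition in R\'akosnik's weighted Sobolev space genuinely forces the expected vanishing of $w$ (no anomalous homogeneous solutions), and that $Dw$ — including the singular-looking component $\rho^{-1}\partial_\theta w$ — is bounded in the weighted sense. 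I would handle this by an explicit expansion of $w$ in the eigenfunctions $e^{\mathrm i k\theta}$ on the circle and a Fourier/Bessel analysis in the $x''$ and radial variables, extracting the leading behavior $w\sim \rho|{\log\rho}|^{?}$ or $w\sim\rho^{?}$ near $\Gamma$ forced by the weight exponent, and verifying the decay estimate termwise; alternatively, one appeals to the degenerate-elliptic De Giorgi--Nash--Moser theory of Fabes--Kenig--Serapioni for $A_2$ weights after checking that $\rho^{-2\alpha}$ combined with the change of variables lands in the admissible class. Everything downstream (covering, maximal function, level sets, duality) is then a weighted but otherwise routine adaptation of Caffarelli--Peral / Dong--Kim.
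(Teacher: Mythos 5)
Your overall scheme --- freeze the coefficients, compare with the constant-coefficient model, run a mean-oscillation/maximal-function argument for $p>2$, and dualize for $p<2$ --- is the same as the paper's (which implements the measure-theoretic step via Fefferman--Stein sharp functions rather than good-$\lambda$ level sets, an inessential difference). However, the two places you defer are exactly where the paper's actual work lies, and your proposed shortcuts there do not close. For the model-problem regularity near $\Gamma$, your fallback to Fabes--Kenig--Serapioni requires $\rho^{-2\alpha}\in A_2$; since $\rho$ is the distance to a codimension-two set, $\rho^{-2\alpha}$ is not even locally integrable once $\alpha\ge 1$, so that route is unavailable on the full range $\alpha\ge\frac12$. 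Your primary route, separation of variables in $\theta$, is essentially what the paper does in Proposition \ref{optimal}, but with two necessary ingredients you omit: after diagonalizing the $2\times2$ block of $\overline{A}$ the angular operator is $-\frac1b\partial_\theta(b\,\partial_\theta)$ with a nonconstant $b(\theta)$, so the expansion is in its eigenfunctions rather than $e^{ik\theta}$; and, more seriously, each mode $a_k(t)$ ($t=-\log\rho$) solves an ODE with a non-decaying homogeneous solution (constant for $k=0$) and a forcing term coming from $\lambda u$ and the $x''$-derivatives, so the sharp rate $\rho^{2\alpha}$ cannot be read off ``termwise'' without an a priori decay of $u$ strictly better than the $\rho^{\alpha}$ that the zero-trace condition yields. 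The paper supplies this by a dedicated barrier/comparison iteration (Lemma \ref{lem3.5}, giving $|u|\le C_\varepsilon\rho^{2\alpha-\varepsilon}$ via supersolutions of the form $\rho^\beta+\rho^{\beta-\delta}|y-y_0|^2$), which it identifies as the main new difficulty; your sketch has no substitute for it.

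The second gap concerns scales $r\gtrsim R_0$. Assumption $(\delta_0,R_0)$ gives no smallness of the coefficient oscillation on large cylinders, yet the sharp-function (or good-$\lambda$) estimate must be applied at every scale; normalizing $R_0=1$ does not remove this. The paper's fix is to prove the estimate first for $u$ supported in a thin slab in one $x''$-direction (so that on large cylinders the oscillation term is controlled because $Du$ lives on a small $\mu_\sigma$-fraction of the cylinder), and then to remove that restriction by a partition of unity in that variable, absorbing the resulting commutator errors using $\lambda\ge\lambda_0R_0^{-2}$ --- this is precisely where $\lambda_0$ is determined. Your proposal, which treats $\lambda$ only as a bookkeeping parameter, never confronts this step.
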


Our theorem is related to the recent work of Dong-Phan \cite{HT}, where weighted $L^p$ estimates were established for equations involving weights singular or degenerate at the standard co-dimension one boundary.  Extensive research exists on singular and degenerate problems of co-dimension one; see Dong-Jeon \cite{HS}, Dong-Kim \cite{HD4}, and the references therein. By employing polar coordinates for the first two variables, our approach closely follows the methodology of \cite{HT}, a framework that extends Krylov's technique \cite{K1} for second-order elliptic and parabolic equations in $\mathbb{R}^n$ with coefficients of vanishing mean oscillation (VMO). (See Dong-Kim \cite{HD1}, Kim-Krylov \cite{DN}, Krylov \cite{K3} for further related works.) 

A key difficulty in this adaptation arises from the additional singularity introduced by the angular derivatives. We overcome this via a barrier function argument and a detailed ODE analysis.

The rest of the paper is organized as follows. 
In Sect.\ref{se2}, we present several  preliminary results, including weighted embedding theorems tailored to our specific context and energy estimate in the whole space. The latter implies the existence and uniqueness of the weak solution  when $p=2$.
In Sect.\ref{se3}, we consider equations with constant coefficients. Here, we establish  appropriate  weighted H\"older estimates both in the interior and on the boundary. 
Finally, we present the proof of Theorem \ref{main theorem} and  derive  a local $L^p$ estimate in Sect.\ref{se4}.

In this paper, as usual, $C=C(\cdots)$ denotes a constant depending on the parameters in the parentheses. We adopt the summation convention whereby repeated indices are summed over.

\section{\texorpdfstring{Weighted Sobolev inequalities and $L^2$ theory}{Weighted Sobolev inequalities and L2 theory}}\label{se2}

First, we provide the definition of weak solutions. 

\begin{defn}\label{def-wek}	 
Let $\mathcal{D}\subset \mathbb{R}^n$, $n\ge 2$,  be an open set with Lipschitz boundary $\partial \mathcal{D}$. For any $f\in L^p(\mathcal{D},\rho^{\gamma p}\mathrm{d}\mu_\sigma)$  and $F\in  L^p(\mathcal{D},\rho^{\gamma p}\mathrm{d}\mu_\sigma)^n$, we say that $u\in \mathring{W}^{1,p}(\mathcal{D},\rho^{\gamma p}\mathrm{d}\mu_\sigma)$ is a weak solution of 
\[
-\sum_{i,j=1}^n\partial_i(\rho^{-2\alpha}(a^{ij}\partial_ju- F_i)	)+\lambda \rho^{-2\alpha}u=\sqrt{\lambda}\rho^{-2\alpha}f \quad \text{in } \mathcal{D}\setminus \Gamma
\]
and 
\[
u=0 \quad \mbox{on }\overline{\mathcal{D}}\cap \Gamma,
\]
if 
\begin{equation*}
\int_{\mathcal{D}}(a^{ij}\partial_iu \partial_j\varphi-F_i\partial_i \varphi) \mathrm{d}\mu+\lambda \int_{\mathcal{D}} u\varphi \mathrm{d}\mu=\sqrt{\lambda}\int_{\mathcal{D}}f\varphi \mathrm{d}\mu
\end{equation*} 
for all $ \varphi \in {W}^{1,p*}_0(\mathcal{D},\rho^{\gamma p*}\mathrm{d}\mu_{\sigma})$, where $p*=\frac{p}{p-1}$.
\end{defn}
If $\overline{\mathcal{D}}\cap \Gamma$ is empty,  the definition remains unchanged, and we therefore do not single out this case. 
Note that  $\mathring{W}^{1,p}(\Omega,\rho^{\gamma p}\mathrm{d}\mu_\sigma)={W}^{1,p}_0(\Omega,\rho^{\gamma p}\mathrm{d}\mu_\sigma)$. The above definition also makes sense when $\mathcal{D}=\mathbb{R}^n$.  Next, we prove Theorem \ref{main theorem} when $p=2$. 
\begin{lem}\label{whole integral} Let $\alpha\geq \frac{1}{2}$, $\lambda>0$, and let $a^{ij}$  be measurable functions defined on $\Omega$, such that (\ref{elliptic1}) is satisfied.
Then for each $ F\in L^2(\Omega,\mathrm{d}\mu)^n$ and $f \in L^2(\Omega,\mathrm{d}\mu)$, there exists a unique weak solution  $u\in \mathring{W}^{1,2}(\Omega,\mathrm{d}\mu)$ to (\ref{equation})-(\ref{boundary}). Moreover,
\begin{equation}\label{en es}
\|Du\|_{L^2(\Omega,\,\mathrm{d}\mu)}+\sqrt{\lambda}\|u\|_{L^2(\Omega,\,\mathrm{d}\mu)}\leq C\left(\| F\|_{L^2(\Omega,\,\mathrm{d}\mu)}+\|f\|_{L^2(\Omega,\,\mathrm{d}\mu)}\right),
\end{equation}
where $C=C(n,\kappa)$. 
\end{lem}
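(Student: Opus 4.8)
The plan is to obtain \eqref{en es} by the Lax--Milgram theorem applied to the bilinear form
\[
\mathcal{B}[u,\varphi]=\int_{\Omega}a^{ij}\partial_i u\,\partial_j\varphi\,\ud\mu+\lambda\int_{\Omega}u\varphi\,\ud\mu
\]
on the Hilbert space $H:=\mathring{W}^{1,2}(\Omega,\ud\mu)$. Boundedness of $\mathcal B$ on $H$ is immediate from the upper bound in \eqref{elliptic1} and Cauchy--Schwarz. Coercivity is the heart of the matter: the lower ellipticity bound gives
\[
\mathcal{B}[u,u]\ge \kappa\int_{\Omega}|Du|^2\,\ud\mu+\lambda\int_{\Omega}u^2\,\ud\mu,
\]
so $\mathcal B[u,u]\ge \min(\kappa,1)\|u\|_H^2$ when $\lambda\ge 1$; for general $\lambda>0$ one rescales (or simply notes $\min(\kappa,\lambda)$ suffices since $\lambda$ is fixed and positive). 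The linear functional $\varphi\mapsto \int_\Omega F_i\partial_i\varphi\,\ud\mu+\sqrt\lambda\int_\Omega f\varphi\,\ud\mu$ is bounded on $H$ by $\|F\|_{L^2(\ud\mu)}\|D\varphi\|_{L^2(\ud\mu)}+\sqrt\lambda\|f\|_{L^2(\ud\mu)}\|\varphi\|_{L^2(\ud\mu)}$. Lax--Milgram then yields a unique $u\in H$ with $\mathcal B[u,\varphi]=\int_\Omega F_i\partial_i\varphi\,\ud\mu+\sqrt\lambda\int_\Omega f\varphi\,\ud\mu$ for all $\varphi\in H$; since $p=p*=2$ here, $H$ is exactly the test space of Definition \ref{def-wek}, so $u$ is the desired weak solution.

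For the estimate \eqref{en es}, I would test the equation with $\varphi=u$ itself:
\[
\kappa\int_{\Omega}|Du|^2\,\ud\mu+\lambda\int_{\Omega}u^2\,\ud\mu\le \mathcal B[u,u]=\int_{\Omega}F_i\partial_i u\,\ud\mu+\sqrt\lambda\int_{\Omega}fu\,\ud\mu.
\]
Applying Cauchy--Schwarz and Young's inequality $ab\le \tfrac{\va}{2}a^2+\tfrac{1}{2\va}b^2$ to absorb $\int F_i\partial_i u\,\ud\mu$ into $\tfrac{\kappa}{2}\int|Du|^2\,\ud\mu$ and $\sqrt\lambda\int fu\,\ud\mu$ into $\tfrac{\lambda}{2}\int u^2\,\ud\mu$, one gets
\[
\tfrac{\kappa}{2}\int_{\Omega}|Du|^2\,\ud\mu+\tfrac{\lambda}{2}\int_{\Omega}u^2\,\ud\mu\le \tfrac{1}{2\kappa}\int_{\Omega}|F|^2\,\ud\mu+\tfrac12\int_{\Omega}f^2\,\ud\mu,
\]
which is \eqref{en es} with $C=C(n,\kappa)$ (in fact $C$ depends only on $\kappa$). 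Uniqueness follows from the same computation applied to the difference of two solutions with $F=f=0$.

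The main subtlety — and the only place where $\alpha\ge\tfrac12$ is genuinely used — is density/admissibility: one must know that $H=\mathring{W}^{1,2}(\Omega,\ud\mu)$, defined as the closure of $C_c^\infty(\Omega\setminus\Gamma)=C_c^\infty(\R^n\setminus\R^{n-2})$, is a well-behaved Hilbert space in which the above manipulations (in particular testing with $u$ and using that $C_c^\infty(\R^n\setminus\Gamma)$ is a dense class of test functions) are legitimate, and that elements of $H$ really do have zero trace on $\Gamma$ in the appropriate sense. This is exactly the content of R\'akosnik \cite{RJ}: because $\rho^{-2\alpha}$ with $2\alpha\ge1$ is not locally integrable near the codimension-two set $\Gamma$ when $2\alpha\ge 2$ and otherwise the weight is heavy enough that no capacity obstruction arises, smooth functions compactly supported away from $\Gamma$ are dense and carry zero boundary data. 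I would cite \cite{RJ} for this structural fact and for the Hilbert-space completeness of $(H,\|\cdot\|_{W^{1,2}(\ud\mu)})$, after which the Lax--Milgram and energy arguments above are entirely routine. No Poincar\'e-type inequality is needed here since the zeroth-order term $\lambda\rho^{-2\alpha}u$ supplies coercivity directly.
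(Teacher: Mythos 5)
Your proposal is correct and follows essentially the same route as the paper: Lax--Milgram on $\mathring{W}^{1,2}(\Omega,\mathrm{d}\mu)$ for existence and uniqueness, then testing with $u$ and absorbing via Cauchy--Schwarz/Young to get \eqref{en es} with $C$ independent of $\lambda$ (the paper's proof is exactly this, stated more tersely). Your additional remarks on coercivity from the zeroth-order term and on the density/trace facts from R\'akosn\'ik are consistent with how the paper treats these points.
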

\begin{proof}
By the Lax-Milgram theorem,  (\ref{en es}) ensures both the existence and the uniqueness of the solution.
By the definition of weak solutions,  
we obtain 
\begin{equation*}
 \int_{\Omega}(a^{ij}\partial_ju \partial_iu- F_i\partial_iu)\mathrm{d}\mu+\lambda\int_{\Omega}u^2\mathrm{d}\mu=\int_{\Omega}\sqrt{\lambda}fu\mathrm{d}\mu.
 \end{equation*}
By the uniform ellipticity and applying the  Cauchy's  inequality with small $\varepsilon>0$,
\begin{equation}
\int_{\Omega}|D u|^2\mathrm{d}\mu+\lambda\int_{\Omega}|u|^2\mathrm{d}\mu\leq C \left(\frac{1}{4\varepsilon}\int_{\Omega}(|F|^2+f^2)\mathrm{d}\mu+\varepsilon \int_{\Omega}(|D u|^2+\lambda|u|^2)\mathrm{d}\mu\right),
\end{equation}
Choosing $C\varepsilon<\frac{1}{2}$, we obtain (\ref{en es}). Notice that $\rho^{2\gamma}\mathrm{d}\mu_\sigma=\mathrm{d}\mu$. Thus, (\ref{en es}) is equivalent to \eqref{main es} when $p=2$.
\end{proof}
We recall a general Hardy type inequality.
\begin{lem}[Sect. 1.3.1 (iv) of Maz'ya \cite{M}]\label{wp}For $r>0$, $p>1$, $s\ne 2$ and $Q_r(x_0) \cap \Gamma \neq \varnothing$. Then, we have
\begin{equation*}
\int_{Q_r(x_0)}|v|^p\rho^{-s}\mathrm{d}x \leq C\int_{Q_r(x_0)}|Dv|^p \rho^{p-s}\mathrm{d}x 
\end{equation*}
for any $v\in \mathring{W}^{1,p}(Q_r(x_0),\rho^{p-s}\mathrm{d}x)$, where $C=C(n,p,s)$.
\end{lem}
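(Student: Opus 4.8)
The statement to prove is the Hardy-type inequality of Lemma~\ref{wp} (Maz'ya), for which a reference is given; so the task is really to reconstruct a self-contained proof. Here is how I would proceed.

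\medskip

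\textbf{Plan.} Since $\Gamma=\mathbb{R}^{n-2}$ and $\rho(x)=|x'|$ with $x=(x',x'')$, the weight $\rho$ depends only on the two-dimensional variable $x'$. The natural strategy is to freeze $x''$, work on the two-dimensional slice, and integrate the resulting estimate over $x''$ at the end. So first I would reduce, by Fubini, to proving the one-slice inequality
\[
\int_{B_r'(x_0')}|v(x')|^p\,|x'|^{-s}\,\mathrm{d}x' \le C\int_{B_r'(x_0')}|D_{x'}v(x')|^p\,|x'|^{p-s}\,\mathrm{d}x'
\]
for $v\in C_c^\infty(B_r'(x_0')\setminus\{0\})$ (the density of such functions in $\mathring W^{1,p}$ is exactly the definition used in the paper), and with $C=C(p,s)$ independent of $x_0'$ and $r$ once we observe the estimate is scaling invariant. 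Note the full gradient $|Dv|$ dominates $|D_{x'}v|$, so recovering the $x'$-gradient on the right-hand side suffices.

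\medskip

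\textbf{Two-dimensional Hardy inequality.} On the plane, pass to polar coordinates $x'=(t\cos\theta,t\sin\theta)$, $t=|x'|$. For fixed $\theta$, the function $g(t)=v(t\cos\theta,t\sin\theta)$ has compact support in $(0,\infty)$ (it vanishes near $t=0$ because $v$ vanishes near the origin, and near the outer radius since $v$ is compactly supported), and $|g'(t)|\le |D_{x'}v|$. The classical one-dimensional Hardy inequality states that for $\beta\ne 1$,
\[
\int_0^\infty |g(t)|^p t^{\beta-p}\,\mathrm{d}t \le \Big(\frac{p}{|\beta-1|}\Big)^p \int_0^\infty |g'(t)|^p t^{\beta}\,\mathrm{d}t,
\]
which one proves for smooth compactly supported $g$ by writing $g(t)=\mp\int$ of $g'$ from $0$ or $\infty$ depending on the sign of $\beta-1$, applying Hölder, and an integration by parts (the boundary terms vanish by the support condition and the exponent restriction $\beta\ne1$). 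Now integrate $|x'|^{-s}=t^{-s}$ against $t\,\mathrm{d}t\,\mathrm{d}\theta$: the radial exponent is $\beta-p=1-s$, i.e. $\beta=1-s+p$, so $\beta-1=p-s\ne 0$ precisely because we assumed $s\ne 2$ is \emph{not} the issue — wait, the hypothesis is $s\neq 2$; I need $\beta\neq 1$, i.e. $p-s\ne 0$? Let me recompute: with weight $t^{-s}\cdot t\,\mathrm dt=t^{1-s}\,\mathrm dt$ on the left and $t^{p-s}\cdot t\,\mathrm dt=t^{1+p-s}\,\mathrm dt$ on the right, we want $\int |g|^p t^{\beta-p}=\int|g|^p t^{1-s}$ so $\beta=1-s+p$ and the excluded value $\beta=1$ corresponds to $s=p$. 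Hmm — but the paper says $s\ne 2$. The resolution: the hypothesis $s\neq 2$ is what makes the \emph{full} $n$-dimensional integration-by-parts work directly without slicing, since in $\mathbb{R}^n$ distance to a codimension-two set behaves like the radial variable of $\mathbb{R}^2$ and the critical exponent for $\int\rho^{-s}$ is $s=2$ (the measure of $B_r'$ in the plane). So I would \emph{not} slice, but instead argue directly in $\mathbb{R}^n$: integrate by parts the identity $\mathrm{div}_{x'}\big(x'|x'|^{-s}\big)=(2-s)|x'|^{-s}$ against $|v|^p$, giving
\[
(2-s)\int |v|^p|x'|^{-s}\,\mathrm{d}x = -\int p|v|^{p-2}v\, (D_{x'}v\cdot x')\,|x'|^{-s}\,\mathrm{d}x,
\]
then bound $|D_{x'}v\cdot x'|\le |D_{x'}v|\,|x'|$ and apply Hölder with exponents $p,p^*$ to the right side, absorbing $\big(\int|v|^p|x'|^{-s}\big)^{1/p^*}$. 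The restriction $s\neq 2$ is exactly what lets us divide by $(2-s)$.

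\medskip

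\textbf{Main obstacle and loose ends.} The genuine work is justifying the integration by parts and the absorption. For $v\in C_c^\infty(Q_r(x_0')\setminus\Gamma)$ both the divergence identity and vanishing of boundary terms are clean (the support avoids $\{x'=0\}$ where $|x'|^{-s}$ is singular, and is compact), and the right-hand integral is finite, so the absorption step is legitimate; then one passes to general $v\in\mathring W^{1,p}(Q_r(x_0),\rho^{p-s}\mathrm dx)$ by the defining density of $C_c^\infty(Q_r(x_0)\setminus\Gamma)$ together with the fact that the left-hand side is then automatically finite via the inequality itself applied along the approximating sequence (a standard Fatou plus Cauchy-sequence argument). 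The only subtlety to watch is that $|v|^{p-2}v$ is only $C^1$ when $p\ge 2$; for $1<p<2$ one first proves the inequality with $|v|$ replaced by $(\varepsilon^2+|v|^2)^{1/2}$, which is smooth, and lets $\varepsilon\to0$ by dominated convergence. I expect this regularization-and-absorption bookkeeping, rather than any conceptual difficulty, to be the main thing to get right; the constant produced is $C=\big(p/|2-s|\big)^p$, which has the claimed dependence $C=C(n,p,s)$ (indeed independent of $n$, and of $r,x_0$ by homogeneity).
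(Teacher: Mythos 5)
Your final argument is correct, and it is worth noting that the paper offers no proof of this lemma at all --- it is quoted directly from Maz'ya's book --- so there is nothing in the text to compare against; your reconstruction via the divergence identity $\mathrm{div}_{x'}(x'|x'|^{-s})=(2-s)|x'|^{-s}$, Hölder with the split $\rho^{1-s}=\rho^{-s(p-1)/p}\rho^{(p-s)/p}$, absorption (legitimate because the left-hand integral is finite for $v\in C_c^\infty(Q_r(x_0)\setminus\Gamma)$), and density is exactly the standard route and yields the constant $(p/|2-s|)^p$ with the right dependence. The one blemish is the detour through the sliced one-dimensional Hardy inequality: the form you quote, with critical exponent $\beta\ne 1$, is misremembered --- for $\int_0^\infty|g|^p t^{\gamma}\,\mathrm{d}t\le (p/|\gamma+1|)^p\int_0^\infty|g'|^pt^{\gamma+p}\,\mathrm{d}t$ the excluded value is $\gamma=-1$, i.e.\ $\beta=p-1$ in your normalization, and with $\gamma=1-s$ (coming from $t^{-s}\cdot t\,\mathrm{d}t$) this is precisely $s=2$. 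So the slicing route would in fact have worked and would have produced the same constant and the same restriction $s\ne2$; your conclusion that the hypothesis $s\ne 2$ "only" makes sense for the unsliced argument is mistaken, though harmless since you abandon that route. Two further small remarks: for $1<p<2$ the regularization of $|v|^p$ is not strictly necessary, since $t\mapsto|t|^p$ is already $C^1$ for $p>1$; and the hypothesis $Q_r(x_0)\cap\Gamma\ne\varnothing$ plays no role in the estimate itself --- it only signals where the lemma is applied.
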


In the context of weighted spaces, by Corollary 2 in Sect. 2.1.7 of \cite{M},  we also have the Sobolev embedding. 
\begin{lem}\label{WE}
Let $\alpha \in [\frac{1}{2}, \infty)$, $\sigma\in(0,\frac{1}{2})$, $1\leq p^*<n $, and $p^* \leq p$ be given such that they  satisfy:
\begin{equation}\begin{aligned}\label{domain}
p &\leq \frac{np^*}{n-p^*}\\
\frac{n+2\alpha -2-2\sigma}{p^*} &\leq \frac{n+2\alpha -2-2\sigma}{p} +1 \\
-2\alpha+\sigma&+1+\frac{2\alpha-2\sigma}{p}>0.\end{aligned}\end{equation} 
Then for $v \in \mathring{W}^{1,p^*}(Q_2, \rho^{\gamma p^*}\mathrm{d}\mu_\sigma)$,
\[
\| v\|_{L^{p}(Q_2, \,\rho^{\gamma p} \mathrm{d}\mu_\sigma)} \leq C \|Dv\|_{L^{p^*}(Q_2, \,\rho^{\gamma p^*} \mathrm{d}\mu_\sigma)},
\]
where $C = C(n,\alpha,p,p^*)$. 
\end{lem}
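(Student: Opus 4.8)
The plan is to reduce the weighted Sobolev inequality on the cube $Q_2$ to the unweighted Sobolev embedding in $\mathbb{R}^n$ by absorbing the weight into the measure $dx$. Writing out the measure, $\rho^{\gamma p}\,\mathrm{d}\mu_\sigma = \rho^{\gamma p + 2\alpha - 2\sigma - 2}\,\mathrm{d}x$, and similarly with $p$ replaced by $p^*$. So the claimed estimate reads
\[
\Big(\int_{Q_2}|v|^{p}\rho^{\gamma p + 2\alpha - 2\sigma - 2}\,\mathrm{d}x\Big)^{1/p}
\le C\Big(\int_{Q_2}|Dv|^{p^*}\rho^{\gamma p^* + 2\alpha - 2\sigma - 2}\,\mathrm{d}x\Big)^{1/p^*}.
\]
First I would recall the general weighted Sobolev embedding from Maz'ya \cite{M}, Sect. 2.1.7, Corollary 2, which gives, for a bounded Lipschitz (or suitably regular) domain $D$, weights $\rho^{a}$ and $\rho^{b}$ on $D$, and exponents $p^* \le p$, an inequality of the form $\|v\rho^{a/p}\|_{L^p(D)} \le C\|Dv\,\rho^{b/p^*}\|_{L^{p^*}(D)}$ provided (i) $p \le \frac{np^*}{n-p^*}$ (the scaling/Sobolev-exponent condition), (ii) a compatibility condition relating $a$, $b$, $p$, $p^*$ of the form $\frac{n+a}{p} + 1 \ge \frac{n+b}{p^*}$ or its homogeneous analogue, and (iii) local integrability of the weights near $\Gamma$ so that $C_c^\infty(Q_2\setminus\Gamma)$ functions and the zero-trace space make sense. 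The task is then simply to check that the three displayed hypotheses \eqref{domain} are exactly these conditions after substituting $a = \gamma p + 2\alpha - 2\sigma - 2$ and $b = \gamma p^* + 2\alpha - 2\sigma - 2$ and using $\gamma = -2\alpha + 1 + \sigma$.

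Concretely, I would proceed as follows. The first line of \eqref{domain}, $p \le \frac{np^*}{n-p^*}$, is literally Maz'ya's condition (i) and needs no manipulation. For the second line, substitute $a$ and $b$: since $a - b = (p - p^*)\gamma$, one computes $\frac{n+a}{p} - \frac{n+b}{p^*}$ and, after plugging $\gamma = -2\alpha+1+\sigma$, this rearranges to the stated inequality $\frac{n+2\alpha-2-2\sigma}{p^*} \le \frac{n+2\alpha-2-2\sigma}{p} + 1$; I would carry out this short algebraic identity explicitly. The third line, $-2\alpha + \sigma + 1 + \frac{2\alpha - 2\sigma}{p} > 0$, I would show is equivalent to $\gamma p + 2\alpha - 2\sigma - 2 > -2$ after dividing by $p$, i.e. to $\int_{Q_2}\rho^{\gamma p + 2\alpha - 2\sigma - 2}\,\mathrm{d}x < \infty$ using that $\rho$ is the distance to the codimension-two set $\Gamma$, so that $\int_{Q_2}\rho^{c}\,\mathrm{d}x < \infty$ iff $c > -2$; this is exactly the local integrability of the $L^p$-weight that makes the target space well-defined and the embedding hold. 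One should also note in passing that the $L^{p^*}$-weight on the gradient side is then automatically locally integrable because $b > a$ is not needed — rather one uses that $p^* \le p$ together with condition (ii) forces $b \ge -2$ as well, or one simply invokes Lemma \ref{wp} (the Hardy inequality) to handle the gradient weight. Finally I would record that the constant depends only on $n, \alpha, p, p^*$ (the dependence on $\sigma$ being absorbed since $\sigma$ ranges in the fixed interval $(0,\tfrac12)$ and enters only through the exponents).

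The main obstacle is matching conventions: Maz'ya's corollary is stated for a general measure or general weight and one must verify that the precise form of his compatibility condition (which is typically phrased in terms of the "dimension defect" of the weighted measure, or via a capacity/isocapacitary condition) specializes, for power weights of the distance to a codimension-two plane, to precisely the three inequalities in \eqref{domain}. In particular one must be careful that the relevant effective dimension is $n + a$ for the power weight $\rho^a$ near a codimension-two set — i.e. that near $\Gamma$ the weighted measure $\rho^a\,\mathrm{d}x$ behaves like an $(n+a)$-dimensional measure in the Ahlfors sense when $a > -2$ — and that the Sobolev exponent condition is unchanged because the weight is a power of a smooth (away from $\Gamma$) function and hence the local scaling is the Euclidean one. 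Once these identifications are in place the proof is a direct citation plus the two elementary algebraic verifications above, and the hypotheses \eqref{domain} have been engineered to be exactly what is needed, so no genuinely new estimate is required.
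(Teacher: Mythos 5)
Your approach is essentially the paper's: its proof is a direct citation of Corollary 2 in Sect.~2.1.7 of Maz'ya \cite{M} together with the algebraic matching of exponents, exactly as you propose. The one detail worth adding is that the paper applies Maz'ya's corollary in the case where the second condition of \eqref{domain} holds with \emph{equality}, and reduces the general (strict-inequality) case to it by H\"older's inequality on the bounded set $Q_2$ — a one-line step your sketch glosses over by assuming the balance condition in Maz'ya is already stated as an inequality.
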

\begin{proof} If the second inequality in \eqref{domain} holds with equality, then the lemma follows from Corollary 2 in Section 2.1.7 of \cite{M}. The general case then follows by H\"older's inequality.
\end{proof}

\section{ Equations with constant coefficients}\label{se3}

Let us start with local weak solutions of the homogeneous equation with constant coefficients
\begin{equation}\label{ho equation}
-\sum^n_{i,j=1}\partial_i(\rho^{-2\alpha}\overline{a^{ij}}\partial_j u)+\lambda  \rho^{-2\alpha}u =0 
\end{equation}
 in $Q_R(x_0)\backslash\Gamma$ with the partial boundary condition: 
\begin{equation*}
u=0 \quad \mbox{on } Q_R(x_0)\cap\Gamma, \quad \mbox{if }Q_R(x_0)\cap\Gamma\neq \varnothing,
\end{equation*}
where $(\overline{a^{ij}})$ is a positive definite matrix and satisfies (\ref{elliptic1}).

We shall use cylindrical coordinates
\begin{equation*}
\left\{
\begin{aligned}
x^1&=\rho \cos\theta,\\ x^2&=\rho \sin\theta,\quad \theta \in [0,2\pi],\,\rho\in[0,\infty), 
\end{aligned}\right.\end{equation*}
with $x''$ unchanged. Under this transform,  we have 
\[
\mathrm{d}x=\rho \,\mathrm{d}\rho \,\mathrm{d}\theta \,\mathrm{d}{x''},
\]
\[
D u=(\partial_\rho u,\frac{1}{\rho}\partial_\theta u,D_{x''} u )\cdot T,
\]
where  $D_{x''} =(\partial_{3},\cdots,\partial_{n})$, 
\begin{equation} \label{eq:T}
T = \begin{pmatrix} 
\cos \theta & \sin \theta &0\\ 
-\sin \theta  &  \cos \theta &0\\
0&0&I_{n-2}
\end{pmatrix}, 
\end{equation}  and $I_{n-2}$ is the identity matrix of dimensions $n-2$. 
Set
\[
\widetilde{D}=(\widetilde{D}_1,\widetilde{D}_2,\cdots,\widetilde{D}_n)=(\partial_\rho,\frac{1}{\rho}\partial_\theta, D_{x''}).
\]
Under the transform, the equation (\ref{ho equation}) is equivalent to
\begin{equation}\label{equi equation}
-\sum_{i,j=1}^n\frac{1}{\rho}\widetilde{D}_i(\rho^{1-2\alpha}b^{ij}\widetilde{D}_ju)+\lambda  \rho^{-2\alpha}u=0, \quad\end{equation}
where 
\[
B:=(b_{ij})_{i,j=1}^n= T\overline{A} T^\prime , \quad \overline{A}=(\overline{a^{ij}})_{i,j=1}^n
\]
and $T^\prime$ is the transpose of $T$.  Since $T$ is orthogonal, 
$B$ satisfies the same ellipticity condition as   $\overline{A}$.
 We shall use the notations that 
\begin{align*}
D_{\theta x''}^ku&=\sum_{\beta_2+\dots+\beta_n=k}\partial_\theta^{\beta_2}\partial_{3}^{\beta_3}\cdots\partial_{n}^{\beta_n}u \\ D_{ x''}^{k}u&=\sum_{\beta_3+\dots+\beta_n=k}\partial_{3}^{\beta_3}\cdots\partial_{n}^{\beta_n}u,
\end{align*}
where $k\ge0$ and $\beta_i\ge0$, $i=2,\dots,n$. 

\begin{lem}\label{lem3.1}
Let $u\in \mathring{W}^{l,2}(Q_R(x_0),\mathrm{d}\mu)$ be a  weak solution of (\ref{ho equation}) and $0<r<R \leq 1$. For any nonnegative integer $k\leq l-1$, we have
\begin{equation}\label{3.1}\begin{aligned}
&\|\widetilde{D} u\|_{L^2(Q_r(x_0), \mathrm{d}\mu)}+\sqrt{\lambda} \|u\|_{L^2(Q_r(x_0), \mathrm{d}\mu)} \leq C(R-r)^{-1} \|u\|_{L^2(Q_{R}(x_0), \mathrm{d}\mu)},\\[2mm] 
&\|\widetilde{D}D_{\theta x''}^{k}u\|_{L^2(Q_r(x_0), \mathrm{d}\mu)}\leq C(R-r)^{-(k+1)}\|u\|_{L^2(Q_{R}(x_0), \mathrm{d}\mu)},
\end{aligned}\end{equation}
where $C=C(n,\kappa).$
\end{lem}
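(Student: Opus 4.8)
The plan is to prove both estimates together by a Caccioppoli-type energy argument, differentiating the equation in the tangential directions $\theta, x^3, \dots, x^n$. The crucial point is that both the equation (\ref{equi equation}) and the measure $\mathrm{d}\mu = \rho^{-2\alpha}\,\mathrm{d}x = \rho^{1-2\alpha}\,\mathrm{d}\rho\,\mathrm{d}\theta\,\mathrm{d}x''$ have coefficients independent of $\theta$ and $x''$ (after passing to cylindrical coordinates and using that $B = T\overline{A}T'$ with $T$ depending on $\theta$, so $B$ depends on $\theta$; one must check this carefully, but the $\rho$-weight and the vector fields $\widetilde D_i$ have the right homogeneity so that the standard scheme goes through). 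So the operator essentially commutes with $\partial_\theta$ and $\partial_{x''}$ up to controllable error terms.

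First I would establish the $k=0$ estimate. Fix a cutoff $\eta \in C_c^\infty(Q_R(x_0))$ with $\eta \equiv 1$ on $Q_r(x_0)$, $0 \le \eta \le 1$, and $|D\eta| \le C(R-r)^{-1}$; one must be a bit careful that $\eta$ can be taken to vanish near $\Gamma$ if $Q_R(x_0) \cap \Gamma \ne \varnothing$, or one uses the zero boundary condition directly — in either case $\eta^2 u$ is an admissible test function in $\mathring W^{1,2}(\cdot,\mathrm{d}\mu)$. Testing (\ref{ho equation}) against $\eta^2 u$ and using ellipticity (\ref{elliptic1}) together with Cauchy's inequality with small $\varepsilon$ to absorb the cross term $\int \rho^{-2\alpha}\overline{a^{ij}}\,\partial_j u\,(2\eta\partial_i\eta)u$, we get $\int_\Omega |Du|^2\eta^2\,\mathrm{d}\mu + \lambda\int_\Omega u^2\eta^2\,\mathrm{d}\mu \le C\int_\Omega |D\eta|^2 u^2\,\mathrm{d}\mu$. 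Since $|Du|^2 = |\widetilde D u|^2$ (the transform $T$ is orthogonal, as noted in the excerpt), this gives the first line of (\ref{3.1}).

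Next, for the higher-order estimate, I would argue by induction on $k$. The key observation is that if $u$ solves (\ref{ho equation}) then so does $\partial_\theta u$ and $\partial_{x^i} u$ ($i \ge 3$) — up to the $\theta$-dependence of $B$. Writing $B(\theta) = T(\theta)\overline A T(\theta)'$, one computes $\partial_\theta B$ explicitly from (\ref{eq:T}): it is a bounded matrix (entries bounded by $C\kappa^{-1}$), so differentiating (\ref{equi equation}) in $\theta$ produces the same equation for $\partial_\theta u$ plus a first-order-in-$\widetilde D u$ source term whose coefficients are controlled by $\kappa$. Differentiation in $x^3,\dots,x^n$ produces no such error term at all since $B$ is independent of $x''$. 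Applying the $k=0$ Caccioppoli estimate to $v = D_{\theta x''}^k u$ on nested cylinders $Q_{r_j}(x_0)$ with $r = r_0 < r_1 < \dots < r_k < r_{k+1} = R$ and $r_{j+1}-r_j \sim (R-r)/(k+1)$, then iterating (using that the source terms from lower-order $\theta$-derivatives are absorbed into the already-controlled $\|\widetilde D D_{\theta x''}^{j}u\|_{L^2(Q_{r_j}, \mathrm{d}\mu)}$ for $j < k$), yields $\|\widetilde D D_{\theta x''}^k u\|_{L^2(Q_r(x_0),\mathrm{d}\mu)} \le C(R-r)^{-(k+1)}\|u\|_{L^2(Q_R(x_0),\mathrm{d}\mu)}$ with $C = C(n,\kappa)$ (absorbing the combinatorial factor $(k+1)^{k+1}$ into $C$, which is legitimate since $k \le l-1$ is fixed).

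The main obstacle I anticipate is bookkeeping the error terms generated by the $\theta$-dependence of $B$: when one differentiates (\ref{equi equation}) $k$ times in $\theta$, the Leibniz expansion produces terms $\partial_\theta^{m} B \cdot \widetilde D \partial_\theta^{k-m} u$ for $1 \le m \le k$, and one must verify that these are all controlled — in the energy estimate — by the lower-order quantities $\|\widetilde D D_{\theta x''}^{j} u\|_{L^2}$ with $j \le k$, so that the induction closes. This works because $\partial_\theta^m B$ is bounded uniformly (all derivatives of $T$ are), and each such error term carries a $\widetilde D$ acting on at most $k$ tangential derivatives of $u$ — i.e. it is lower order in the induction parameter — while the weight $\rho^{1-2\alpha}$ and the vector field $\widetilde D_2 = \rho^{-1}\partial_\theta$ enter identically on both sides, so no extra $\rho$-powers appear. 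A secondary technical point is justifying that $D_{\theta x''}^k u$ and its test-function modifications lie in the right spaces and vanish appropriately on $\Gamma$; this follows from $u \in \mathring W^{l,2}(Q_R(x_0),\mathrm{d}\mu)$ together with the fact that tangential derivatives preserve the zero trace on $\Gamma$, and one may work first with difference quotients in $\theta$ and $x''$ to make everything rigorous before passing to the limit.
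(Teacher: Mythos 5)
Your proposal is correct and follows essentially the same route as the paper: a Caccioppoli estimate with a cutoff for $k=0$, then tangential differentiation (via difference quotients in $\theta$ and $x''$) combined with iteration on nested cylinders, with the $\theta$-dependence of $B=T\overline{A}T'$ producing exactly the lower-order error terms you describe (which is why the paper's bound for $\widetilde{D}\partial_\theta u$ has $\|\widetilde{D}u\|_{L^2}$ rather than $\|\partial_\theta u\|_{L^2}$ on the right-hand side). No substantive differences.
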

\begin{proof}
For $0<r<R\leq1$, let $\xi\in C^\infty_0(Q_R(x_0)\backslash\Gamma)$ be a cutoff function, $\xi\equiv1$ in $Q_r(x_0)$ and 
$|D\xi| \leq \frac{C}{R-r}$.

By the definition of weak solutions, taking $u \xi^2$ as the test function, we obtain 
\begin{equation*}
\int_{Q_R(x_0)} \overline{a^{ij}}\xi^2 \partial_ju \partial_iu \mathrm{d}	\mu+\lambda \int_{Q_R(x_0)}  \xi^2 u^2 \mathrm{d}	\mu=-2\int_{Q_R(x_0)} \xi \overline{a^{ij}}u \partial_ju \partial_i\xi \mathrm{d}\mu.
\end{equation*}
By the uniform ellipticity condition \eqref{elliptic1} and the Cauchy's inequality, we obtain
\begin{equation}\label{energy estimate}
\int_{Q_r(x_0)} |Du|^2 \mathrm{d}	\mu+\lambda\int_{Q_r(x_0)}|u|^2\mathrm{d}\mu
\leq \frac{C}{|R-r|^2}\int_{Q_R(x_0)}|u|^2\mathrm{d}\mu.
\end{equation}
Since $|Du|=|\widetilde{D}u|$, we have
\begin{equation}\label{eq}
\int_{Q_r(x_0)} |\widetilde{D}u|^2 \mathrm{d}	\mu+\lambda\int_{Q_r(x_0)}|u|^2\mathrm{d}\mu
\leq \frac{C}{|R-r|^2}\int_{Q_R(x_0)}|u|^2\mathrm{d}\mu.
\end{equation}
Given that  $B$ is uniformly elliptic and smooth  with respect to $\theta$, by using the difference quotient method in the $\theta$ and $x''$ variables, we obtain 
\begin{equation*}
    \int_{Q_r(x_0)} |\widetilde{D} D_{x''} u|^2 \mathrm{d}	\mu+\lambda\int_{Q_r(x_0)}| D_{x''} u|^2\mathrm{d}\mu
\leq \frac{C}{|R-r|^2}\int_{Q_R(x_0)}| D_{x''} u|^2\mathrm{d}\mu,
\end{equation*} 
and\begin{equation*}
\int_{Q_r(x_0)} |\widetilde{D}\partial_\theta u|^2 \mathrm{d}	\mu+\lambda \int_{Q_r(x_0)} |\partial_\theta u|^2\mathrm{d}\mu
\leq \frac{C}{|R-r|^2}\int_{Q_R(x_0)}|\widetilde{D} u|^2\mathrm{d}\mu.
\end{equation*}
Since $b^{ij}$ are smooth  in $\theta$, we can iterate the argument above to obtain 
\begin{equation*}
\int_{Q_r(x_0)} |\widetilde{D} D_{x''}^k u|^2 \mathrm{d}	\mu+\lambda\int_{Q_r(x_0)}| D_{x''}^k u|^2\mathrm{d}\mu
\leq \frac{C}{|R-r|^2}\int_{Q_R(x_0)}| D_{x''}^k u|^2\mathrm{d}\mu.
\end{equation*}
Combined with (\ref{eq}), we can get (\ref{3.1}).
\end{proof}

Next, we present a scaling argument that will be used in the subsequent text. Let $x\in \Omega$, $r>0$ and consider the following scaling relations:  \begin{equation}\label{sc0}x=r\overline{x},\quad \overline{u} (\overline{x})=u(x), \quad \overline{\lambda}=r^{2}\lambda, \quad \mathrm{d}\overline{\mu}_\sigma=\rho^{2\alpha-2-2\sigma}\mathrm{d}\overline{x}.\end{equation}
Thus, we have
 $\rho(x)=r \rho(\overline{x})$, $Du(x)=r^{-1}D\overline{u}(\overline{x})$. We observe that
\begin{equation*}\begin{aligned}\label{}
&-\partial_i(\rho^{-2\alpha}(\overline{x})\overline{a^{ij}}\partial_j \overline{u}(\overline{x}))+\overline{\lambda} \rho^{-2\alpha}(\overline{x})\overline{u}(\overline{x}) \\ =&-r^{-2\alpha+2}\partial_i((\rho(x))^{-2\alpha}\overline{a^{ij}}\partial_j u(x))+r^{-2\alpha+2} \lambda\rho^{-2\alpha}(x)u(x).
\end{aligned}\end{equation*}
For $x_0\in \mathbb{R}^n$, when (\ref{ho equation}) holds in $Q_{r}(x_0)\backslash\Gamma $, we have
\begin{equation} \label{eq:rescaled}
-\partial_i(\rho^{-2\alpha}\overline{a^{ij}}\partial_j \overline{u}(\overline{x}))+\overline{\lambda} \rho^{-2\alpha}\overline{u}(\overline{x})= 0 \quad \mbox{in } \ Q_{1}(\overline{x_0})\backslash\Gamma.
\end{equation}
Recall that $\gamma=-2\alpha+1+\sigma$. We can derive the following equalities:
\begin{equation}\label{sc3}
\begin{aligned}
r^{-2\alpha+1} [\rho^{\gamma}\overline{u}]_{C^{\sigma}(Q_{1}(\overline{x_0}))}&=[\rho^{\gamma}u]_{C^\sigma(Q_{r}(x_0))},
\\ r^{-2\alpha}[\rho^{\gamma}D\overline{u}]_{C^\sigma(Q_{1}(\overline{x_0}))}&=[\rho^{\gamma}Du]_{C^\sigma(Q_{r}(x_0))}.
\end{aligned}
\end{equation}
 By \eqref{sc0}, we obtain $\mathrm{d}{\mu}_\sigma=r^{2\alpha-2-2\sigma-n}\mathrm{d}\overline{\mu}_\sigma$. Moreover, when $Q_2(\overline{x_0})\bigcap \Gamma=\varnothing$, we have $ \frac{1}{2}\rho({x_0})\leq  \rho({x})\leq\frac{3}{2}\rho({x_0})$ for $x\in Q_{r}(x_0)$. Furthermore, $C\rho^{2\alpha-2-2\sigma}(x_0)r^{n}\le \int_{Q_r({x_0})}\mathrm{d}{\mu}_\sigma\leq C\rho^{2\alpha-2-2\sigma}(x_0)r^{n}$, where $C$ depends only on $n$. Thus, we obtain

\begin{equation}\begin{aligned}\label{sc4}
&\rho^{2\alpha-2\sigma-2}(x_0)\dashint_{Q_{r}(x_0)}|\rho^{\gamma}u|^2\mathrm{d}\mu_\sigma \\ \leq & C r^{-2\alpha}\int_{Q_1(\overline{x_0})}|\rho^{\gamma}\overline{u}|^2\mathrm{d}\overline{\mu}_\sigma  \leq C\rho^{2\alpha-2\sigma-2}(x_0) \dashint_{Q_{r}(x_0)}|\rho^{\gamma}u|^2\mathrm{d}\mu_\sigma 
\end{aligned}	
\end{equation}and
\begin{equation}\begin{aligned}\label{sc5} &\rho^{2\alpha-2\sigma-2}(x_0)\dashint_{Q_{r}(x_0)}|\rho^{\gamma}Du|^2\mathrm{d}\mu_\sigma \\ \leq& C r ^{-2\alpha-2}	\int_{Q_1(\overline{x_0})}|\rho^{\gamma}D\overline{u}|^2\mathrm{d}\overline{\mu}_\sigma \leq C\rho^{2\alpha-2\sigma-2}(x_0)\dashint_{Q_{r}(x_0)}|\rho^{\gamma}Du|^2\mathrm{d}\mu_\sigma.	
\end{aligned}	
\end{equation}

\begin{prop}\label{prop3.2} Let $x_0\in \Omega$, $r>0$, and suppose that $Q_r(x_0)\bigcap\Gamma=\varnothing$. If $u\in W^{1,2}(Q_r(x_0),\mathrm{d}\mu)$ is a weak solution to (\ref{ho equation}) in $Q_r(x_0)$, then we have
\begin{equation}\begin{aligned}\label{in}
[\rho^{\gamma}u]_{C^\sigma(Q_{r/2}(x_0))}&\leq Cr^{-\sigma}\left(\dashint_{Q_{2r/3}(x_0)}|\rho^{\gamma}u|^2\mathrm{d}\mu_\sigma \right)^{1/2},\\
[\rho^{\gamma}D u]_{C^\sigma(Q_{r/2}(x_0))}&\leq Cr^{-\sigma}\left(\dashint_{Q_{2r/3}(x_0)}(|\rho^{\gamma}Du|^2+\lambda|\rho^{\gamma}u|^2)\mathrm{d}\mu_\sigma \right)^{1/2},	\end{aligned}\end{equation}
where  $C=C(n,\alpha, \kappa,\sigma)$.
\end{prop}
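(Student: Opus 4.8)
The plan is to prove the interior Schauder-type estimates in Proposition~\ref{prop3.2} by a combination of the energy estimates of Lemma~\ref{lem3.1}, the Sobolev embedding of Lemma~\ref{WE} (or its corollary), and a Sobolev-type embedding into H\"older spaces, all carried out on the normalized cube $Q_1(\overline{x_0})$ and then transferred back to $Q_r(x_0)$ via the scaling relations \eqref{sc0}--\eqref{sc5}. Because $Q_r(x_0)\cap\Gamma=\varnothing$, on $Q_1(\overline{x_0})$ the weight $\rho$ is comparable to the constant $\rho(\overline{x_0})$, so the weighted norms $\rho^\gamma\,\mathrm{d}\overline{\mu}_\sigma$ are equivalent (up to powers of $\rho(\overline{x_0})$ that cancel in \eqref{sc4}--\eqref{sc5}) to ordinary unweighted norms on that cube; this reduces the problem to a standard constant-coefficient interior regularity statement for the rescaled equation \eqref{eq:rescaled}, but one must keep track of the factor $\overline\lambda=r^2\lambda$.

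First I would establish, from Lemma~\ref{lem3.1}, that a weak solution on $Q_{2r/3}(x_0)$ is in fact smooth in the $\theta$ and $x''$ directions with quantitative bounds: iterating the second inequality of \eqref{3.1} gives, for every $k$, control of $\|\widetilde D D^k_{\theta x''}u\|_{L^2}$ on slightly smaller cubes by $\|u\|_{L^2(Q_{2r/3}(x_0),\mathrm{d}\mu)}$. To upgrade this to the missing $\rho$-direction I would use the equation \eqref{equi equation} itself: expanding it and solving for $\partial_\rho(\rho^{1-2\alpha}b^{11}\partial_\rho u)$ (using $b^{11}\ge\kappa>0$) expresses the top-order $\rho$-derivative in terms of $\theta$- and $x''$-derivatives, lower-order terms, and $\lambda u$, so by bootstrapping one obtains $L^2$ bounds (with $\mathrm{d}\mu$ weight) for all derivatives $\widetilde D^m u$ on a cube of radius, say, $r/2$, with constants of the form $C(n,\kappa,m)(2r/3-r/2)^{-m}$ times $(1+\sqrt\lambda\,r)^{?}$ and the right-hand side $\dashint_{Q_{2r/3}}(|u|^2+\lambda|u|^2)\,\mathrm{d}\mu_\sigma$ after absorbing powers of $\rho(\overline{x_0})$. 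Choosing $m$ large enough (depending on $n$) and invoking a weighted Morrey/Sobolev embedding — here the hypotheses \eqref{domain} of Lemma~\ref{WE}, or Corollary~\ref{WEF} in the commented-out block, combined with the fact that away from $\Gamma$ the weight is harmless — converts these $L^2$ derivative bounds into the H\"older seminorm bounds $[\rho^\gamma u]_{C^\sigma}$ and $[\rho^\gamma D u]_{C^\sigma}$ on $Q_{1/2}(\overline{x_0})$; note $D u$ and $\widetilde D u$ have comparable pointwise norms since $T$ in \eqref{eq:T} is orthogonal, and the extra $\rho^\gamma$ weight only contributes smooth bounded factors on this cube.

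The final step is bookkeeping: apply the scaling \eqref{sc0} with $r$ replaced by $2r/3$ (or absorb the constant), use \eqref{sc3} to pass the $C^\sigma$-seminorms from $\overline x$ back to $x$, and use \eqref{sc4}--\eqref{sc5} to rewrite $r^{-2\alpha}\int_{Q_1(\overline{x_0})}|\rho^\gamma\overline u|^2\,\mathrm{d}\overline\mu_\sigma$ as $C\rho^{2\alpha-2\sigma-2}(x_0)\dashint_{Q_{2r/3}(x_0)}|\rho^\gamma u|^2\,\mathrm{d}\mu_\sigma$; the powers of $r$ work out precisely to the claimed $r^{-\sigma}$ prefactor because $\gamma=-2\alpha+1+\sigma$ was chosen for exactly this balance. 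For the gradient estimate one applies the same machinery to the rescaled equation but keeps the $\overline\lambda|\overline u|^2$ term, which upon unscaling becomes the $\lambda|\rho^\gamma u|^2$ term on the right-hand side of the second line of \eqref{in}; the condition $\lambda\ge\lambda_0R_0^{-2}$ is not needed here since for the interior homogeneous estimate $\lambda>0$ arbitrary is fine and the $\sqrt\lambda\|u\|$ terms are simply retained.

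The main obstacle I anticipate is the honest treatment of the $\rho$-direction regularity near — but not at — $\Gamma$: unlike the $\theta$ and $x''$ directions where $b^{ij}$ are smooth and the difference-quotient method of Lemma~\ref{lem3.1} applies verbatim, differentiating \eqref{equi equation} in $\rho$ introduces the singular factors $\rho^{1-2\alpha}$ and $\rho^{-2\alpha}$ whose derivatives produce negative powers of $\rho$; these are controlled precisely because $Q_r(x_0)\cap\Gamma=\varnothing$ forces $\rho\asymp\rho(\overline{x_0})\asymp 1$ on the normalized cube, so one must be careful to invoke this comparability \emph{before} differentiating rather than carrying weights through. A secondary technical point is ensuring the constants in the iterated energy estimates depend only on $n,\alpha,\kappa,\sigma$ and not on $\lambda$ or $\rho(x_0)$, which requires the scaling to be performed first so that all subsequent estimates are on the fixed cube $Q_1$.
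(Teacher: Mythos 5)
Your proposal is correct and follows essentially the same route as the paper: rescale to $Q_1(\overline{x_0})$ via \eqref{sc0}, observe that $Q_r(x_0)\cap\Gamma=\varnothing$ makes $\rho$ comparable to a constant there so that \eqref{eq:rescaled} is a standard uniformly elliptic equation with smooth coefficients, derive higher-order Caccioppoli estimates, pass to $C^\sigma$ by Sobolev embedding, and transfer back through \eqref{sc3}--\eqref{sc5}. The only (harmless) deviation is that you recover the $\rho$-direction regularity by bootstrapping through the cylindrical-coordinate equation \eqref{equi equation}, whereas the paper simply applies difference quotients in all Cartesian directions on the rescaled cube, where the coefficients $\rho^{-2\alpha}\overline{a^{ij}}$ are already smooth and nondegenerate.
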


\begin{proof} We shall use the change of variables as in (\ref{sc0}) and obtain the equation \eqref{eq:rescaled} for $\overline{u}$. By the assumption $Q_r(x_0)\bigcap\Gamma=\varnothing$, we have $Q_1(\overline{x_0})\cap \Gamma=\varnothing$, 
\[
\frac13<\rho(\overline{x})<\frac53  \quad \mbox{in }Q_{2/3}(\overline{x_0}).
\]
Now we are in the setting of the standard uniformly elliptic equations with smooth coefficients 
\[
-\partial_i(\rho^{-2\alpha}\overline{a^{ij}}\partial_j \overline{u}(\overline{x}))+\overline{\lambda} \rho^{-2\alpha}\overline{u}(\overline{x})= 0 \quad \mbox{in } \ Q_{2/3}(\overline{x_0}),
\]
where $\overline{\lambda} >0$.  Similar to Lemma \ref{lem3.1}, we can  derive  Caccioppoli type inequalities for derivatives in all directions. That is, for any non-negative integer $l$ and $k=1,2,\cdots,n$,
\begin{equation*}\begin{aligned}
&\|{D}\partial^l_k \overline{u}\|_{L^2(Q_{1/2}(x_0))}+\sqrt{\overline{\lambda}} \|\partial^l_k \overline{u}\|_{L^2(Q_{1/2}(x_0))} \leq C\|\overline{u}\|_{L^2(Q_{2/3}(x_0))},
\end{aligned}\end{equation*}where $C$ is independent of $\overline{\lambda}$.
From the standard energy estimates and the Sobolev embedding, we can have
\begin{equation}\label{3.2-1}\begin{aligned}
\relax[\overline{u}]_{C^\sigma(Q_{1/2}(\overline{x_0}))} \leq & C\left(\int_{Q_{2/3}
(\overline{x_0})}|\overline{u}|^2\mathrm{d}x\right)^{1/2}, \\
[D \overline{u}]_{C^\sigma(Q_{1/2}(\overline{x_0}))}	\leq &C\left(\int_{Q_{2/3}(\overline{x_0})}(|D\overline{u}|^2+\overline{\lambda} |\overline{u}|^2)\mathrm{d}x\right)^{1/2},
\end{aligned}
\end{equation}
where $C=C(n,\kappa,\alpha,\sigma)>0$. Finally, \eqref{in} follows from \eqref{sc3}-\eqref{sc5}. 
\end{proof}
We denote 
\[
\Theta_r(x_0'')=[0,2\pi]\times \prod^{n}_{i=3}(x^i_0-r, x^i_0+r).
\] 
Next, we consider the case $Q_1(\overline{x_0})\cap \Gamma\neq \varnothing$. It suffices to assume $x_0=0$. 
\begin{lem}\label{prop3.3}Let 
 $u\in \mathring{W}^{1,2}(Q_1,\mathrm{d}\mu)$ be a weak solution of (\ref{ho equation}) in $Q_1\backslash\Gamma$. Then for any $x$ in $Q_{1/2}\backslash\Gamma$ and $k \in \mathbb{N}\cup\{0\}$, we have
\begin{equation}\label{prop-3.5}
\begin{aligned}
\rho^{-\alpha}(x)|D^k_{\theta x''} u(x)|\leq C  \left(\dashint_{Q_{1}}|u|^2\mathrm{d}\mu \right)^{1/2}, \\ \rho^{-\alpha+1}(x)|\partial_\rho u(x)|\leq C\left(\dashint_{Q_{1}}(|Du|^2+\lambda|u|^2)\mathrm{d}\mu \right)^{1/2},
\end{aligned}
\end{equation}
where $C=C(n,\alpha,\kappa,k).$
\end{lem}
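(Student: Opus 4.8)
The plan is to combine the energy/Caccioppoli estimates of Lemma \ref{lem3.1} with a one-dimensional (in $\rho$) analysis of the equation, treating the $\theta$ and $x''$ directions as harmless since the coefficients are smooth in those variables. First I would pass to cylindrical coordinates and write the equation in the form \eqref{equi equation}. The key observation is that, by Lemma \ref{lem3.1}, for every $k$ we have $\widetilde D D_{\theta x''}^k u \in L^2(Q_r, \mathrm d\mu)$ for all $r < 1$ with the quantitative bounds $\|\widetilde D D_{\theta x''}^k u\|_{L^2(Q_r,\mathrm d\mu)} \le C(1-r)^{-(k+1)}\|u\|_{L^2(Q_1,\mathrm d\mu)}$; iterating the commutation with $\partial_\theta$ and $D_{x''}$ (whose coefficients in the transformed equation are smooth and bounded) gives control of arbitrarily many such derivatives. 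Thus for each fixed $\rho$, the slice function $\theta, x'' \mapsto u(\rho,\theta,x'')$ lies in $H^m(\Theta_r)$ for all $m$ with a bound of the form $\int_{1/3}^{2/3}\|u(\rho,\cdot)\|_{H^m(\Theta_{3/4})}^2 \rho^{1-2\alpha}\,\mathrm d\rho \le C\|u\|_{L^2(Q_1,\mathrm d\mu)}^2$. By the Sobolev embedding on the compact manifold $\Theta$ (dimension $n-1$), this upgrades to an $L^2$-in-$\rho$ bound on $\sup_{\theta,x''}|D_{\theta x''}^k u(\rho,\theta,x'')|$.

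Next I would convert the $L^2$-in-$\rho$ information into a pointwise-in-$\rho$ bound. Fix $k$ and set $v(\rho) = \|D_{\theta x''}^k u(\rho,\cdot)\|_{C^0(\Theta_{3/4})}$ (or work with a single Fourier/spectral mode to stay with genuine solutions of an ODE). The function $w = D_{\theta x''}^k u$ satisfies an equation of the same structural type, $-\frac1\rho \widetilde D_i(\rho^{1-2\alpha} b^{ij}\widetilde D_j w) + \lambda \rho^{-2\alpha} w = (\text{lower order in }k\text{ terms from differentiating } b^{ij})$, and in the region $\rho \in (0,3/4)$ near $\Gamma$ we want to show $|w(x)| \le C\rho^\alpha(x)(\dashint_{Q_1}|u|^2\,\mathrm d\mu)^{1/2}$. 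The natural device is a barrier/comparison argument: the indicial analysis of the radial ODE $-(\rho^{1-2\alpha} y')' = 0$ has solutions $y = \mathrm{const}$ and $y = \rho^{2\alpha}$ (up to logarithm when $\alpha = 1/2$), and only the $\rho^{2\alpha}$ behavior is compatible with the zero trace on $\Gamma$ and with finite $\mathrm d\mu$-energy (since $\int_0 \rho^{-2\alpha}\,\rho\,\mathrm d\rho = \int_0 \rho^{1-2\alpha}\,\mathrm d\rho$ diverges for $\alpha \ge 1$, forcing the constant mode to vanish; for $\alpha = 1/2$ the log mode is likewise excluded by $\mathring W^{1,2}$-membership). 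Using $C\rho^{2\alpha} \pm$ (the $L^2$-bounded quantity) as super/subsolutions on an annular region and invoking the maximum principle for \eqref{equi equation} (valid since $\lambda \ge 0$), I would deduce $|w(x)| \le C\rho^\alpha$ after renormalizing by the $L^2$ average; then $\rho^{-\alpha}|D^k_{\theta x''}u| \le C(\dashint_{Q_1}|u|^2\,\mathrm d\mu)^{1/2}$, which is the first inequality. For the second inequality involving $\partial_\rho u$, I would differentiate the equation once more in $\rho$, or equivalently note that from the first estimate and the equation itself one controls $\widetilde D_1(\rho^{1-2\alpha}b^{ij}\widetilde D_j u)$, then integrate: writing the radial part as a first-order ODE for $\rho^{1-2\alpha}\partial_\rho u$ with right-hand side already estimated, one integrates from $0$ to obtain $\rho^{1-2\alpha}|\partial_\rho u| \le C\rho^{1-\alpha}(\dashint_{Q_1}(|Du|^2 + \lambda|u|^2)\,\mathrm d\mu)^{1/2}$, i.e. $\rho^{-\alpha+1}|\partial_\rho u| \le C(\dashint_{Q_1}(|Du|^2+\lambda|u|^2)\,\mathrm d\mu)^{1/2}$; the appearance of the $Du$-norm rather than the $u$-norm here mirrors exactly the situation in Proposition \ref{prop3.2} and Lemma \ref{lem3.1}.

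The main obstacle, which the authors flag in the introduction as "the additional singularity introduced by the angular derivatives," is making the barrier argument rigorous for the transformed equation \eqref{equi equation}: unlike the clean radial model, the operator $\widetilde D_2 = \frac1\rho\partial_\theta$ carries an explicit $\rho^{-1}$, so when one differentiates in $\theta$ to control $D^k_{\theta x''}u$ the equation for $w$ acquires lower-order terms with negative powers of $\rho$, and one must check that these do not destroy the $\rho^{2\alpha}$ indicial behavior — this is where the detailed ODE analysis enters, presumably by expanding in Fourier modes $e^{\mathrm i m\theta}$, noting that the $m$-th mode satisfies a Bessel-type radial equation $-(\rho^{1-2\alpha}(b\,y')' ) + (c\,m^2\rho^{-1-2\alpha} + \lambda\rho^{-2\alpha})y = \cdots$ whose solution regular at $\rho = 0$ decays like $\rho^{|m| + \text{(something)}}$, hence is $O(\rho^\alpha)$ uniformly, with constants summing over $m$ because of the fast decay of Fourier coefficients coming from the higher $H^m$ bounds established in the first step. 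A secondary technical point is the degenerate/log case $\alpha = 1/2$, where the constant and logarithmic radial modes must be excluded by hand using the $\mathring W^{1,2}(Q_1,\mathrm d\mu)$ membership (i.e. the zero-trace condition), but this is routine once the mode decomposition is set up.
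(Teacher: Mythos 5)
Your second step (the estimate for $\partial_\rho u$) is essentially the paper's argument: one sets $\mathcal U=\sum_j b^{1j}\widetilde D_j u$, reads off a first-order ODE for $\rho^{1-2\alpha}\mathcal U$ from \eqref{equi equation}, and integrates — except that the paper integrates from $\rho=1/2$ (where the interior estimate of Proposition \ref{prop3.2} controls $\mathcal U$) rather than from $0$, since the limit of $\rho^{1-2\alpha}\partial_\rho u$ at $\rho=0$ is not known a priori. The gap is in your treatment of the first inequality. The barrier/maximum-principle argument with the indicial roots $1$ and $\rho^{2\alpha}$, and the Fourier-mode/Bessel analysis, are the tools the paper uses \emph{later} (Lemma \ref{lem3.5} and Proposition \ref{optimal}) to upgrade the decay from $\rho^{\alpha}$ to the sharp $\rho^{2\alpha}$; they are not needed for, and cannot be used to prove, the present lemma without circularity. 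Indeed, to run the comparison on an annulus you must already control $u$ pointwise on the inner part of its boundary (i.e., know that $u\to 0$ at $\Gamma$ with some rate), and the paper's barrier iteration in Lemma \ref{lem3.5} is initialized precisely by the bound $\rho^{-\alpha}|u|\le C$ that this lemma provides. Your proposal is also internally inconsistent on this point: a supersolution comparable to $\rho^{2\alpha}$ would yield $|w|\le C\rho^{2\alpha}$, not the claimed $C\rho^{\alpha}$, and your exclusion of the constant mode via divergence of $\int_0\rho^{1-2\alpha}\,\mathrm d\rho$ fails for $\alpha\in[\tfrac12,1)$, where that integral converges and the constant mode must instead be killed by the zero-trace condition.

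The correct route — for which you have already assembled all the ingredients — is elementary. From Lemma \ref{lem3.1} and the Sobolev embedding on the slices $\Theta_{1/2}$ one gets, for every $k$,
\begin{equation*}
\int_0^{1/2}\sup_{(\theta,x'')}|\partial_\rho D^k_{\theta x''}u(\rho,\theta,x'')|^2\,\rho^{1-2\alpha}\,\mathrm d\rho\le C\|u\|^2_{L^2(Q_1,\mathrm d\mu)}.
\end{equation*}
Then the fundamental theorem of calculus from $\rho=0$ (using the zero trace) together with Cauchy--Schwarz against the weight gives
\begin{equation*}
|D^k_{\theta x''}u(\rho,\theta,x'')|\le\int_0^\rho|\partial_\rho D^k_{\theta x''}u|\,\mathrm ds
\le\Bigl(\int_0^\rho|\partial_\rho D^k_{\theta x''}u|^2 s^{1-2\alpha}\,\mathrm ds\Bigr)^{1/2}\Bigl(\int_0^\rho s^{2\alpha-1}\,\mathrm ds\Bigr)^{1/2}
\le C\rho^{\alpha}\|u\|_{L^2(Q_1,\mathrm d\mu)},
\end{equation*}
since $\int_0^\rho s^{2\alpha-1}\,\mathrm ds=\rho^{2\alpha}/(2\alpha)<\infty$ for $\alpha\ge\tfrac12$. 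No comparison principle, mode decomposition, or indicial analysis is required at this stage.
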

\begin{proof}
We will use the cylindrical coordinates transformation in the following proof.
By applying the Sobolev embedding in $(\theta, x'')$, we have for any $x \in Q_{1/2}\backslash\Gamma$,
\begin{equation}\label{embedding}
|\partial_\rho u(\rho,\theta,x'')|\leq C\|\partial_\rho u(\rho,\cdot)\|_{W^{l,2}(\Theta_{1/2})}
\end{equation}
with some integer $l>\frac{n-1}{2}$. 
Then, using (\ref{embedding}) and Lemma \ref{lem3.1}, we have
\begin{equation}\begin{aligned}\label{3}
&\int_0^{1/2}|\partial_\rho u(\rho,\theta, x'')|^2 	\rho^{-2\alpha}\rho\mathrm{d}\rho \\ \leq &C\int_0^{1/2}	\|\partial_\rho u\|^2_{W^{l,2}(\Theta_{1/2})} \,\rho^{-2\alpha}\rho\mathrm{d}\rho \\ \leq & C\int_0^{1/2}\int_{\Theta_{1/2}}	\sum_{k\leq l}|D^{k}_{\theta x''}\partial_\rho u|^2 \rho^{-2\alpha}\rho\,\mathrm{d}\theta \,\mathrm{d}x'' \,\mathrm{d}\rho\\ \leq & C\|u\|_{L^2(Q_1,\mathrm{d}\mu)}.
\end{aligned}
\end{equation}
By (\ref{3}) and H\"older's inequality, for any $(\theta,x'')\in \Theta_{1/2}$,
\begin{equation*}\begin{aligned}
&\int_0^\rho|\partial_\rho u(\rho,\theta, x'')|\mathrm{d}\rho 
\\ \leq &C\left(\int_0^\rho|\partial_\rho u(\rho,\theta, x'')|^2\rho^{-2\alpha}\rho \,\mathrm{d}\rho\right)^{1/2}\left(\int_0^\rho \rho ^{-1+2\alpha}\mathrm{d}\rho\right)^{1/2}
\\ \leq & C \rho^\alpha\|u\|_{L^2(Q_1,\mathrm{d}\mu)}.
\end{aligned}	
\end{equation*}
By the fundamental theorem of calculus and the boundary condition $u(0,\theta,x'')=0$, we obtain
\begin{equation}\label{u}
	|u(\rho,\theta,x'')|\leq \int_0^\rho|\partial_\rho u|\mathrm{d}\rho\leq C\rho^\alpha\|u\|_{L^2(Q_1,\mathrm{d}\mu)}.
\end{equation}
Since $\partial^{\beta_i}_{i} u$ $(i=3,\cdots,n)$ satisfies the same equation as $u$ under the assumption that $\overline{a^{ij}}$ are constants, by using (\ref{u})  and Lemma \ref{lem3.1}, for any  non-negative integer $k$, we get
\begin{equation}\label{thetaz}
	|D_{x''}^{k} u(\rho,\theta,x'')|\leq C\rho^\alpha\|u\|_{L^2(Q_1,\mathrm{d}\mu)}.
\end{equation}
Similarly, for any non-negative integer $k_2$, we obtain
\begin{equation}\label{th}\begin{aligned}
|\partial^{k_2}_\theta u(\rho,\theta,x'')|	& \leq \int_0^\rho|\partial_\rho \partial^{k_2}_\theta u|\mathrm{d}\rho \\ &\leq \left(\int_0^\rho|\partial_\rho \partial^{k_2}_\theta u|^2\rho^{-2\alpha+1}\mathrm{d}\rho\right)^{1/2}\left(\int_0^\rho\rho^{2\alpha-1}\mathrm{d}\rho\right)^{1/2} \\ &\leq C\rho^{\alpha}\left(\int_0^1\sum_{k\leq l}\int_{\Theta_1}| D_{\theta x''}^{k} \partial_\rho \partial_\theta^{k_2} u|^2\mathrm{d}\theta \,\mathrm{d}x'' \,\rho^{-2\alpha+1}\mathrm{d}\rho\right)^{1/2}\\ &\leq C\rho^{\alpha}\|u\|_{L^2(Q_1,\mathrm{d}\mu)}.
\end{aligned}\end{equation}
Thus, the first inequality in \eqref{prop-3.5} follows from \eqref{thetaz} and \eqref{th}.

Next, let $\mathcal{U}=\sum^{n}\limits_{j=1}b^{1j}\widetilde{D}_ju$.
By equation (\ref{equi equation}), we can get
\begin{equation}\label{t}\begin{aligned}
|\partial_\rho (\rho^{1-2\alpha}\mathcal{U})|	&=|\rho^{-2\alpha}\partial_\theta(b^{2j}\widetilde{D}_j u)+\sum_{i=3}^{n}\rho^{1-2\alpha}b^{ij}\widetilde{D}_jD_iu+\lambda \rho^{1-2\alpha}u|
\\&\leq C\rho^{-2\alpha}(|\widetilde{D}\partial_\theta u| +|\widetilde{D} u|+\rho|\widetilde{D}D_{x''} u|+\lambda \rho|u|).
\end{aligned}
\end{equation}
By using the Sobolev inequality in $(\theta, x'')$, for any $x\in Q_{1/2}$, we see that for some integer $l>\frac{n-1}{2}$
\begin{equation}\begin{aligned}\label{SE2}
 & \sup_{\Theta_{1/2}}\left(|\widetilde{D}\partial_\theta u|+|\widetilde{D} u|+|\widetilde{D}D_{x''} u|+\lambda|u|\right)\\ \leq & C\left(\|\widetilde{D} \partial_\theta u\|_{W^{l,2}(\Theta_{1/2})}+\|\widetilde{D}u\|_{W^{l,2}(\Theta_{1/2})}+\|\widetilde{D}D_{x''} u\|_{W^{l,2}(\Theta_{1/2})}+\lambda\|u\|_{W^{l,2}(\Theta_{1/2})}\right).	
\end{aligned}\end{equation}
Thus, \eqref{t} and \eqref{SE2} give
\begin{equation}\begin{aligned}\label{differ}
&|\rho^{1-2\alpha}\mathcal{U}(\rho, \theta,x'')-(\frac{1}{2})^{1-2\alpha}\mathcal{U}(\frac{1}{2}, \theta,x'')|
\\ \leq &\int_{1/2}^\rho |\partial_\rho(\rho^{1-2\alpha}\mathcal{U})|\mathrm{d}\rho
\\ \leq &\left(\int_{1/2}^\rho |\partial_\rho(\rho^{1-2\alpha}\mathcal{U})|^2\rho^{1+2\alpha}\mathrm{d}\rho\right)^{1/2}\left(\int_{\frac{1}{2}}^\rho \rho^{-2\alpha-1}\mathrm{d}\rho\right)^{1/2}
\\ \leq &C\left(1+\rho^{-\alpha}\right)\left( \int_{1/2}^\rho (| \widetilde{D}\partial_\theta u|^2+| \widetilde{D}u|^2+\rho^2|\widetilde{D}\partial_{x''}u|^2+\lambda \rho^2|u|^2)\rho^{-2\alpha}\rho \mathrm{d}\rho\right)^{1/2}
\\ \leq &C\left(1+\rho^{-\alpha}\right)\left( \int_{1/2}^\rho (\|\partial_\theta \widetilde{D} u\|^2_{W^{l,2}(\Theta_{1/2})}+\|\widetilde{D}D_{x''}u\|^2_{W^{l,2}(\Theta_{1/2})}+\lambda\|u\|^2_{W^{l,2}(\Theta_{1/2})})\rho^{-2\alpha+1} \mathrm{d}\rho \right)^{1/2}
\\ \leq &C\left(1+\rho^{-\alpha}\right)\left(\|Du\|_{L^2(Q_{1},\mathrm{d}\mu)}+\sqrt{\lambda}\|u\|_{L^2(Q_{1},\mathrm{d}\mu)}\right),
\end{aligned}
\end{equation}where we used Lemma \ref{lem3.1} in the last inequality.
This together with the interior gradient estimate (\ref{in}) give
\begin{equation*}
	|\rho^{1-2\alpha}\mathcal{U}(\rho ,\theta,x'')|\leq C\left(1+\rho^{-\alpha}\right)\left(\|Du\|_{L^2(Q_{1},\mathrm{d}\mu)}+\sqrt{\lambda}\|u\|_{L^2(Q_{1},\mathrm{d}\mu)}\right).
\end{equation*}
Therefore, by using the definition of $\mathcal{U}$, the ellipticity condition and (\ref{differ}), we have
\begin{equation}\label{rho}
	|\partial_\rho u(\rho ,\theta,x'')|\leq C\left(\rho^{2\alpha-1}+\rho^{\alpha-1}\right)\left(\|Du\|_{L^2(Q_{1},\mathrm{d}\mu)}+\sqrt{\lambda}\|u\|_{L^2(Q_{1},\mathrm{d}\mu)}\right),
\end{equation}
which together with (\ref{thetaz}) and (\ref{th}) give
\begin{equation}\label{f}
\rho^{1-\alpha}|\widetilde{D}u(\rho,\theta,x'')|\leq C\left(\|Du\|_{L^2(Q_{1},\mathrm{d}\mu)}+\sqrt{\lambda}\|u\|_{L^2(Q_{1},\mathrm{d}\mu)}\right).
\end{equation}
Thus, we conclude that (\ref{prop-3.5}) is valid.
\end{proof}

While the proof above adapts techniques from \cite{HT}, this approach fails to produce a sharp weight estimate on the left-hand side in our setting. To address this, we develop a new method to establish the sharp bound.
Inspired by Lemma 4.4 in \cite{LT}, we first establish an almost sharp version. 

\begin{lem}\label{lem3.5}
	For any $\varepsilon>0$, there exists a constant $C_\varepsilon$ such that if $u\in \mathring{W}^{1,2}(Q_1,\mathrm{d}\mu)$ is a solution of (\ref{ho equation}), then we have
	\begin{equation*}	\sup_{Q_{1/2}\backslash\Gamma}\rho^{-2\alpha+\varepsilon}|u|\leq C_\varepsilon \left(\dashint_{Q_1}|u|^2\rho^{-2\alpha}\mathrm{d}x\right)^{1/2},
	\end{equation*}where $C_\varepsilon=C_\varepsilon(n,\alpha, \kappa,\sigma,\varepsilon)$.
\end{lem}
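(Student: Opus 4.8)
I want to prove a pointwise bound $\rho^{-2\alpha+\varepsilon}|u| \le C_\varepsilon(\dashint_{Q_1}|u|^2\,\mathrm{d}\mu)^{1/2}$ near $\Gamma$, which is an almost-sharp improvement of the bound $|u(x)| \le C\rho^\alpha\|u\|_{L^2(Q_1,\mathrm{d}\mu)}$ obtained in \eqref{u} (that estimate only gives exponent $\alpha$, whereas the claim gives essentially $2\alpha$). The natural tool is a \emph{barrier / comparison argument} in the radial variable $\rho$, combined with the smoothness already established in the angular and $x''$ directions (Lemma \ref{prop3.3}). Let me sketch how.

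First I would normalize, by Lemma \ref{prop3.3} and scaling, so that $M := (\dashint_{Q_1}|u|^2\,\mathrm{d}\mu)^{1/2} = 1$ and work on, say, $Q_{3/4}$; by \eqref{thetaz}, \eqref{th}, \eqref{rho} we know $|u|, |D_{\theta x''}u| \le C\rho^\alpha$ and $|\partial_\rho u| \le C(\rho^{\alpha-1}+\rho^{2\alpha-1})$ there. The key is the ODE/PDE structure from \eqref{equi equation}: writing $u$ in cylindrical coordinates, $u$ solves
\[
-\partial_\rho(\rho^{1-2\alpha}b^{11}\partial_\rho u) + (\text{angular and }x''\text{ terms})\cdot\rho^{1-2\alpha} + \lambda\rho^{1-2\alpha}u = 0,
\]
so that for each fixed $(\theta, x'')$ the function $\rho \mapsto u(\rho,\theta,x'')$ satisfies a second-order ODE in $\rho$ whose leading part is $-(\rho^{1-2\alpha}b^{11}u_\rho)_\rho$, with a forcing term controlled — thanks to Lemma \ref{prop3.3} — by $C\rho^{1-2\alpha}\cdot\rho^{\alpha} = C\rho^{1-\alpha}$ (the angular/$x''$ derivatives contribute the $\rho^\alpha$ decay; for the genuinely singular angular term $\rho^{-2\alpha}\partial_\theta(b^{2j}\widetilde D_j u)$ one uses $|\widetilde D\partial_\theta u|\le C\rho^{\alpha-1}$, giving $\rho^{-2\alpha}\cdot\rho^{\alpha-1}\cdot\rho = \rho^{-\alpha}$, hmm — this needs care). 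The homogeneous solutions of $-(\rho^{1-2\alpha}u_\rho)_\rho = 0$ are $1$ and $\rho^{2\alpha}$, so the barrier should be built from $\rho^{2\alpha-\varepsilon}$: one computes $-(\rho^{1-2\alpha}(\rho^{2\alpha-\varepsilon})_\rho)_\rho = (2\alpha-\varepsilon)\varepsilon\,\rho^{-1-\varepsilon}$, which is a \emph{positive} supersolution-type term with a favorable sign, and this positivity is what lets $\rho^{2\alpha-\varepsilon}$ dominate the forcing once $\varepsilon$ is small.

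The concrete step is then a maximum-principle comparison. Fix a direction $(\theta, x'')$ and set $v(\rho) = A\rho^{2\alpha-\varepsilon}$ for a large constant $A = A(\varepsilon, n, \alpha, \kappa)$ to be chosen. I would show $w := v - |u|$ (suitably interpreted, e.g. comparing $\pm u$ separately, or working with $u^2$) is a supersolution of the scalar ODE with the correct forcing on $(0, 1/2)$, vanishes at $\rho = 0$ (both terms do, since $2\alpha - \varepsilon > 0$), and is nonnegative at $\rho = 1/2$ (because $|u(1/2,\cdot)| \le C$ by \eqref{f} and we take $A \ge C2^{\varepsilon-2\alpha}$); the weak maximum principle for the operator $L v = -(\rho^{1-2\alpha}b^{11}v_\rho)_\rho$ on $(0,1/2)$ — which holds because the weight $\rho^{1-2\alpha}$ is positive and the zeroth-order coefficient from $\lambda\rho^{1-2\alpha}$ has the good sign — then forces $w \ge 0$, i.e. $|u(\rho,\theta,x'')| \le A\rho^{2\alpha-\varepsilon}$. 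Since $A$ and the argument are uniform in $(\theta,x'')$, this yields $\sup_{Q_{1/2}}\rho^{-2\alpha+\varepsilon}|u| \le C_\varepsilon$. One must also absorb the $\lambda$-term: since $\lambda|u| \le C\lambda\rho^\alpha$ and we only need a \emph{supersolution} inequality, the $\lambda$ term has a sign that helps (it adds $\lambda\rho^{1-2\alpha}v \ge 0$ to $Lv$), so after scaling to $\bar\lambda$ on $Q_1$ this causes no trouble; alternatively one localizes to keep $\lambda$ bounded.

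\medskip

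\textbf{The main obstacle.} The delicate point is not the barrier computation itself but verifying that the forcing term — i.e. everything in \eqref{equi equation} other than the $\widetilde D_1$-part — really decays like a controlled power of $\rho$ \emph{that is dominated by} $\rho^{-1-\varepsilon}$ (the sign-definite output of the barrier). The genuinely singular contribution is the angular divergence term $\rho^{-2\alpha}\partial_\theta(b^{2j}\widetilde D_j u)$; from Lemma \ref{prop3.3} its relevant bound after dividing by the ODE weight is of order $\rho^{-\alpha-1}$ or so, and one needs $-\alpha - 1 > -1$? — no; rather one needs the barrier's $\rho^{-1-\varepsilon}$ to beat it, which requires bootstrapping: start with the crude bound $|u|\le C\rho^\alpha$, feed it into Lemma \ref{prop3.3} to get angular-derivative decay, run the barrier argument to improve the exponent, and iterate. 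I expect the proof will therefore be an \emph{iteration}: the exponent is improved from $\alpha$ toward $2\alpha$ in finitely many (or infinitely many, converging) steps, each step gaining a fixed amount until one reaches $2\alpha - \varepsilon$; making the bookkeeping of constants $C_\varepsilon$ uniform through this iteration is the part that requires genuine care.
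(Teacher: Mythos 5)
Your overall strategy (barrier, comparison principle, iteration of the exponent from $\alpha$ toward $2\alpha-\varepsilon$) is the right family of ideas, and you correctly single out the angular divergence term as the crux. But the specific reduction you propose --- freezing $(\theta,x'')$ and running a one-dimensional maximum principle for $Lv=-(\rho^{1-2\alpha}b^{11}v_\rho)_\rho$ with the angular and $x''$ terms moved to the right-hand side --- does not close, and the bootstrap you hope will rescue it is circular. Quantitatively: if the current information is $|u|\le C\rho^{\mu}$, then the best available pointwise bound on $\partial_\theta^2u$ is also $C\rho^{\mu}$ (via rescaled interior estimates on balls of radius comparable to $\rho(x)$; one cannot do better, since $\partial_\theta u$ does not satisfy the same equation once the coefficients are written in cylindrical coordinates). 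Hence the forcing term $\rho^{-1-2\alpha}\partial_\theta(b^{22}\partial_\theta u)$ is of size $\rho^{\mu-2\alpha-1}$, while the barrier $\rho^{\beta}$ produces $-\partial_\rho(\rho^{1-2\alpha}\partial_\rho\rho^{\beta})=\beta(2\alpha-\beta)\rho^{\beta-2\alpha-1}$. Domination as $\rho\to0$ forces $\beta\le\mu$: each step of your iteration reproduces exactly the exponent you started with and never improves it. This is not an artifact of crude estimates --- for the model operator the angular Fourier modes decay like $\rho^{\alpha+\sqrt{\alpha^2+k^2}}$, and isolating the $k=0$ mode (for which the radial ODE picture is clean) from the others requires the spectral decomposition that the paper only deploys later, in Proposition \ref{optimal}, for the sharp endpoint.

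The paper's proof of Lemma \ref{lem3.5} avoids the problem by keeping the angular derivatives of $u$ inside the elliptic operator rather than treating them as forcing. After a linear change of variables $y=Ex$ diagonalizing the $2\times2$ block of $\overline{A}$ (so that $h^{ij}\partial_i\rho\,\partial_j\rho=b(\theta)>0$), it uses the full $n$-dimensional barrier $m(y)=\rho^{\beta}+\rho^{\beta-\delta}|y-y_0|^2$ and computes $\mathrm{div}_y(\rho^{-2\alpha}H\nabla_ym)\le C\beta(\beta-2\alpha)b(\theta)\rho^{\beta-2\alpha-2}<0$ for $0<\beta<2\alpha$. The only genuine forcing is then the zeroth-order term $\lambda\rho^{-2\alpha}u=O(\rho^{\mu-2\alpha})$, which is two powers of $\rho$ weaker than the barrier output $\rho^{\beta-2\alpha-2}$; this gap is what lets each comparison step (on small cylinders $Q_c(y_0)$, with the quadratic piece of $m$ controlling $u$ on the lateral boundary) raise the exponent by $\delta$, so that finitely many steps reach $2\alpha-\varepsilon$. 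To salvage your argument you would need to pass to such a multi-dimensional barrier, or first project onto the zero angular mode; as written, the 1D comparison cannot get past $\mu=\alpha$.
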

\begin{proof}
We represent  $\overline{A}$ in block form as \[
\overline{A}=\begin{pmatrix} 
A_2  &J\\ 
K  &A_{n-2}
\end{pmatrix},
\]
where $A_2$ and $A_{n-2}$ are the $2$-order and $(n-2)$-order submatrices respectively,  $J$ is a $2\times(n-2)$ and $K$ is an $(n-2)\times 2$ submatrix.  

By the symmetry and uniform ellipticity of $\overline{A}$, we know that  there exists an invertible matrix $E_2=[e^{ij}]_{2\times 2}$ such that $E_2'A_2E_2=I_2$, where $E_2'$ is the transpose of $E_2$ and $I_2$ is the  second-order identity matrix. Extend $E_2$ to an matrix $E=[e^{ij}]_{n\times n}$ by setting 
\[
E=\begin{pmatrix} 
E_2  &\mathbf{0}\\ 
\mathbf{0}  &I_{n-2}
\end{pmatrix},
\]
where  $\mathbf{0}$ denotes appropriately sized zero matrices.
Define $y=(y^1,\cdots,y^n)=E \cdot x$ and then $\nabla_x=E \cdot \nabla_y$.   
We can see that $$\mathrm{div}_x(\rho^{-2\alpha}\overline{A} \nabla_x u)=\mathrm{div}_y(|E_2^{-1}y'|^{-2\alpha}E'\overline{A}E \nabla_y u)=\mathrm{div}_y(|y'|^{-2\alpha}|E_2^{-1}\frac{y'}{|y'|}|^{-2\alpha}E'\overline{A}E \nabla_{y} u),$$where $y'=(y^1,y^2)$.  
 It's straightforward to observe  that $|E_2^{-1}\frac{y'}{|y'|}|^{-2\alpha}$  depends only on $\theta$ under the cylindrical coordinates and $\rho(y)=|y'|$. Let  $H=(h^{ij})_{n\times n}=|E_2^{-1}\frac{y'}{|y'|}|^{-2\alpha}E'\overline{A}E$, and $b(\theta)=|E_2^{-1}\frac{y'}{|y'|}|^{-2\alpha}$, which is a positive scalar function. 
 Since the matrix $E$ is invertible, we only need to consider the equation
 \begin{equation}\label{eq3}
 	\mathrm{div}_{y}(\rho^{-2\alpha}H \nabla_{y} u)-\lambda  \rho^{-2\alpha}u=0.
 \end{equation}
 
 Let $\beta<2\alpha$ and $0<\delta<1$. For $y_0\in Q_1$, define the function $m(y)=\rho^{\beta}+\rho^{\beta-\delta}|y-y_0|^2$. It is straightforward to demonstrate that
\begin{equation*}\begin{aligned}
	&\mathrm{div}_y\,(\rho^{-2\alpha }H \nabla_y m)\\=
	& - 2\alpha \rho^{-2\alpha-1} h^{ij} \partial_i \rho \Big[ \beta \rho^{\beta-1} \partial_j\rho+ (\beta-\delta)\rho^{\beta-\delta-1} \partial_j\rho |y- y_0|^2 + 2\rho^{\beta-\delta}(y^j - y_{0}^j) \Big] \\
& + \rho^{-2\alpha} h^{ij}\bigg[ \beta(\beta-1)\rho^{\beta-2} \partial_i \rho\,\partial_j \rho + \beta \rho^{\beta-1} \partial_{ij}\rho \\
& + (\beta-\delta)(\beta-\delta-1)\rho^{\beta-\delta-2}\partial_i \rho\,\partial_j \rho |y - y_0|^2 + (\beta-\delta)\rho^{\beta-\delta-1} \partial_{ij}\rho |y - y_0|^2 \\
&+ 2(\beta-\delta)\rho^{\beta-\delta-1} \partial_j \rho \,(y^i - y^i_{0}) + 2(\beta-\delta)\rho^{\beta-\delta-1} \partial_i \rho\, (y^j - y^j_{0}) + 2\rho^{\beta-\delta} \delta_{ij}\bigg].
\end{aligned}
\end{equation*}
Note that $h^{ij}=b(\theta)\delta^{ij}$ for $i,j=1,2$,  we have $h^{ij}\partial_i \rho\partial_j \rho=b(\theta)>0$. Consequently, there exists a real number $r_1<1$ and a positive constant $C$  such that
\begin{equation*}\begin{aligned}
	\mathrm{div}_y\,(\rho^{-2\alpha}H \nabla_y m)&\leq  C\beta(\beta-2\alpha)b(\theta) \rho^{\beta-2\alpha-2}<0  \quad \mbox{in}\, Q_{r_1} .          
\end{aligned}\end{equation*}
If $\alpha \ge 2$, we have $\beta-2\alpha-2<-\alpha$ when $\beta<2\alpha$. By Lemma \ref{prop3.3}, we have                                                                       \begin{equation*}\begin{aligned}
	\mathrm{div}_y\,(\rho^{-2\alpha}H \nabla_y m)&\leq -C \lambda \rho^{-2\alpha}|u|  \quad \mbox{in}\, Q_{r_1} .          
\end{aligned}\end{equation*}                                                                  
Take another $r_2\in (0,r_1)$. Set $c=r_1-r_2>0$. For any $\varepsilon>0$, we fix $\delta$ such that $l\delta=\alpha-\varepsilon$ for some integer $l$. We first take $m$ with $\beta=\alpha+\delta$. By Lemma \ref{prop3.3}, we see that $\rho^{-\alpha}u$ is bounded in $Q_{r_1}$. For $y_0\in Q_{r_2}$ and $y \in \partial Q_{c}(y_0) $, we have
\begin{equation*}
|u(y)|\leq \|\rho^{-\beta+\delta}u\|_{L^{\infty}(Q_{r_1})}\rho^{\beta-\delta}(y)\leq c^{-2}m\|\rho^{-\alpha}u\|_{L^{\infty}(Q_{r_1})}.
\end{equation*}
Notice that $m=0$ on $\Gamma$, and (\ref{ho equation}) holds.
By the comparison principle, we obtain
\begin{equation*}
|u(y)|	\leq c^{-2}m\|\rho^{-\alpha}u\|_{L^{\infty}(Q_{r_1})}  \quad \mbox{in } Q_{c}(y_0).
\end{equation*}
In particular, for $y = y_0$, we have 
\[|u(y_0)|	\leq c^{-2}\rho^{\alpha+\delta}(y_0)\|\rho^{-\alpha}u\|_{L^{\infty}(Q_{r_1})}\leq C\rho^{\alpha+\delta}(y_0).\] Since $y_0$ is arbitrary in $Q_{r_2}$, we have $|u(y)|\leq C\rho^{\alpha+\delta}(y)$ in $Q_{r_2}$.

Next, we take $m$ with $\beta=\alpha+2\delta$. Similarly, we can obtain $|u|\leq C\rho^{\alpha+2\delta}$ in $Q_{r_3}$ with some $ r_3<r_2$.
 Then we repeat this argument successively with $\beta=\alpha+j\delta$ for $3\leq j\leq l$. We have $|u|\leq C\rho^{2\alpha-\varepsilon}$ in some region $Q_{r_{l+1}}$.

If $\frac{1}{2} \le \alpha<2$, we take $\beta<\alpha+2$. By replacing the earlier condition $\beta < 2\alpha$ in the above argument with this choice, we find the estimate $$|u|\leq C\rho^{\alpha+2-\varepsilon} $$  holds in some region $Q_{r}$.
To derive the desired result, we now repeat the same procedure under the assumption  $\beta<2\alpha$. Note that if $\beta<2\alpha$ and $0<\varepsilon<1$, then $\beta-2\alpha-2<-2\alpha+\alpha+2-\varepsilon$. Using the previously established bound $|u|\leq C\rho^{\alpha+2-\varepsilon}$, we conclude that there exists a real number $d_1 < 1 $ such that 
\begin{equation*}\begin{aligned}
	\mathrm{div}_y\,(\rho^{-2\alpha}H \nabla_y m)&\leq -C \lambda \rho^{-2\alpha}|u|  \quad \mbox{in}\, Q_{d_1} .          
\end{aligned}\end{equation*}  
By iterating this process, we further obtain $|u|\leq C\rho^{2\alpha-\varepsilon}$ in a smaller region $Q_{d_{l+1}}$.

 If the radius $r_{l+1}$ or $d_{l+1}$ is smaller than $\frac{1}{2}$, we can also apply the interior estimate to extend validity of this estimate to extend this estimate to the region $Q_{1/2}$.
 Noticing that $\|\rho^{-\alpha}u\|_{L^\infty(Q_{1/2})}\leq C(\dashint_{Q_1}|u|^2\rho^{-2\alpha}\mathrm{d}x)^{1/2}$, we can conclude the proof.
\end{proof}

\begin{cor}\label{cor} 
Suppose that $u\in \mathring{W}^{1,2}(Q_1,\mathrm{d}\mu)$ is a weak solution of (\ref{ho equation}).
Then for any small $\varepsilon>0$ and any integer $k\ge 0$, we have	
\begin{equation}\label{cor3.5}
		\sup_{Q_{1/2}\backslash\Gamma}\rho^{-2\alpha+\varepsilon}| D^{k}_{\theta x''}u|+\sup_{Q_{1/2}\backslash\Gamma}\rho^{-2\alpha+1+\varepsilon}|\partial_\rho D^{k}_{\theta x''}u|\leq C_\varepsilon \left(\dashint_{Q_1}|u|^2\mathrm{d}\mu \right)^{1/2},
	\end{equation}
	where  $C_\varepsilon=C_\varepsilon(n,\alpha,\kappa,k,\varepsilon)$.
\end{cor}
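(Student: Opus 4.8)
The plan is to bootstrap from Lemma \ref{lem3.5} by exploiting the fact that, since $\overline{A}$ is constant, the tangential derivatives $\partial_\theta^{\beta_2}\partial_3^{\beta_3}\cdots\partial_n^{\beta_n}u$ solve the same homogeneous equation \eqref{ho equation} (in the $x''$-variables this is immediate by translation invariance; in $\theta$ it follows because, after passing to cylindrical coordinates, the transformed coefficient matrix $B=T\overline{A}T'$ depends on $\theta$ only through the rotation $T$, and differentiating \eqref{equi equation} in $\theta$ produces lower-order terms with the same structure — equivalently one may argue, as in Lemma \ref{prop3.3}, that $\partial_\theta u$ satisfies an equation of the same type). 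Thus, first I would fix a small $\varepsilon>0$ and apply Lemma \ref{lem3.5} on a slightly larger ball, say $Q_{3/4}$, to $u$ itself and, after a standard covering/rescaling, to each tangential derivative $D^k_{\theta x''}u$: since these are again solutions of \eqref{ho equation}, we get
\[
\sup_{Q_{1/2}\backslash\Gamma}\rho^{-2\alpha+\varepsilon}|D^k_{\theta x''}u|\leq C_\varepsilon\left(\dashint_{Q_{3/4}}|D^k_{\theta x''}u|^2\,\mathrm{d}\mu\right)^{1/2}.
\]
The right-hand side is then absorbed by the Caccioppoli-type estimate \eqref{3.1} of Lemma \ref{lem3.1} (iterated in the tangential variables up to order $k$), which bounds $\|D^k_{\theta x''}u\|_{L^2(Q_{3/4},\mathrm{d}\mu)}$ by $C\|u\|_{L^2(Q_1,\mathrm{d}\mu)}$. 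This handles the first term on the left of \eqref{cor3.5}.

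For the radial-derivative term I would proceed analogously but combine the tangential bound just obtained with the pointwise control of $\partial_\rho u$ that is already packaged in Lemma \ref{prop3.3}, or more directly repeat the barrier argument of Lemma \ref{lem3.5} applied to $v=D^k_{\theta x''}u$. The cleanest route: having established $|D^k_{\theta x''}u|\le C_\varepsilon\rho^{2\alpha-\varepsilon}\big(\dashint_{Q_{3/4}}|u|^2\mathrm{d}\mu\big)^{1/2}$ for every $k$, I would write, for the radial derivative, the identity used in the proof of Lemma \ref{prop3.3} (see \eqref{t}) — namely that $\partial_\rho(\rho^{1-2\alpha}\mathcal{U}^{(k)})$, where $\mathcal{U}^{(k)}=\sum_j b^{1j}\widetilde D_j D^k_{\theta x''}u$, is controlled by $\rho^{-2\alpha}$ times tangential quantities — and integrate from an interior point, using the already-proven bound $|D^m_{\theta x''}u|\le C_\varepsilon\rho^{2\alpha-\varepsilon}$ for $m=k,k+1$ together with $|\widetilde D D^k_{\theta x''}u|\le C\rho^{\alpha-1}$ near $\Gamma$ from \eqref{f}. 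Matching the boundary behaviour of the barrier $m(y)$ in Lemma \ref{lem3.5} with exponent $\beta$ just below $2\alpha$ yields $|\partial_\rho D^k_{\theta x''}u|\le C_\varepsilon\rho^{2\alpha-1-\varepsilon}\big(\dashint_{Q_1}|u|^2\mathrm{d}\mu\big)^{1/2}$, which is the second term.

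The main obstacle I anticipate is the rigorous justification that the angular derivatives $\partial_\theta^{\beta_2}u$ genuinely solve an equation to which Lemma \ref{lem3.5} applies: in cylindrical coordinates the operator in \eqref{equi equation} has $\theta$-dependent (though smooth and uniformly elliptic) coefficients, so $\partial_\theta u$ satisfies \eqref{equi equation} only modulo extra terms involving $\partial_\theta B\cdot\widetilde D u$ and lower-order pieces. One has to check that these inhomogeneous terms, after multiplication by the singular weight, are bounded by $\rho^{-2\alpha}$ times quantities already controlled by the previous step, so that the barrier comparison still closes — this is exactly the kind of "additional singularity from the angular derivatives" flagged in the introduction, and it is why the argument must be run inductively on $k$, feeding the order-$k$ bound into the order-$(k+1)$ estimate. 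The remaining points (covering $Q_{1/2}$ by balls on which Lemma \ref{prop3.3} or Lemma \ref{lem3.5} applies, and passing from $\dashint_{Q_{3/4}}$ to $\dashint_{Q_1}$) are routine.
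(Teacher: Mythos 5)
Your first step---applying Lemma \ref{lem3.5} to $u$ and to the pure $x''$-derivatives $D^{k}_{x''}u$ (which, the coefficients being constant, do solve \eqref{ho equation} with zero trace), and then absorbing the resulting $L^2$ averages via the Caccioppoli estimates of Lemma \ref{lem3.1}---is sound. The gap is in the angular direction, and you have correctly located it but not closed it. The claim that $\partial_\theta u$ ``satisfies an equation of the same type'' is false for a general constant matrix $\overline{A}$: the weight $\rho^{-2\alpha}$ is rotation-invariant in the $x'$-plane but the operator $\overline{a^{ij}}\partial_i\partial_j$ is not (unless $A_2$ is a multiple of $I_2$), so $\partial_\theta u$ solves \eqref{equi equation} only up to source terms of the form $\frac{1}{\rho}\widetilde{D}_i\big(\rho^{1-2\alpha}(\partial_\theta b^{ij})\widetilde{D}_j u\big)$. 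Feeding these into the barrier comparison of Lemma \ref{lem3.5} is not a routine verification: at a stage where the known decay is $|u|\leq C\rho^{\beta}$ and $|Du|\leq C\rho^{\beta-1}$, this inhomogeneity is of size $\rho^{\beta-2\alpha-2}$, which is exactly the same order as the supersolution defect $\mathrm{div}_y(\rho^{-2\alpha}H\nabla_y m)\leq C\beta(\beta-2\alpha)\rho^{\beta-2\alpha-2}$, and the latter degenerates as $\beta\to 2\alpha$; whether the comparison closes is a matter of constants that your sketch does not address. The radial step is also circular as written: \eqref{f} only gives $|\widetilde{D}u|\leq C\rho^{\alpha-1}$, and integrating \eqref{t} with that input reproduces the exponent $\alpha-1$ (as in \eqref{differ}), not $2\alpha-1-\varepsilon$; improving it requires precisely the sharpened tangential bounds whose proof is the unresolved point above.

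The paper closes both issues at once by a shorter mechanism that never touches the equation satisfied by $\partial_\theta u$. For $x_0\in Q_{1/2}\backslash\Gamma$ set $r=\frac{1}{2}\rho(x_0)$, so that $Q_r(x_0)\cap\Gamma=\varnothing$; apply the rescaled \emph{interior} estimate (Proposition \ref{prop3.2} combined with Lemma \ref{lem3.1}) on $Q_r(x_0)$ and then insert the sup bound $|u|\leq C_\varepsilon\rho^{2\alpha-\varepsilon}$ from Lemma \ref{lem3.5} into the resulting mean-square average. This yields $|Du(x_0)|\leq C_\varepsilon\rho(x_0)^{2\alpha-1-\varepsilon}$ directly, which is the radial estimate; and since $\partial_\theta u=\rho$ times the angular component of $Du$, the bound $|\partial_\theta u|\leq C_\varepsilon\rho^{2\alpha-\varepsilon}$ comes for free with the extra factor of $\rho$. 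Higher-order derivatives are then obtained by noting that the $x''$-derivatives satisfy the same equation and iterating with Lemma \ref{lem3.1}. I recommend reworking your argument along these lines rather than pushing the barrier through the inhomogeneous equation for $\partial_\theta u$.
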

\begin{proof} 
 For any $x_0 \in Q_{1/2}\backslash\Gamma$, denote $r=\frac{1}{2}\rho(x_0)$.  
By Proposition \ref{prop3.2} and Lemma \ref{lem3.1}, we can see that 
\begin{equation*}\begin{aligned}
|\rho^{\gamma}D u(x_0)| &\leq C\left(\dashint_{Q_{2r/3(x_0)}}(|\rho^{\gamma}Du|^2+\lambda |\rho^{\gamma}u|^2)\mathrm{d}\mu_\sigma \right)^{1/2}\\&\leq Cr^{-1}\left(\dashint_{Q_{4r/3(x_0)}} |\rho^{\gamma}u|^2\mathrm{d}\mu_\sigma \right)^{1/2}. 
\end{aligned}\end{equation*}
Given that $\rho(x)\leq \frac{5}{6}$ in $Q_{4r/3}(x_0)$, Lemma \ref{lem3.5} implies that $|u|\leq C_{\varepsilon} \rho^{2\alpha-\varepsilon}\left(\dashint_{Q_{2}}|u|^2\rho^{-2\alpha}\mathrm{d}x\right)^{1/2}$  holds in $Q_{4r/3}(x_0)$ for any $\varepsilon>0$. Thus, \begin{equation*}\begin{aligned}
|\rho^{\gamma}D u(x_0)| &\leq C_\varepsilon r^{-1}\left(\int_{Q_{4r/3(x_0)}}\mathrm{d}\mu_\sigma\right)^{-1/2}\left(\int_{Q_{4r/3(x_0)}} \rho^{2\alpha-2\varepsilon}\mathrm{d}x\right)^{1/2}\left(\dashint_{Q_2}|u|^2\rho^{-2\alpha}\mathrm{d}x\right)^{1/2} \\& \leq C_{\varepsilon}r^{-1}\rho^{\alpha+\gamma}(x_0)\rho^{\alpha-\varepsilon}(x_0)\left(\dashint_{Q_{2}}|u|^2\rho^{-2\alpha}\mathrm{d}x\right)^{1/2}\\& = C_{\varepsilon}\rho^{2\alpha-1+\gamma-\varepsilon}(x_0)\left(\dashint_{Q_{2}}|u|^2\rho^{-2\alpha}\mathrm{d}x\right)^{1/2}, 
\end{aligned}\end{equation*}where $C_\varepsilon$ is independent of $x_0$. That is \begin{equation}\label{gr}|\partial_\rho u(x)|+\rho^{-1}|\partial_\theta u(x)| \leq C_{\varepsilon}\rho^{2\alpha-1-\varepsilon}(x)\left(\dashint_{Q_{2}}|u|^2\rho^{-2\alpha}\mathrm{d}x\right)^{1/2}\quad \mbox{in}\, Q_{1/2}.
\end{equation}
 Observe that $\partial_k u$ $(k=3,\cdots,n)$ satisfy the same equation and zero boundary condition  as $u$, namely: \begin{equation}\begin{aligned}
	\partial_i(\rho^{-2\alpha}\overline{A}\partial_j \partial_{k}u)&=\lambda \partial_{k}u ,\quad k=3,\cdots,n.
\end{aligned}\end{equation} Thus, since the inequality  (\ref{gr}) remains valid  for $\partial_{k}u$, we can deduce, by applying Lemma \ref{lem3.1}, that in $Q_{1/2}\backslash\Gamma$, the following estimate holds:
\begin{equation*}\rho|D D_{x''}^{k'} u(x)|+|D_{x''}^{k'} u(x)| \leq C_{\varepsilon}\rho^{2\alpha-\varepsilon}(x)\left(\dashint_{Q_{2}}|u|^2\rho^{-2\alpha}\mathrm{d}x\right)^{1/2}.
\end{equation*}
\end{proof}

Finally, we remove the $\varepsilon$ and obtain the sharp version. 

\begin{prop}\label{optimal}
Suppose that $u\in \mathring{W}^{1,2}(Q_1,\mathrm{d}\mu)$ is a weak solution of \eqref{ho equation}.  Then for any integer $k\ge 0$, we have
	\begin{equation}\label{op}
		\sup_{Q_{1/2}\backslash\Gamma}\rho^{-2\alpha}|D^k_{\theta x''}u|+\sup_{Q_{1/2}\backslash\Gamma}\rho^{1-2\alpha}|Du|\leq C \left(\dashint_{Q_1}|u|^2\rho^{-2\alpha}\mathrm{d}x\right)^{1/2},
	\end{equation}where $C=C(n,\alpha,\kappa)$.
\end{prop}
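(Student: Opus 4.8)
The plan is to upgrade the almost-sharp estimate in Corollary \ref{cor} (which gives the bound with the loss $\rho^{-2\alpha+\varepsilon}$) to the sharp one by running a barrier argument with an exponent that sits exactly at $2\alpha$ rather than strictly below it. First I would reduce, exactly as in the proof of Lemma \ref{lem3.5}, to the equation \eqref{eq3} with $H=(h^{ij})$ satisfying $h^{ij}\partial_i\rho\,\partial_j\rho=b(\theta)>0$ for $i,j=1,2$, so that the first-two-variable block of the operator, when applied to a pure power $\rho^\beta$, produces the term $\beta(\beta-1)b(\theta)\rho^{\beta-2}$ plus lower-order contributions coming from $\partial_{ij}\rho$ and from the $x''$-directions. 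The point of taking $\beta=2\alpha$ is that the would-be supersolution $\rho^{2\alpha}$ is, up to the cylindrical first-order correction, a near-solution of the homogeneous operator, and the error terms can be absorbed using that $u$ itself is already known (from Corollary \ref{cor}) to be bounded by $C_\varepsilon\rho^{2\alpha-\varepsilon}$, which is better than $\rho^{2\alpha-2}$ for $\varepsilon$ small.

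The key steps, in order: (1) set up the barrier $m(y)=\rho^{2\alpha}+\rho^{2\alpha-\delta}|y-y_0|^2$ for a small $\delta>0$, compute $\mathrm{div}_y(\rho^{-2\alpha}H\nabla_y m)$ term by term as in Lemma \ref{lem3.5}; the dominant piece is $2\alpha(2\alpha-\delta-1)b(\theta)\rho^{-\delta-2}+2n\rho^{-\delta}$-type terms, but the crucial negative term now comes from the $-\delta$-shift in the $|y-y_0|^2$ factor, giving something like $-c\,\delta\,\rho^{-2\alpha-\delta+2\alpha-?}$, which I expect to dominate in a small ball $Q_{r_1}$; (2) check that this divergence is $\le -C\lambda\rho^{-2\alpha}|u|$ in $Q_{r_1}$ using the a priori bound $|u|\le C_\varepsilon\rho^{2\alpha-\varepsilon}$ from Corollary \ref{cor} (here $\varepsilon<\delta$ suffices so that $\lambda\rho^{-2\alpha}|u|=O(\rho^{-\varepsilon})$ is lower order than the negative barrier term $\rho^{-\delta}$); (3) since $m\equiv 0$ on $\Gamma$, apply the comparison principle on $Q_c(y_0)\setminus\Gamma$: on $\partial Q_c(y_0)$ we have $|u|\le c^{-2}\|\rho^{-2\alpha+\varepsilon}u\|_{L^\infty}\rho^{2\alpha-\varepsilon}\le c^{-2}m\,\|\rho^{-2\alpha+\varepsilon}u\|_{L^\infty}$ after shrinking, so $|u(y_0)|\le c^{-2}\rho^{2\alpha}(y_0)\|\rho^{-2\alpha+\varepsilon}u\|_{L^\infty}$; letting $y_0$ range over $Q_{r_2}$ gives $|u|\le C\rho^{2\alpha}$ there, and the interior estimate (Proposition \ref{prop3.2}, Lemma \ref{lem3.5}) extends it to $Q_{1/2}$; (4) pass to $\partial_k u$, $k=3,\dots,n$, which solve the same equation with zero trace, to get $|D^k_{x''}u|\le C\rho^{2\alpha}$, and to $\partial_\theta u$ analogously via the difference-quotient machinery of Lemma \ref{lem3.1}, obtaining the $D^k_{\theta x''}$ bound; (5) for the gradient term $\rho^{1-2\alpha}|Du|$, combine the now-sharp bound $|u|\le C\rho^{2\alpha}$ with the interior gradient estimate: for $x_0$ with $r=\tfrac12\rho(x_0)$, Proposition \ref{prop3.2} gives $|\rho^\gamma Du(x_0)|\le Cr^{-1}(\dashint_{Q_{4r/3}(x_0)}|\rho^\gamma u|^2\mathrm{d}\mu_\sigma)^{1/2}$, and inserting $|u|\le C\rho^{2\alpha}$ on that ball yields $|Du(x_0)|\le C\rho^{2\alpha-1}(x_0)(\dashint_{Q_1}|u|^2\rho^{-2\alpha}\mathrm{d}x)^{1/2}$, exactly as in Corollary \ref{cor} but with $\varepsilon=0$; apply the same to $\partial_k u$ to cover all components of $Du$.

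The main obstacle I anticipate is step (1)–(2): at $\beta=2\alpha$ the leading elliptic contribution of $\rho^{2\alpha}$ to the homogeneous operator $\mathrm{div}(\rho^{-2\alpha}H\nabla\cdot)$ does not have a definite sign on its own (the analogue of $\beta(\beta-2\alpha)$ vanishes), so the strict negativity of the barrier's divergence must be extracted entirely from the $\rho^{-\delta}$-shifted quadratic term $\rho^{2\alpha-\delta}|y-y_0|^2$ together with the $\delta_{ij}$-term $2\rho^{2\alpha-\delta}$, and one has to verify that the genuinely dangerous terms — the ones of size $\rho^{2\alpha-2}$ coming from $\partial_{ij}\rho$ in the $x''$-directions and from $\beta(\beta-1)\rho^{2\alpha-2}\partial_i\rho\,\partial_j\rho$ — are controlled, either because $h^{ij}\partial_i\rho\,\partial_j\rho$ only sees $i,j\le 2$ where $\partial_{ij}\rho$ behaves like $\rho^{-1}$ (so those terms are genuinely $\rho^{2\alpha-2}$ and must be beaten by choosing the ball radius $r_1$ small and $\delta$ appropriately), or by absorbing them into the zeroth-order term using the sharper a priori bound. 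Getting the bookkeeping of exponents right so that, in a fixed small cylinder $Q_{r_1}$, one has $\mathrm{div}_y(\rho^{-2\alpha}H\nabla_y m)\le -C\lambda\rho^{-2\alpha}|u|$ with a constant uniform in $\lambda$ is the delicate point; everything after the comparison principle fires is routine bootstrapping already carried out in Lemma \ref{lem3.5} and Corollary \ref{cor}.
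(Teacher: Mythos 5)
Your steps (4)--(5) are fine and coincide with what the paper does, but the core of your argument --- steps (1)--(3), the barrier at the endpoint $\beta=2\alpha$ --- has a genuine gap, and it is exactly the one you flagged. Redo the computation from Lemma \ref{lem3.5} with $\beta=2\alpha$ and $m=\rho^{2\alpha}+\rho^{2\alpha-\delta}|y-y_0|^2$. Since $h^{ij}\partial_i\rho\,\partial_j\rho=b(\theta)$ and $h^{ij}\partial_{ij}\rho=b(\theta)/\rho$, the pure power $\rho^{2\alpha}$ contributes $\beta(\beta-2\alpha)b(\theta)\rho^{\beta-2\alpha-2}=0$: indeed $\rho^{2\alpha}$ is an exact solution of $\mathrm{div}_y(\rho^{-2\alpha}H\nabla_y\,\cdot)=0$ (it is the $k=0$ indicial solution). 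The quadratic correction then contributes $-\delta(2\alpha-\delta)b(\theta)\rho^{-\delta-2}|y-y_0|^2$ (good, negative), cross terms of size $\rho^{-\delta-1}|y-y_0|$, \emph{and} the term $2\rho^{-2\alpha}h^{ij}\rho^{2\alpha-\delta}\delta_{ij}=2\,\mathrm{tr}(H)\,\rho^{-\delta}$, which is \emph{positive} --- not negative as your outline assumes. After absorbing the cross terms by Young's inequality you are left with
$\mathrm{div}_y(\rho^{-2\alpha}H\nabla_y m)\le -c\,\delta\,\rho^{-\delta-2}|y-y_0|^2+C\rho^{-\delta}$,
which is positive precisely in the region $|y-y_0|\lesssim\rho(y)$, i.e.\ exactly where you need to evaluate the comparison at $y=y_0$. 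The zeroth-order term $-\lambda\rho^{-2\alpha}m\approx-\lambda$ cannot rescue this, since $\rho^{-\delta}\to\infty$ near $\Gamma$. So $m$ is not a supersolution and the comparison principle does not fire. This is not a bookkeeping issue that a better choice of $\delta$ or $r_1$ fixes: the whole mechanism of Lemma \ref{lem3.5} is that the strictly negative term $\beta(\beta-2\alpha)b\rho^{\beta-2\alpha-2}$, which is $\rho^{-2+\delta}$ times larger than every error, dominates; at $\beta=2\alpha$ that term vanishes identically and there is no replacement. Morally, $|u|\le o(\rho^{2\alpha})$ is simply false for a generic solution (the $\rho^{2\alpha}$ mode is present), so no supersolution-type argument built on $\rho^{2\alpha-\delta}$-corrections can close the endpoint.

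The paper's proof takes a different route that you did not attempt: after the diagonalization leading to \eqref{eq3}, it writes the equation in cylindrical coordinates, uses Corollary \ref{cor} to show the non-radial/angular part of the operator applied to $u$ is $O(\rho^{2\alpha-1-\varepsilon})$, substitutes $t=-\ln\rho$, and expands $u(t,\theta,x'')=\sum_k a_k(t)\phi_k(\theta)$ in eigenfunctions of the self-adjoint operator $L=-\tfrac1b\partial_\theta(b\,\partial_\theta)$ on $L^2([0,2\pi],b)$. Each coefficient solves $a_k''+2\alpha a_k'-\lambda_k a_k=O(e^{-(2\alpha+1-\varepsilon)t})$, whose decaying indicial root is $-\alpha-\sqrt{\alpha^2+\lambda_k}\le-2\alpha$; this gives $|u|\le C\rho^{2\alpha}$ with no $\varepsilon$-loss. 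If you want to keep a comparison-type flavor, you would need something that sees this spectral gap (e.g.\ an improvement-of-oscillation at dyadic scales against the exact solution $\rho^{2\alpha}$), but the barrier $\rho^{2\alpha}+\rho^{2\alpha-\delta}|y-y_0|^2$ as written does not work.
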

\begin{proof}
As in the proof Lemma \ref{lem3.5}, we diagonalize $A_2$ and obtain the  equation (\ref{eq3}).
	Denote $\widetilde{H}=(\widetilde{h}^{ij})=T'HT$ with $T$ as in \eqref{eq:T}.  Hence, 
	\begin{equation*}
\begin{aligned}
& b(\theta) \partial^{2}_{\rho} u + b(\theta)\dfrac{1}{\rho^{2}} \partial^{2}_{\theta} u  +\frac{1}{\rho^2}\partial_\theta b(\theta)\partial_\theta u + \dfrac{1-2\alpha}{\rho} b(\theta)\partial_\rho u \\ =& \lambda  u- \sum_{j=3}^{n}(\widetilde{h}^{1j}D_j\partial_\rho u+2\alpha \rho^{-1}\widetilde{h}^{1j}D_ju-\rho^{-1}\partial_\theta \widetilde{h}^{2j}D_ju+ \rho^{-1}\widetilde{h}^{2j}D_j\partial_{\theta}u)-\sum_{i=3}^{n}\widetilde{h}^{ij}\widetilde{D}_jD_iu.
\end{aligned}
\end{equation*}
By Corollary \ref{cor}, we have 
\begin{equation*}
 	  b(\theta)\partial^{2}_{\rho}u + b(\theta)\dfrac{1}{\rho^{2}} \partial^{2}_\theta u +b(\theta)\dfrac{1-2\alpha}{\rho}  \partial_\rho u+\frac{1}{\rho^2}\partial_\theta b(\theta)\partial_\theta u=O(\rho^{2\alpha-1-\varepsilon}).
\end{equation*}
We shall use an ODE argument. Let $t=-\ln \rho$.  Then we have 
\begin{equation*}
\pa_\rho u = \pa_t u  \frac{\ud t}{\ud \rho}=  -e^{t } \pa_t u ,  
\end{equation*}
\begin{align*}
\pa_\rho^2 u = \pa_t(-e^{t } \pa_t u) \frac{\ud t}{\ud \rho}=e^{2t} (\pa_t u +\pa_t^2 u) ,  
\end{align*}
and thus 
\[
b(\theta)\pa_t^2 u +2\alpha b(\theta)\pa_t u + \pa_{\theta}(b(\theta) \pa_{\theta}u) = O(e^{-(2\alpha +1-\va) t }),
\]
where $O(\cdot)$ is uniform for $x''\in [-1,1]^{n-2}$.

For any $\rho\in (0,1)$ and $x''\in [-1,1]^{n-2}$,
we define  $L^2([0,2\pi],b)$ as a space endowed with the inner product \begin{equation}\label{pa}\langle u,v\rangle=\int_0^{2\pi}u(\theta) v(\theta) b(\theta) \mathrm{d}\theta\end{equation} and the corresponding norm \[\|u\|_{L^2([0,2\pi],b)}=\left(\int_{0}^{2\pi} |u(\theta)|^2b(\theta)\mathrm{d}\theta\right)^{1/2}.\] 
The space is  Hilbert  since $b(\theta)$ is a positive and smooth scalar function defined on $[0,2\pi]$. We also denote $H^1([0,2\pi],b)=\{u\,|\,u\in L^2([0,2\pi],b),\partial_\theta u \in L^2([0,2\pi],b)\}$, where $\partial_\theta u$ denotes the weak derivative of $u$ in the usual sense. We define 
\[C^\infty_\theta([0,2\pi])= \{\psi\in C^\infty([0,2\pi])\,|\, \psi 
\text{ is }2\pi\text{-periodic}\},\] and \[H^1_\theta([0,2\pi])=\mbox{closure of } C^\infty_{\theta}([0,2\pi]) \mbox{ in } H^1([0,2\pi],b).\]
Notice that $b(0)=b(2\pi)$, we obtain
\begin{equation}\label{dec}\int_0^{2\pi}\partial_\theta(b\partial_\theta u)v\mathrm{d}\theta=-\int_0^{2\pi}b\partial_\theta u\partial_\theta v\mathrm{d}\theta,\quad u,v \in C^\infty_\theta([0,2\pi]).\end{equation}
By approximation, we find \eqref{dec} holds with $C^\infty_\theta([0,2\pi])$ replaced by  $H^1_\theta([0,2\pi])$.


Given that  the function $u$ is defined on the interval $[0,2\pi]$ with respect to the angular variable $\theta$, it naturally follows that if $u\in \mathring{W}^{1,2}(Q_1,\mathrm{d}\mu)$, then $u\in H^1_\theta([0,2\pi])$  for any $\rho\in (0,1)$ and $x''\in [-1,1]^{n-2}$. Let the operator
 $L:=-\frac{1}{b}\partial_\theta(b\partial_\theta)$, with domain   
 $D(L)=H^2([0,2\pi])\cap H^1_\theta([0,2\pi])$. Then $L$ maps $D(L)$, which is a subset of $L^2([0,2\pi],b)$, into $ L^2([0,2\pi],b)$. 
By \eqref{dec}, we observe that for all $u, v\in D(L)$, 
 \begin{equation*}\langle Lu, v\rangle=\langle u,Lv\rangle.\end{equation*}
Hence, $L$ is symmetric.

Define the bilinear form $B:H^1_\theta([0,2\pi])\times H^1_\theta([0,2\pi])\rightarrow \mathbb{R} $ associated with 
\[B(u,v)=\int_0^{2\pi}b\partial_\theta u \partial_\theta v\mathrm{d}\theta + \int_0^{2\pi}b u  v\mathrm{d}\theta. \]
A straightforward calculation shows that  $B$ is bounded and coercive. Hence, the Lax-Milgram theorem applicable.  Combining this with the standard regularity theory for elliptic equations, we  conclude that for any $f\in L^2([0,2\pi],b)$, there exists a unique weak solution $u\in D(L)$ satisfying $(L+1)u=f$. Consequently, $L+1$ is surjective, and thus  $R(L+1)=L^2([0,2\pi],b)$. It follows that $L$ is a maximal monotone operator. According to Proposition 7.6. in \cite{B}, we conclude that $L$ is self-adjoint.

Let  $\Sigma=\{\lambda_k\}_{k=0}^{\infty}$ denote the set of eigenvalues 
$L$. We then have \begin{equation}\label{eigenvalue}0=\lambda_0< \lambda_1\leq\lambda_2\leq\lambda_3\leq \cdots, \,\text{and}\,\lambda_k\rightarrow \infty \, \text{as}  \, k\rightarrow \infty.\end{equation}
Also there exists an orthonormal basis $\{\phi_k\}_{k=0}^\infty$ of $L^2([0,2\pi],b)$, where
$\phi_k\in H^1_\theta([0,2\pi],b)$  is an eigenfunction corresponding to $\lambda_k$ :
\begin{equation*}\begin{cases}
-\frac{1}{b}\partial_\theta(b\partial_\theta \phi_k)=\lambda_k \phi_k \\ \phi_k(0)=\phi_k(2\pi). 
\end{cases}\end{equation*}
Owing to the regularity theory, we have $\phi_k\in C^\infty_\theta([0,2\pi])$ and thus $\phi_k$ is bounded in $[0,2\pi]$.

Thus, for any fixed $x''$, we can expand 
\[
 u (t,\theta,x'')= \sum_{k=0}^{\infty} a_k(t) \phi_k(\theta),
\] where the $x''$-dependence is omitted in the coefficients for simplicity.
It follows that 
\begin{align}\label{F}
&a_k''+2\alpha a_k'-\lambda_k a_k=O(e^{-(2\alpha +1-\va) t }),
\end{align}
where $O(\cdot)$ is uniform for $x''\in  [-1,1]^{n-2}$.

Since $|u |=O(e^{-(2\alpha-\va )t})$, the decay rates for $a_k$ will be determined  by the roots 
\(-\alpha -\sqrt{\alpha^2+\lambda_k}. 
\) 
That is $u(t)=C_k e^{-(\alpha +\sqrt{\alpha^2+\lambda_k})t}+O(e^{-(2\alpha+1-\varepsilon)t})$. 
It's easy to see that \(-\alpha -\sqrt{\alpha^2+\lambda_k}\leq -2\alpha 
\) by (\ref{eigenvalue}).
Here, there exists different optimal regularity depend on whether $\alpha$ is an integer or not. However, we are not concerned with this distinction, and  generally, we can obtain $|u|\leq C\rho^{2\alpha}$ within $Q_{1/2}$. 
By following the same procedure as in Corollary \ref{cor}, we obtain (\ref{op}).
\end{proof}

Before proceeding, we present equalities derived through the scaling under the settings \eqref{sc0}-\eqref{eq:rescaled}. Given that $x_0=0$, we obtain 
\begin{equation}\begin{aligned}\label{sc}
r^{-4\alpha+2\sigma+2}\dashint_{Q_1}|\rho^{1+\sigma-2\alpha}\overline{u}|^2\mathrm{d}\overline{\mu}_\sigma &=C\dashint_{Q_{r}}|\rho^{1+\sigma-2\alpha}u|^2\mathrm{d}\mu_\sigma, \\ r ^{-4\alpha+2\sigma}	\dashint_{Q_1}|\rho^{1+\sigma-2\alpha}D\overline{u}|^2\mathrm{d}\overline{\mu}_\sigma &=C\dashint_{Q_{r}}|\rho^{1+\sigma-2\alpha}Du|^2\mathrm{d}\mu_\sigma,
\end{aligned}
\end{equation}
where $C=C(n,\alpha,\sigma)$.
\begin{cor}\label{Holder}
For $r>0$ and $\sigma\in (0,\frac{1}{2})$. Assume $u\in \mathring{W}^{1,2}(Q_{2r},\mathrm{d}\mu)$ is a weak solution of (\ref{ho equation}). Then we have
\begin{equation*}
r^{\sigma}[\rho^{\gamma}u]_{C^{\sigma}(Q_r\backslash\Gamma)}+ r^{\sigma+1}[\rho^{\gamma}Du]_{C^{\sigma}(Q_{r}\backslash\Gamma)}\leq C \left(\dashint_{Q_{2r}}|\rho^{\gamma}u|^2\mathrm{d}\mu_\sigma \right)^{1/2},
\end{equation*}
where $C=C(n,\alpha,\kappa)$.
\end{cor}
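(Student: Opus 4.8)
\textbf{Proof plan for Corollary \ref{Holder}.}
The plan is to reduce the statement to the scale-invariant case $r=1$ by the change of variables \eqref{sc0}, and then to combine the two regimes already treated: the interior Hölder estimate of Proposition \ref{prop3.2} away from $\Gamma$, and the sharp weighted sup-bounds near $\Gamma$ coming from Proposition \ref{optimal}. Concretely, set $\overline u(\overline x)=u(x)$ with $x=r\overline x$; by \eqref{sc3} and \eqref{sc} it suffices to prove
\[
[\rho^{\gamma}\overline u]_{C^{\sigma}(Q_1\setminus\Gamma)}+[\rho^{\gamma}D\overline u]_{C^{\sigma}(Q_1\setminus\Gamma)}\le C\Big(\dashint_{Q_2}|\rho^{\gamma}\overline u|^2\,\mathrm d\overline\mu_\sigma\Big)^{1/2},
\]
where $\overline u$ solves \eqref{ho equation} with $\overline\lambda=r^2\lambda\ge 0$ in $Q_2$ (we may assume $Q_2\cap\Gamma\ne\varnothing$, i.e. $x_0=0$, since the case $Q_r(x_0)\cap\Gamma=\varnothing$ is exactly Proposition \ref{prop3.2} after the same rescaling, using $\rho^{2\gamma}\mathrm d\mu_\sigma=\mathrm d\mu$ to absorb the $\lambda$-term into the right-hand side).

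Next I would set up a standard dyadic/Whitney decomposition of $Q_1\setminus\Gamma$: for $x\in Q_1\setminus\Gamma$ let $d(x)=\rho(x)$ be the distance to $\Gamma$, and note that on each ball $Q_{d(x)/8}(x)$ the equation is uniformly elliptic with smooth coefficients. The two facts I need are: (a) the \emph{pointwise} bounds $\rho^{-2\alpha}|D^k_{\theta x''}\overline u|+\rho^{1-2\alpha}|D\overline u|\le C(\dashint_{Q_1}|\overline u|^2\rho^{-2\alpha}\mathrm dx)^{1/2}$ from Proposition \ref{optimal} (after a harmless extra rescaling from $Q_1$ to $Q_2$), which in particular give $|\rho^\gamma\overline u|+|\rho^\gamma D\overline u|\le CM$ on all of $Q_1\setminus\Gamma$ with $M$ the right-hand side; and (b) the \emph{interior} Hölder bound of Proposition \ref{prop3.2} applied on $Q_{d(x)/2}(x)$, giving
\[
d(x)^{\sigma}\,[\rho^\gamma \overline u]_{C^\sigma(Q_{d(x)/4}(x))}+d(x)^{\sigma}\,[\rho^\gamma D\overline u]_{C^\sigma(Q_{d(x)/4}(x))}\le C\Big(\dashint_{Q_{d(x)}(x)}(|\rho^\gamma D\overline u|^2+\overline\lambda|\rho^\gamma\overline u|^2)\,\mathrm d\mu_\sigma\Big)^{1/2}.
\]
The point is that the right-hand side here is controlled, via Proposition \ref{optimal} and the elementary Caccioppoli-type bound for $\overline\lambda|\overline u|^2$ (or by noting $\overline\lambda\,\rho^{2\gamma}=\overline\lambda\,d^2\cdot\rho^{2\gamma-2}d^{-2}$ and $\overline\lambda d^2\le C$ on $Q_1$ after requiring $r^2\lambda$ bounded on the relevant scale — more cleanly, one simply invokes Proposition \ref{optimal} which already absorbs the $\lambda$-term), by $CM\cdot d(x)^{2\alpha-1+\gamma}/d(x)^{2\alpha-1+\gamma}=CM$; that is, the localized seminorm of $\rho^\gamma\overline u$ and $\rho^\gamma D\overline u$ on $Q_{d(x)/4}(x)$ is at most $C M\, d(x)^{-\sigma}$.

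Finally I would glue the local seminorm bounds into a global one. For two points $x,y\in Q_1\setminus\Gamma$: if $|x-y|\le \tfrac18\max(d(x),d(y))$ they lie in a common Whitney ball and the local estimate gives $|\rho^\gamma\overline u(x)-\rho^\gamma\overline u(y)|\le CM d^{-\sigma}|x-y|^\sigma\le CM|x-y|^\sigma$ (and similarly for $D\overline u$), using that $d(x)\simeq d(y)$; if $|x-y|\ge \tfrac18\max(d(x),d(y))$ then, since $\rho^\gamma\overline u\to 0$ as $d\to 0$ at a rate — here is where the sharp exponent matters — $|\rho^\gamma\overline u(x)|=|\rho(x)^{\gamma+2\alpha}|\cdot|\rho^{-2\alpha}\overline u(x)|\le CM d(x)^{\gamma+2\alpha}=CM d(x)^{1+\sigma}\le CM d(x)^{\sigma}\le CM|x-y|^\sigma$, and likewise $|\rho^\gamma D\overline u(x)|\le CM d(x)^{\gamma+2\alpha-1}=CM d(x)^{\sigma}\le CM|x-y|^\sigma$, so the triangle inequality closes the estimate; interior points far from each other but with comparable distances are handled by chaining through a bounded number of Whitney balls along a path in $Q_1\setminus\Gamma$. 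Undoing the rescaling via \eqref{sc3} and \eqref{sc} yields the claimed inequality with the stated $r$-powers.

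The main obstacle is the far-apart case near the boundary: the Hölder continuity of the weighted quantities up to $\Gamma$ is \emph{not} a consequence of interior estimates alone, and it hinges precisely on the fact that the exponents $\gamma+2\alpha=1+\sigma$ and $\gamma+2\alpha-1=\sigma$ are $\ge\sigma$, i.e. on the \emph{sharp} (not merely $\varepsilon$-loss) decay supplied by Proposition \ref{optimal}; with the $\varepsilon$-version of Corollary \ref{cor} one would only get $d^{\sigma-\varepsilon}$ which is insufficient. One must also check that the $\overline\lambda$-dependent terms arising from Proposition \ref{prop3.2} do not degrade the bound — this is where using Proposition \ref{optimal} (which has already dispatched the zeroth-order term) rather than a bare energy inequality keeps the constants uniform in $\lambda$.
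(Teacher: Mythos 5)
Your argument is correct in substance but takes a genuinely different route from the paper's for the gradient part. You reduce to $r=1$ by the same scaling, and you prove the boundary H\"older bound by a Whitney-type decomposition: interior $C^\sigma$ estimates (Proposition \ref{prop3.2}) on cubes $Q_{d(x)/4}(x)$, combined with the \emph{sharp} decay $|u|\le C\rho^{2\alpha}M$, $|Du|\le C\rho^{2\alpha-1}M$ from Proposition \ref{optimal}, so that the factor $d(x)^{-\sigma}$ from the interior estimate is exactly cancelled and the local seminorms are uniformly $\le CM$; the far-apart case is handled by bounding each term by $CMd^{\sigma}\le CM|x-y|^\sigma$. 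This is the standard ``interior estimate $+$ boundary decay $\Rightarrow$ global H\"older'' scheme, and your diagnosis that the $\varepsilon$-loss version (Corollary \ref{cor}) would be insufficient is exactly right. The paper instead argues directly in cylindrical coordinates: for $\rho^\gamma u$ it uses Proposition \ref{optimal} to bound the full gradient of $\rho^\gamma u$ and interpolates; for $\rho^\gamma Du$ it works with the conormal quantity $\mathcal U=\sum_j b^{1j}\widetilde D_j u$, uses the equation to control $\partial_\rho(\rho^{1-2\alpha}\mathcal U)$, and applies the one-dimensional Sobolev embedding $W^{1,2}((0,1))\hookrightarrow C^{1/2}$ in $\rho$ together with Sobolev embedding in $(\theta,x'')$. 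Both proofs hinge on the same key input (Proposition \ref{optimal}); yours is more robust and avoids the coordinate-specific manipulation of $\mathcal U$, while the paper's stays entirely within the ODE/cylindrical framework already set up.

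Two small points to fix. First, your sentence ``the localized seminorm \dots is at most $CM\,d(x)^{-\sigma}$'' contradicts the computation you display just before it (which correctly yields $CM$); if the local seminorm really were only $CM d^{-\sigma}$, the near-case gluing $|f(x)-f(y)|\le CMd^{-\sigma}|x-y|^\sigma\le CM|x-y|^\sigma$ would fail as $d\to0$, so you should state the cancellation explicitly and keep the bound $CM$. Second, Proposition \ref{optimal} does \emph{not} by itself control the term $\overline\lambda|\rho^\gamma\overline u|^2$ appearing in the right-hand side of Proposition \ref{prop3.2}; your first-mentioned option is the correct one: combine the Caccioppoli inequality of Lemma \ref{lem3.1} on $Q_{d(x)}(x)$ with the decay $|\overline u|\le CM\rho^{2\alpha}$ to get $\overline\lambda\dashint_{Q_{d(x)}(x)}|\rho^\gamma\overline u|^2\,\mathrm d\mu_\sigma\le CM^2d(x)^{2\sigma}$, uniformly in $\overline\lambda$.
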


\begin{proof}
By the scaling argument (\ref{sc3}) and \eqref{sc}, we only need to prove that
\begin{equation*}
[\rho^{1+\sigma-2\alpha}u]_{C^{\sigma}(Q_1\backslash\Gamma)}+ [\rho^{1+\sigma-2\alpha}Du]_{C^{\sigma}(Q_{1}\backslash\Gamma)}\leq C \left(\dashint_{Q_{2}}|u|^2\rho^{-2\alpha}\mathrm{d}x\right)^{1/2}.
\end{equation*}
By the interpolation inequalities and Lemma \ref{optimal}, we have
\begin{equation*}\begin{aligned}
\relax[\rho^{1+\sigma-2\alpha}u]_{C^{\sigma}(Q_1\backslash\Gamma)} & \leq C\|\rho^{1+\sigma-2\alpha}u\|_{C^{0}(Q_1)}+C\|\rho^{1+\sigma-2\alpha}Du\|_{C^{0}(Q_1)}\\ &\leq C\|u\|_{L^2(Q_{2},\mathrm{d}\mu)}.
\end{aligned}\end{equation*}
Recall that $\mathcal{U}=b^{1j}\widetilde{D}_ju$ and $x_i=(\rho_i,\theta_i,x_i'')$.
By the triangle inequality, for any $\sigma \in (0,\frac{1}{2})$, we have
\begin{equation*}\begin{aligned}
\left[\rho^{1+\sigma-2\alpha}\mathcal{U}\right]_{C^{\sigma}(Q_1\backslash\Gamma)}=&\sup_{x_1\neq x_2\in Q_1\backslash\Gamma }\frac{|\rho_1^{1+\sigma-2\alpha}\mathcal{U}(x_1)-\rho_2^{1+\sigma-2\alpha}\mathcal{U}(x_2)|}{|x_1-x_2|^{\sigma}}\\ \leq &\sup_{x_1\neq x_2\in Q_1\backslash\Gamma }\frac{|\rho_1^{1+\sigma-2\alpha}\mathcal{U}(\rho_1,\theta_1, x''_1)-\rho_2^{1+\sigma-2\alpha}\mathcal{U}(\rho_2,\theta_1, x''_1)|}{|\rho(x_1)-\rho(x_2)|^{\sigma}}\\&+\sup_{x_1\neq x_2\in Q_1\backslash\Gamma }\frac{|\rho_2^{1+\sigma-2\alpha}\mathcal{U}(\rho_2,\theta_1,x_1'')-\rho_2^{1+\sigma-2\alpha}\mathcal{U}(\rho_2,\theta_2,x_2'')|}{|x_1-x_2|^{\sigma}}:=I_1+I_2.
\end{aligned}\end{equation*}
By using the Sobolev embedding inequality in $\rho \in (0,1)$, we have \[I_1\leq C\sup_{(\theta,x'')\in \Theta_1}\|\rho^{1+\sigma-2\alpha}\mathcal{U}(\cdot,\theta,x'')\|_{W^{1,2}((0,1))}.\]
Using the equation, we obtain
 $$\partial_\rho(\rho^{1-2\alpha}\mathcal{U})=-\rho^{-2\alpha}(b^{2j}\widetilde{D}_j\partial_\theta{u}+\partial_\theta b^{2j}\widetilde{D}_j{u}+\rho \sum_{i=3}^{n}b^{ij}\widetilde{D}_jD_iu-\lambda \rho u).$$
By Proposition \ref{optimal}, we obtain
$$\rho|\partial_\rho(\rho^{1-2\alpha}\mathcal{U})|\leq C\|u\|_{L^2(Q_{1},\mathrm{d}\mu)}(1+\rho^2).$$
For any $(\theta,x'')\in \Theta_1$, 
\begin{equation}\begin{aligned}
&\int_0^1\left(|\rho^{1+\sigma-2\alpha}\mathcal{U}(\rho,\theta,x'')|^2+|\partial_\rho (\rho^{1+\sigma-2\alpha}\mathcal{U})(\rho,\theta,x'')|^2\right)\mathrm{d}\rho\\ \leq & C\int_0^1\left(|\rho^{1-2\alpha}\mathcal{U}(\rho,\theta,x'')|^2+|\partial_\rho (\rho^{1-2\alpha}\mathcal{U})(\rho,\theta,x'')|^2\right)\rho^{2\sigma}\mathrm{d}\rho \\ \leq & C \int_0^1 (\rho^{2\sigma-1}+\rho^{2\sigma}+\rho^{2\sigma+1})\mathrm{d}\rho \|u\|_{L^2(Q_{1},\mathrm{d}\mu)}\\ \leq &C \|u\|_{L^2(Q_{2},\mathrm{d}\mu)},
\end{aligned}\end{equation}where we used $\sigma>0$ in the last inequality. Then, we have \(I_1\leq C \|u\|_{L^2(Q_{2},\mathrm{d}\mu)}.\)

By the Sobolev embedding theorem and Lemma \ref{optimal}, we obtain \begin{equation*}\begin{aligned}I_2 &\leq C\|\rho^{1+\sigma-2\alpha}\mathcal{U}(\rho,\cdot,\cdot)\|_{W^{l,2}(\Theta_{1})}\\ &\leq C \rho^{1+\sigma-2\alpha}\sum_{k\leq l}\sup_{\Theta_{1}}|D_{\theta x''}^{k}\mathcal{U}|\\&\leq C\rho^{\sigma}\|u\|_{L^2(Q_{2},\mathrm{d}\mu)}.
\end{aligned}
\end{equation*}
Thus, we have
\begin{equation*}
[\rho^{1+\sigma-2\alpha}\mathcal{U}]_{C^{1/2}(Q_{1}\backslash\Gamma)}\leq C\|u\|_{L^2(Q_{2},\mathrm{d}\mu)}.	
\end{equation*}
By the definition of $\mathcal{U}$ and the uniform ellipticity of $B$, we have\begin{equation*}
[\rho^{1+\sigma-2\alpha}Du]_{C^{1/2}(Q_{1}\backslash\Gamma)}\leq C\|u\|_{L^2(Q_{2},\mathrm{d}\mu)}.	
\end{equation*} 
\end{proof}

 In the rest of this section, we study weak solutions to the inhomogeneous equation
\begin{equation} \label{equationrho}
-\sum_{i,j=1}^n\partial_i(\rho^{-2\alpha}(\overline{a^{ij}}\partial_ju- F_i))+\lambda \rho^{-2\alpha}u=\sqrt{\lambda}\rho^{-2\alpha}f 
\end{equation}
in $\Omega$  with boundary condition \begin{equation}\label{boundaryrhof}u=0 \quad \mbox{on } \Gamma,\end{equation}
where $\overline{a^{ij}}$ is the same as that in \eqref{ho equation}.

\begin{lem}\label{BMO}
	Let $x_0\in\mathbb{R}^n$, $\tau\ge2$, $r>0$, and $\sigma\in(0,\frac{1}{2})$. Assume that $u\in \mathring{W}^{1,2}(Q_{\tau r}(x_0),\mathrm{d}\mu)$ is a weak solution to  (\ref{equationrho})-\eqref{boundaryrhof} with $f\in L^2(Q_{\tau r}(x_0),\mathrm{d}\mu)$ and $F\in  L^2(Q_{\tau r}(x_0),\mathrm{d}\mu)^n$, then we have

\begin{equation}\label{fu}\begin{aligned}
	 &\dashint_{Q_r(x_0)}|\rho^{\gamma}Du-(\rho^{\gamma}Du)_{Q_r(x_0)}|\mathrm{d}\mu_\sigma +\sqrt{\lambda}\dashint_{Q_r(x_0)}|\rho^{\gamma}u-(\rho^{\gamma}u)_{Q_r(x_0)}|\mathrm{d}\mu_\sigma\\ \leq & C\tau ^{n+2(\alpha-1-\sigma)^+}\left(\dashint_{Q_{\tau r}(x_0)}(|\rho^{\gamma} F|^2+|\rho^{\gamma} f|^2)\mathrm{d}\mu_\sigma\right)^{1/2}\\&+C \tau^{-\sigma}\left(\dashint_{Q_{\tau r}(x_0)}(\lambda|\rho^{\gamma}u|^2+|\rho^{\gamma}Du|^2) \mathrm{d}\mu_\sigma\right)^{1/2},
	\end{aligned}\end{equation}	
where $C=C(n,\alpha,\kappa)$ 
and $(u)_{Q}=\dashint_{Q}u\mathrm{d}\mu_\sigma$.
\end{lem}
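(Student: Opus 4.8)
The plan is to split $u$ into a homogeneous part and a correction. By rescaling (as in \eqref{sc0}--\eqref{sc}) we may assume $x_0=0$ and $r=1$, so the task is to estimate the mean oscillation of $\rho^\gamma Du$ and of $\sqrt\lambda\,\rho^\gamma u$ over $Q_1$ in terms of the data on $Q_\tau$ and the energy of $u$ on $Q_\tau$. Let $w\in\mathring W^{1,2}(Q_{\tau/2},\mathrm d\mu)$ solve the homogeneous constant-coefficient equation \eqref{ho equation} in $Q_{\tau/2}\setminus\Gamma$ with $w=u$ on $\partial Q_{\tau/2}$ (equivalently $u-w\in\mathring W^{1,2}(Q_{\tau/2},\mathrm d\mu)$), obtained from Lemma \ref{whole integral} applied on $Q_{\tau/2}$ after cutting off; here we use $\tau\ge 2$ so that $Q_{\tau/2}\supset Q_1$. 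Set $v=u-w$. Then $v$ is a weak solution, with zero boundary data on $\partial Q_{\tau/2}$, of the \emph{inhomogeneous} equation with right-hand side $F,f$, and the $L^2$ energy estimate \eqref{en es} (rescaled) gives
\[
\|Dv\|_{L^2(Q_{\tau/2},\mathrm d\mu)}+\sqrt\lambda\|v\|_{L^2(Q_{\tau/2},\mathrm d\mu)}\le C\big(\|F\|_{L^2(Q_{\tau/2},\mathrm d\mu)}+\|f\|_{L^2(Q_{\tau/2},\mathrm d\mu)}\big).
\]
This is the ``bad'' part: it is small in $L^2$, controlled directly by the data, but has no extra regularity. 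The ``good'' part $w$ is a homogeneous solution to which Corollary \ref{Holder} applies on $Q_{\tau/2}\supset Q_2\supset Q_1$.

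Next I would estimate the two pieces separately. For $v$: since $\mathrm d\mu=\rho^{2\gamma}\,\mathrm d\mu_\sigma$, the above energy bound reads, after converting, $\|\rho^\gamma Dv\|_{L^2(Q_{\tau/2},\mathrm d\mu_\sigma)}+\sqrt\lambda\|\rho^\gamma v\|_{L^2(Q_{\tau/2},\mathrm d\mu_\sigma)}\le C\big(\|\rho^\gamma F\|_{L^2(Q_{\tau/2},\mathrm d\mu_\sigma)}+\|\rho^\gamma f\|_{L^2(Q_{\tau/2},\mathrm d\mu_\sigma)}\big)$. Passing to averages over $Q_1$, and keeping track of the volume ratio $|Q_{\tau/2}|_{\mu_\sigma}/|Q_1|_{\mu_\sigma}$, which behaves like $\tau^{n}$ when $2\alpha-2-2\sigma\ge 0$ but like $\tau^{n}\cdot(\text{extra factor})$ absorbed into $\tau^{2(\alpha-1-\sigma)^+}$ when the weight exponent is negative near $\Gamma$ (this is exactly where the exponent $n+2(\alpha-1-\sigma)^+$ in \eqref{fu} comes from; the weight $\rho^{2\alpha-2-2\sigma}$ on the larger cube concentrates differently), one obtains
\[
\dashint_{Q_1}|\rho^\gamma Dv|\,\mathrm d\mu_\sigma+\sqrt\lambda\dashint_{Q_1}|\rho^\gamma v|\,\mathrm d\mu_\sigma
\le C\tau^{n+2(\alpha-1-\sigma)^+}\Big(\dashint_{Q_\tau}(|\rho^\gamma F|^2+|\rho^\gamma f|^2)\,\mathrm d\mu_\sigma\Big)^{1/2},
\]
using $Q_{\tau/2}\subset Q_\tau$ and Jensen/Hölder to pass from the $L^2$ average to the $L^1$ average. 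This controls the oscillation of $\rho^\gamma Dv$ by twice its average, so the first term on the right of \eqref{fu} is handled.

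For $w$: by Corollary \ref{Holder} (applied on $Q_{\tau/2}$, using $\tau\ge 2$), $\rho^\gamma Dw$ and $\rho^\gamma w$ are $C^\sigma$ on $Q_{\tau/4}\setminus\Gamma\supset Q_1$ with
\[
[\rho^\gamma Dw]_{C^\sigma(Q_{\tau/4})}+\big[\rho^\gamma w\big]_{C^\sigma(Q_{\tau/4})}\cdot(\text{scale})\le C(\tau/2)^{-\sigma-1}\Big(\dashint_{Q_{\tau/2}}|\rho^\gamma w|^2\,\mathrm d\mu_\sigma\Big)^{1/2};
\]
more precisely the rescaled version of Corollary \ref{Holder} gives $[\rho^\gamma Dw]_{C^\sigma(Q_1)}\le C\tau^{-\sigma-1}(\dashint_{Q_{\tau/2}}|\rho^\gamma w|^2\mathrm d\mu_\sigma)^{1/2}$ and similarly $[\rho^\gamma w]_{C^\sigma(Q_1)}\le C\tau^{-\sigma}(\dashint_{Q_{\tau/2}}|\rho^\gamma w|^2\mathrm d\mu_\sigma)^{1/2}$ after building in the $\sqrt\lambda$ weight via $\sqrt\lambda\le C\tau^{-1}\cdot\sqrt\lambda\cdot\tau$ — this is where the crude lower bound $\lambda\ge\lambda_0 R_0^{-2}$ is \emph{not} needed, only the scale-invariance. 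Then for any cube $Q_1$ of unit size, $\dashint_{Q_1}|g-(g)_{Q_1}|\,\mathrm d\mu_\sigma\le C[g]_{C^\sigma(Q_1)}\le C\tau^{-\sigma}(\dashint_{Q_{\tau/2}}|\rho^\gamma w|^2\mathrm d\mu_\sigma)^{1/2}$ for $g\in\{\rho^\gamma Dw,\ \sqrt\lambda\,\rho^\gamma w\}$. Finally bound $\dashint_{Q_{\tau/2}}|\rho^\gamma w|^2\mathrm d\mu_\sigma\le C\dashint_{Q_{\tau/2}}|\rho^\gamma u|^2\mathrm d\mu_\sigma+C\dashint_{Q_{\tau/2}}|\rho^\gamma v|^2\mathrm d\mu_\sigma$; the first is $\le C\dashint_{Q_\tau}|\rho^\gamma u|^2\mathrm d\mu_\sigma$, and using the Hardy/Sobolev inequality (Lemma \ref{wp}) or Lemma \ref{WE} to trade $\rho^\gamma u$ for $\rho^\gamma Du$ up to the $\sqrt\lambda$ term, $\dashint_{Q_{\tau/2}}|\rho^\gamma u|^2\mathrm d\mu_\sigma\le C\dashint_{Q_\tau}(|\rho^\gamma Du|^2+\lambda|\rho^\gamma u|^2)\mathrm d\mu_\sigma$, and likewise for $v$ which is in turn controlled by the data. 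Combining the $v$-estimate and the $w$-estimate and using the triangle inequality $\dashint_{Q_1}|\rho^\gamma Du-(\rho^\gamma Du)_{Q_1}|\le\dashint_{Q_1}|\rho^\gamma Dv-(\rho^\gamma Dv)_{Q_1}|+\dashint_{Q_1}|\rho^\gamma Dw-(\rho^\gamma Dw)_{Q_1}|$ (and the analogue with $\sqrt\lambda$) yields \eqref{fu}.

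\textbf{Main obstacle.} The delicate point is the bookkeeping of the weight in the passage between cubes of different sizes: the measure $\mathrm d\mu_\sigma=\rho^{2\alpha-2-2\sigma}\mathrm dx$ is a power weight that is degenerate ($2\alpha-2-2\sigma>0$) or singular ($<0$) near $\Gamma$, so the ratio of $\mu_\sigma$-averages on $Q_{\tau}$ and $Q_1$ is \emph{not} simply $\tau^{n}$ when $Q_1$ touches or nearly touches $\Gamma$; controlling it uniformly in the position of the base point is what produces the factor $\tau^{n+2(\alpha-1-\sigma)^+}$ rather than $\tau^n$, and getting this exponent sharp (in particular the $(\cdot)^+$) requires splitting into the cases $Q_{\tau r}(x_0)\cap\Gamma=\varnothing$ and $Q_{\tau r}(x_0)\cap\Gamma\ne\varnothing$ and, in the first case, comparing $\rho(x_0)$ with $\tau r$. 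A secondary technical point is that Corollary \ref{Holder} is stated for solutions vanishing on $\Gamma$ in $Q_{2r}$, so one must make sure the comparison function $w$ is genuinely such a solution on the larger cube, i.e. solve the homogeneous Dirichlet problem on $Q_{\tau/2}$ with boundary data $u$, which is legitimate by Lemma \ref{whole integral} (localized) since $u\in\mathring W^{1,2}$ already has the right trace on $\Gamma$.
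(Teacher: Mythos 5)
Your proposal is correct and follows essentially the same route as the paper: split $u=v+w$ into an inhomogeneous piece with zero boundary data, controlled via the $L^2$ energy estimate of Lemma \ref{whole integral} together with the weighted volume ratio $\int_{Q_{\tau r}}\mathrm{d}\mu_\sigma/\int_{Q_{r}}\mathrm{d}\mu_\sigma\le C\tau^{n+2(\alpha-1-\sigma)^+}$, and a homogeneous piece controlled by the H\"older estimates of Corollary \ref{Holder} combined with the Hardy inequality (Lemma \ref{wp}) to trade $\rho^{\gamma}w$ for $\rho^{\gamma}Dw$ near $\Gamma$. The only (immaterial) difference is that the paper takes $v$ to be the global solution with data cut off to $Q_{2r}(x_0)$ rather than the local Dirichlet solution on $Q_{\tau r/2}(x_0)$.
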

\begin{proof} Let $v\in W^{1,2}(\Omega, \mathrm{d}\mu)$ be a weak solution of the equation
\begin{equation}\label{v2}
	-\sum^n_{i,j=1}\partial_i(\rho^{-2\alpha}(\overline{a^{ij}}\partial_jv-F _i\,\chi_{Q_{2 r}(x_0)})	)+\lambda \rho^{-2\alpha}v=\sqrt{\lambda}\rho^{-2\alpha}f\chi_{Q_{2 r}(x_0)} \quad \text{in} \; \Omega
\end{equation}
with the boundary condition $v=0$ on $\Gamma$. 
Notice that for any $\tau\ge2$,
\begin{equation}
\frac{\int_{Q_{\tau r}(x_0)} \mathrm{d}\mu_\sigma}{\int_{Q_{2 r}(x_0)} \mathrm{d}\mu_\sigma} \le 
\begin{cases}
C\tau^{n} \quad\mbox{if } Q_{\tau r}(x_0)\cap\Gamma=\varnothing \\
C\tau^{n+2(\alpha-1-\sigma)}\quad \mbox{if }x_0=0,
\end{cases}
\end{equation}where $C=C(n)$.
Then, it follows from Lemma \ref{whole integral} that
\begin{equation}\begin{aligned}\label{v1'}
\dashint_{Q_{ r}(x_0)}(|\rho^\gamma D v|^2+\lambda|\rho^{\gamma} v|^2)\mathrm{d}\mu_\sigma &\leq C\dashint_{Q_{2 r}(x_0)}(|\rho^{\gamma} F|^2+|\rho^{\gamma} f|^2)\mathrm{d}\mu_\sigma\\&\leq C\tau^{n+2(\alpha-1-\sigma)^+} \dashint_{Q_{\tau r}(x_0)}(|\rho^{\gamma} F|^2+|\rho^{\gamma} f|^2)\mathrm{d}\mu_\sigma.
\end{aligned}
\end{equation} 
Next, we let $w=u-v$ so that $w$ is a weak solution of
\begin{equation}\label{w2}
-\sum^n_{i,j=1}\partial_i(\rho^{-2\alpha}\overline{a^{ij}}\partial_j w)+\lambda \rho^{-2\alpha}w =0 \quad in \ Q_{\tau r}(x_0)\backslash\Gamma.
\end{equation}
Using  Proposition \ref{prop3.2}, Corollary \ref{Holder}, and the scaling argument, for $\tau \geq2$, we have
	\begin{equation*}\begin{aligned}
	&\dashint_{Q_r(x_0)}|\rho^{1+\sigma-2\alpha}w-(\rho^{1+\sigma-2\alpha}w)_{Q_r(x_0)}|\mathrm{d}\mu_\sigma \\ \leq & Cr^{\sigma}[\rho^{1+\sigma-2\alpha}w]_{C^{\sigma}(Q_r(x_0)\backslash\Gamma)}\\ \leq & Cr^{\sigma}[\rho^{1+\sigma-2\alpha}w]_{C^{\sigma}(Q_{\tau r/2}(x_0)\backslash\Gamma)}
		\\ \leq & Cr^{\sigma}(\tau r)^{-\sigma}\left(\dashint_{Q_{\tau r}(x_0)}|\rho^{1+\sigma-2\alpha}w|^2\mathrm{d}\mu_\sigma\right)^{1/2}
		\\ = & C\tau^{-\sigma}\left(\dashint_{Q_{\tau r}(x_0)}|\rho^{\gamma}w|^2\mathrm{d}\mu_\sigma\right)^{1/2}.
	\end{aligned}\end{equation*}
By Lemma \ref{wp}, when $Q_1$ near the boundary $\Gamma$, we have
\begin{equation*}
\int_{Q_{1}(x_0)}|w|^2\rho^{-2\alpha}\mathrm{d}	x\leq \int_{Q_{1}}|w|^2\rho^{-2-2\alpha}\mathrm{d}x\leq C\int_{Q_{1}(x_0)}|Dw|^2\rho^{-2\alpha}\mathrm{d}x,
\end{equation*}where $C=C(n)$. 
 That is
 \begin{equation}\label{eq3.42}
\dashint_{Q_{r}(x_0)}|\rho^{1+\sigma-2\alpha}w|^2\mathrm{d}\mu_\sigma \leq Cr^{2}\dashint_{Q_{r}(x_0)}|\rho^{1+\sigma-2\alpha}Dw|^2\mathrm{d}\mu_\sigma.
\end{equation}	By Corollary \ref{Holder} and \eqref{eq3.42}, we have $$r^{\sigma}[\rho^{\gamma}Dw]_{C^{\sigma}(Q_{r}\backslash\Gamma)}\leq C \left(\dashint_{Q_{2r}}|\rho^{\gamma}Dw|^2\mathrm{d}\mu_\sigma \right)^{1/2}.$$
Thus, for the estimate of $Dw$, we similarly obtain
\begin{equation}\label{mW}\begin{aligned}
	&\dashint_{Q_r(x_0)}|\rho^{1+\sigma-2\alpha}Dw-(\rho^{1+\sigma-2\alpha}Dw)_{Q_r(x_0)}|\mathrm{d}\mu_\sigma\\ \leq & r^{\sigma}[\rho^{1+\sigma-2\alpha}Dw]_{C^\sigma(Q_r(x_0)\backslash\Gamma)}\\ \leq & r^{\sigma}[\rho^{1+\sigma-2\alpha}Dw]_{C^\sigma(Q_{\tau r/2}(x_0)\backslash\Gamma)} \\ \leq & Cr^{\sigma}(\tau r)^{-\sigma}\left(|\rho^{1+\sigma-2\alpha}D{w}|^2 \right)^{1/2}_{Q_{\tau r}(x_0)}
	\\ = & C\tau ^{-\sigma}\left(|\rho^{1+\sigma-2\alpha}Dw|^2\right)^{1/2}_{Q_{\tau r}(x_0)}.
	\end{aligned}\end{equation}
Now we observe that \begin{equation*}\begin{aligned}
	&(|\rho^{\gamma}Du-(\rho^{\gamma}Du)_{Q_r(x_0)}|)_{Q_r(x_0)}\\ \leq &2(|\rho^{\gamma}Du-\rho^{\gamma}Dw|)_{Q_r(x_0)}+(|\rho^{\gamma}Dw-(\rho^{\gamma}Dw)_{Q_r}|)_{Q_r(x_0)}\\ \leq & 2(|\rho^{\gamma}Dv|^2)^{1/2}_{Q_{r}(x_0)}+(|\rho^{\gamma}Dw-(\rho^{\gamma}Dw)_{Q_r}|)_{Q_r(x_0)}.
\end{aligned}\end{equation*}
Thus, (\ref{v1'}) and (\ref{mW})  yield
\begin{equation}\label{bdu}\begin{aligned}
&(|\rho^{\gamma}Du-(\rho^{\gamma}Du)_{Q_r(x_0)}|)_{Q_r(x_0)}\\\leq & C\tau^{n/2+(\alpha-1-\sigma)^+}\left(|\rho^{\gamma} F|^2+|\rho^{\gamma} f|^2\right)^{1/2}_{Q_{\tau r}(x_0)}+C\tau^{-\sigma}\left(|\rho^{\gamma}Dw|^2\right)^{1/2}_{Q_{\tau r}(x_0)}\\ \leq & C\tau^{n/2+(\alpha-1-\sigma)^+}\left(|\rho^{\gamma} F|^2+|\rho^{\gamma} f|^2\right)^{1/2}_{Q_{\tau r}(x_0)}+C\tau^{-\sigma}\left(|\rho^{\gamma}Du|^2\right)^{1/2}_{Q_{\tau r}(x_0)}.
\end{aligned}\end{equation}	
Similarly, we have
	
	\begin{equation}\label{bu}\begin{aligned}
&\sqrt{\lambda}(|\rho^{\gamma}u-(\rho^{\gamma}u)_{Q_r(x_0)}|)_{Q_r(x_0)}\\\leq & C\tau^{n/2+(\alpha-1-\sigma)^+}\left(|\rho^{\gamma} F|^2+|\rho^{\gamma} f|^2\right)^{1/2}_{Q_{\tau r}(x_0)}+C \sqrt{\lambda}\tau^{-\sigma}\left(|\rho^{\gamma}u|^2 \right)_{Q_{\tau r}(x_0)}^{1/2}.
\end{aligned}\end{equation}
Add (\ref{bdu}) and (\ref{bu}) together to obtain (\ref{fu}).
\end{proof}

Let \(\mathbb{Q}\) be the collection of all $Q_r(x)$, $x\in \mathbb{R}^n$, $r\in(0,\infty)$. 
Denote the maximal function and sharp function of $g$ over cylinder by
\begin{align*}
\mathcal{M}g(x)&=\sup_{\substack{x\in {Q}\\ {Q}\in \mathbb{Q} }} \dashint_{{Q}}|g|\,\mathrm{d}\mu_\sigma(y), \\
{(g)}^\sharp(x)&\sup_{\substack{x\in {Q} \\ {Q}\in  \mathbb{Q}}}\dashint_{{Q}}|g-(g)_{Q}|\,\mathrm{d}\mu_\sigma(y).
\end{align*}

\begin{thm}
Let  $p>1$, $\alpha\ge\frac{1}{2}$ and  $\sigma \in (0,\frac{1}{2})$. Suppose that $u\in \mathring{W}^{1,p}(\Omega,\rho^{\gamma p}\mathrm{d}\mu_\sigma)$ is a weak solution of (\ref{equationrho})-(\ref{boundaryrhof}) with $f\in L^p(\Omega,\rho^{\gamma p}\mathrm{d}\mu_\sigma)$ and $F\in  L^p(\Omega,\rho^{\gamma p}\mathrm{d}\mu_\sigma)^n$. Then  we have
\begin{equation*}
\| Du\|_{L^p(\Omega,\, \rho^{\gamma p}\mathrm{d} \mu_\sigma)}	+\sqrt{\lambda}\|u\|_{L^p(\Omega,\,\rho^{\gamma p}\mathrm{d}\mu_\sigma)}	\leq C\left(\|F\|_{L^p(\Omega,\,\rho^{\gamma p}\mathrm{d}\mu_\sigma)}+\|f\|_{L^p(\Omega, \,\rho^{\gamma p}\mathrm{d}\mu_\sigma)}	\right),
\end{equation*}
where  $C=C(n,\alpha,\kappa, p,\sigma)$.	
\end{thm}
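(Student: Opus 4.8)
The plan is to run Krylov's sharp-function machinery on the measure space $(\mathbb{R}^n,|\cdot|,\mathrm{d}\mu_\sigma)$, using Lemma~\ref{BMO} as the mean-oscillation input. First I would record the structural facts. Since $\mathrm{d}\mu_\sigma=\rho^{2\alpha-2-2\sigma}\mathrm{d}x$ and $2\alpha-2-2\sigma>-2$ (this is where $\alpha\ge\frac12$ and $\sigma<\frac12$ enter), the weight $|x'|^{2\alpha-2-2\sigma}$ is doubling on $\mathbb{R}^n$, so $\mathrm{d}\mu_\sigma$ is a doubling measure and $(\mathbb{R}^n,|\cdot|,\mathrm{d}\mu_\sigma)$ is a space of homogeneous type; since the cylinders $Q_r(x)$ are comparable to Euclidean balls, the cylinder operators $\mathcal{M}$ and $(\cdot)^\sharp$ of the text agree, up to constants, with their ball analogues. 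Hence the Hardy--Littlewood theorem ($\mathcal{M}\colon L^q(\mathrm{d}\mu_\sigma)\to L^q(\mathrm{d}\mu_\sigma)$ for $1<q\le\infty$) and the Fefferman--Stein inequality ($\|h\|_{L^p(\mathrm{d}\mu_\sigma)}\le C\|(h)^\sharp\|_{L^p(\mathrm{d}\mu_\sigma)}$ for $1<p<\infty$ and $h\in L^p(\mathrm{d}\mu_\sigma)$) are available. Recall also that $u\in\mathring{W}^{1,p}(\Omega,\rho^{\gamma p}\mathrm{d}\mu_\sigma)$ means precisely $\rho^\gamma Du,\ \sqrt{\lambda}\,\rho^\gamma u\in L^p(\Omega,\mathrm{d}\mu_\sigma)$, and $\rho^{2\gamma}\mathrm{d}\mu_\sigma=\mathrm{d}\mu$. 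Applying Lemma~\ref{BMO} on $Q_r(z)\subset Q_{\tau r}(z)$ for every cylinder $Q_r(z)\ni x$ and every $\tau\ge2$, and taking the supremum over such $Q_r(z)$, one obtains (reading the sharp function of the vector $\rho^\gamma Du$ componentwise)
\[
(\rho^\gamma Du)^\sharp(x)+\big(\sqrt{\lambda}\,\rho^\gamma u\big)^\sharp(x)\le C\tau^{n+2(\alpha-1-\sigma)^+}\Big(\mathcal{M}\big(|\rho^\gamma F|^2+|\rho^\gamma f|^2\big)(x)\Big)^{1/2}+C\tau^{-\sigma}\Big(\mathcal{M}\big(|\rho^\gamma Du|^2+\lambda|\rho^\gamma u|^2\big)(x)\Big)^{1/2}.
\]

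Next, the case $p\ge2$. I would take $L^p(\mathrm{d}\mu_\sigma)$-norms in the last display. By Fefferman--Stein the left side controls $\|Du\|_{L^p(\Omega,\rho^{\gamma p}\mathrm{d}\mu_\sigma)}+\sqrt{\lambda}\,\|u\|_{L^p(\Omega,\rho^{\gamma p}\mathrm{d}\mu_\sigma)}$; on the right side, for $p>2$ we have $p/2>1$, so the Hardy--Littlewood theorem gives $\big\|(\mathcal{M}(|\rho^\gamma F|^2+|\rho^\gamma f|^2))^{1/2}\big\|_{L^p(\mathrm{d}\mu_\sigma)}\le C\big(\|F\|_{L^p(\Omega,\rho^{\gamma p}\mathrm{d}\mu_\sigma)}+\|f\|_{L^p(\Omega,\rho^{\gamma p}\mathrm{d}\mu_\sigma)}\big)$ and likewise for the second term with $\|Du\|+\sqrt{\lambda}\,\|u\|$ (the borderline $p=2$ being Lemma~\ref{whole integral}). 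This yields, with all norms in $L^p(\Omega,\rho^{\gamma p}\mathrm{d}\mu_\sigma)$,
\[
\|Du\|+\sqrt{\lambda}\,\|u\|\le C\tau^{n+2(\alpha-1-\sigma)^+}\big(\|F\|+\|f\|\big)+C_0\tau^{-\sigma}\big(\|Du\|+\sqrt{\lambda}\,\|u\|\big).
\]
Since $\|Du\|_{L^p(\Omega,\rho^{\gamma p}\mathrm{d}\mu_\sigma)}+\sqrt{\lambda}\,\|u\|_{L^p(\Omega,\rho^{\gamma p}\mathrm{d}\mu_\sigma)}$ is finite by hypothesis, I fix $\tau=\tau(n,\alpha,\kappa,p,\sigma)\ge2$ with $C_0\tau^{-\sigma}\le\frac12$ and absorb the last term, obtaining the estimate for $p\ge2$.

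For $1<p<2$ I would argue by duality, exploiting that $(\overline{a^{ij}})$ is symmetric, so the operator is self-adjoint in the $\mathrm{d}\mu$-pairing. Set $p^*=\frac{p}{p-1}>2$. Given $G\in L^{p^*}(\Omega,\rho^{\gamma p^*}\mathrm{d}\mu_\sigma)^n$ and $g\in L^{p^*}(\Omega,\rho^{\gamma p^*}\mathrm{d}\mu_\sigma)$, let $v\in\mathring{W}^{1,p^*}(\Omega,\rho^{\gamma p^*}\mathrm{d}\mu_\sigma)$ solve $-\partial_i(\rho^{-2\alpha}(\overline{a^{ij}}\partial_j v-G_i))+\lambda\rho^{-2\alpha}v=\sqrt{\lambda}\rho^{-2\alpha}g$ with $v=0$ on $\Gamma$; its existence and the bound $\|Dv\|_{L^{p^*}(\Omega,\rho^{\gamma p^*}\mathrm{d}\mu_\sigma)}+\sqrt{\lambda}\,\|v\|_{L^{p^*}(\Omega,\rho^{\gamma p^*}\mathrm{d}\mu_\sigma)}\le C(\|G\|+\|g\|)$ follow from Lemma~\ref{whole integral} and the case $p^*>2$ just treated, via a standard approximation (approximate $G,g$ by functions in $C_c^\infty(\Omega\setminus\Gamma)$, solve at the $L^2$ level, and pass to the limit using the $p^*>2$ a priori estimate together with the constant-coefficient interior and boundary regularity of Section~\ref{se3}). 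Using $v$ as a test function in the equation for $u$, $u$ as a test function in the equation for $v$, and subtracting (the symmetry of $(\overline{a^{ij}})$ cancels the principal terms), one gets $\int_\Omega G_i\partial_i u\,\mathrm{d}\mu+\sqrt{\lambda}\int_\Omega gu\,\mathrm{d}\mu=\int_\Omega F_i\partial_i v\,\mathrm{d}\mu+\sqrt{\lambda}\int_\Omega fv\,\mathrm{d}\mu$. Writing $\mathrm{d}\mu=\rho^{2\gamma}\mathrm{d}\mu_\sigma$ and applying H\"older's inequality with exponents $p,p^*$, the right-hand side is $\le C\big(\|F\|+\|f\|\big)_{L^p(\Omega,\rho^{\gamma p}\mathrm{d}\mu_\sigma)}\big(\|G\|+\|g\|\big)_{L^{p^*}(\Omega,\rho^{\gamma p^*}\mathrm{d}\mu_\sigma)}$, whence taking the supremum over $\|G\|_{L^{p^*}(\Omega,\rho^{\gamma p^*}\mathrm{d}\mu_\sigma)}+\|g\|_{L^{p^*}(\Omega,\rho^{\gamma p^*}\mathrm{d}\mu_\sigma)}\le1$ recovers $\|Du\|_{L^p(\Omega,\rho^{\gamma p}\mathrm{d}\mu_\sigma)}+\sqrt{\lambda}\,\|u\|_{L^p(\Omega,\rho^{\gamma p}\mathrm{d}\mu_\sigma)}$ up to a constant, finishing the proof.

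The main obstacle will not be the sharp-function iteration itself, which is routine once Lemma~\ref{BMO} is in place, but two supporting points. First, one must check carefully that $(\mathbb{R}^n,|\cdot|,\mathrm{d}\mu_\sigma)$ really is a space of homogeneous type carrying the Fefferman--Stein inequality for $\mathrm{d}\mu_\sigma$-averages; this is exactly where $\alpha\ge\frac12$ and $\sigma\in(0,\frac12)$ are used, through $2\alpha-2-2\sigma>-2$. Second, in the range $1<p<2$, one must extract solvability of the adjoint problem at the exponent $p^*>2$ with the correct constant from the $L^2$-theory and the $p>2$ case, rather than taking it for granted; this is the one place where the argument is not purely formal.
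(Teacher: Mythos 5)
Your proposal is correct and follows essentially the same route as the paper: the pointwise sharp-function estimate from Lemma~\ref{BMO}, the Fefferman--Stein and Hardy--Littlewood theorems on the homogeneous-type space $(\mathbb{R}^n,\mathrm{d}\mu_\sigma)$, absorption by choosing $\tau$ large for $p\ge 2$, and duality via the adjoint (symmetric constant-coefficient) problem for $1<p<2$, with $p=2$ covered by Lemma~\ref{whole integral}. The only differences are cosmetic (e.g.\ you solve the adjoint problem directly with data $G,g$ rather than with $\rho^{-\gamma}G,\rho^{-\gamma}g$ as in the paper's Case II of Theorem~\ref{main theorem}, and you spell out the doubling property and the approximation step for solvability at exponent $p^*$ that the paper leaves implicit).
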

\begin{proof}
We only need to prove the result for the case when $p\in (2, \infty)$, since the case when $p \in (1, 2)$ can be established using a duality argument, similar to what will show in the proof of Theorem \ref{main theorem} later.
    
By Lemma \ref{BMO}, if $x\in Q_r(x_0)$, 
\begin{equation}\label{C}\begin{aligned}
&\left({|\rho^{\gamma}Du|}-({|\rho^{\gamma}Du|})_{Q_r(x_0)}\right)_{Q_r(x_0)}+\sqrt{\lambda}\left({|\rho^{\gamma}u|}-({|\rho^{\gamma}u|}\right)_{Q_r(x_0)})_{Q_r(x_0)}\\ \leq & C \tau^{-\sigma}H_1(x)+C\tau ^{n/2+(\alpha-1-\sigma)^+}H_2(x),
\end{aligned}\end{equation}
where $$H_1(x)=\sqrt{\lambda}[\mathcal{M}(|\rho^{\gamma}u|^2)(x)]^{1/2}+[\mathcal{M}(|\rho^{\gamma}Du|^2)(x)]^{1/2}$$  and $$H_2(x)=[\mathcal{M}(|\rho^{\gamma}F|^2)(x)]^{1/2}+[\mathcal{M}(|\rho^{\gamma}f|^2)(x)]^{1/2}.$$
It follows that for any $x\in \Omega$ and $Q\in \mathbb{Q}$ such that $x\in Q$,  (\ref{C}) holds when $Q_r$ is replaced by $Q$.
 Then, we maximize with respect to $Q\in \mathbb{Q}$ such that $x\in Q$, we have
\begin{equation*}
\begin{aligned}
(\rho^{\gamma}Du)^\sharp(x)+\sqrt{\lambda}(\rho^{\gamma}u)^\sharp(x) \leq  C \tau^{-\sigma}H_1(x)+C\tau^{n/2+(\alpha-1-\sigma)^+}H_2(x).
\end{aligned}	
\end{equation*}
Now, by the Fefferman-Stein Theorem (see \cite[Theorem 5]{FS}), we obtain
\[\|\rho^{\gamma}Du\|_{L^p}+\sqrt{\lambda}\|\rho^{\gamma}u\|_{L^p}\leq C \tau^{-\sigma}\|H_1\|_{L^p}+C\tau^{n/2+(\alpha-1-\sigma)^+}\|H_2\|_{L^p},\]
where $L^p=L^p(\Omega,\,d\mu_\sigma )$.
From the definition of $H_1$, $H_2$ and the Hardy-Littlewood maximal function theorem (recall that $p>2$) it follows that
\begin{equation*}
\begin{aligned}
&\|\rho^{\gamma}Du\|_{L^p}+\sqrt{\lambda}\|\rho^{\gamma}u\|_{L^p}\\ \leq &C\tau^{n/2+(\alpha-1-\sigma)^+}\left(\|\rho^{\gamma}F\|_{L^p}+\|\rho^{\gamma}f\|_{L^p}\right)+C\tau^{-\sigma}\left(\|\rho^{\gamma}Du\|_{L^p}+\sqrt{\lambda}\|\rho^{\gamma}u\|_{L^p}\right).
\end{aligned} \end{equation*}
Choosing $\tau$ large enough such that $C \tau^{-\sigma}\leq \frac{1}{2}$,  we have 
\begin{equation*}
\begin{aligned}
\|\rho^{\gamma}Du\|_{L^p}+\sqrt{\lambda}\|\rho^{\gamma}u\|_{L^p}\leq &C \left(\|\rho^{\gamma}F\|_{L^p}+\|\rho^{\gamma}f\|_{L^p}\right).
\end{aligned}
\end{equation*}
\end{proof}

\section{Equations with general coefficients}
\label{se4}

From the previous section's estimates and a minor adaptation of the argument in \cite{HT}, we shall finalize the proof of the main theorem. Moreover, we will establish a local $L^p$ estimate. 

\begin{prop}\label{4.1}
Let $\delta_0\in (0,1)$, $\alpha\ge\frac{1}{2}$, $r>0$, $\tau>4$, $x_0\in \mathbb{R}^n$, and $q>2$. Suppose that $u\in \mathring{W}^{1,2}(Q_{\tau r}(x_0),\mathrm{d}\mu)$ is a weak solution of (\ref{equation})-(\ref{boundary}) in $Q_{\tau r}(x_0)\backslash \Gamma$ with $f\in L^2(Q_{\tau r}(x_0),\mathrm{d}\mu)$ and $F\in L^2(Q_{\tau r}(x_0),\mathrm{d}\mu)^n$. If Assumption $(\delta_0,R_0)$ is satisfied and $\mathrm{spt}(u)\subset\mathbb{R}^{n-1}\times (x^j_0-R_0r_0,x^j_0+R_0r_0)$ for some $r_0>0$, some $j={3,\cdots,n}$ and $x^j_0 \in \mathbb{R}$,
 then we have 
\begin{equation}\begin{aligned}\label{BMOG}
	 &\dashint_{Q_r(x_0)}|\rho^{\gamma}Du-(\rho^{\gamma}Du)_{Q_r(x_0)}|\mathrm{d}\mu_\sigma +\sqrt{\lambda}\dashint_{Q_r(x_0)}|\rho^{\gamma}u-(\rho^{\gamma}u)_{Q_r(x_0)}|\mathrm{d}\mu_\sigma\\ \leq & C\tau^{n/2+(\alpha-1-\sigma)^+}\left(\left(\dashint_{Q_{\tau r}(x_0)}\left(|\rho^{\gamma} F|^2+|\rho^{\gamma} f|^2\right)\mathrm{d}\mu_\sigma\right)^{1/2}\right.\\&\left.+({\delta_0}^{\frac{q-2}{q}}+{r_0}^{\frac{q-2}{q}})\left(\dashint_{Q_{\tau r}(x_0)}|\rho^{\gamma}Du|^q\mathrm{d}\mu_\sigma \right)^{1/q}\right)\\& +C \tau^{-\sigma}\left(\dashint_{Q_{\tau r}(x_0)}\left(|\rho^{\gamma}Du|^2+\lambda|\rho^{\gamma}u|^2\right)\mathrm{d}\mu_\sigma \right)^{1/2},
	\end{aligned}\end{equation}
where $C=C(n,\alpha,\kappa,q, \sigma)$.
\end{prop}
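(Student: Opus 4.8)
The plan is to run Krylov's coefficient-freezing argument, repeating the proof of Lemma~\ref{BMO} almost line by line and inserting a single new estimate to absorb the oscillation of $(a^{ij})$; this new estimate is the only place where Assumption~$(\delta_0,R_0)$ and the slab support of $u$ are used. Fix the constant matrix $(\overline{a^{ij}}):=([a^{ij}]_{\tau r/2,\,x_0})$ and rewrite \eqref{equation} on $Q_{\tau r}(x_0)\setminus\Gamma$ as a constant-coefficient equation with the right-hand data modified to $\mathcal F_i:=F_i+\sum_j(a^{ij}-\overline{a^{ij}})\partial_j u$. Let $v\in W^{1,2}(\Omega,\mathrm d\mu)$ solve this frozen equation in $\Omega$ with data $\mathcal F_i\chi_{Q_{\tau r/2}(x_0)}$, $f\chi_{Q_{\tau r/2}(x_0)}$ and $v=0$ on $\Gamma$, and set $w:=u-v$, a weak solution of the homogeneous equation \eqref{ho equation} in $Q_{\tau r/2}(x_0)\setminus\Gamma$.

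Next I would recycle the two ingredients of Lemma~\ref{BMO} essentially verbatim. Lemma~\ref{whole integral} applied to $v$ (using $\rho^{2\gamma}\mathrm d\mu_\sigma=\mathrm d\mu$), together with the weighted measure comparison $\int_{Q_{\tau r/2}(x_0)}\mathrm d\mu_\sigma\le C\tau^{n+2(\alpha-1-\sigma)^+}\int_{Q_r(x_0)}\mathrm d\mu_\sigma$ --- where the exponent $(\alpha-1-\sigma)^+$ comes from the case $x_0=0$, in which $\int_{Q_R(0)}\mathrm d\mu_\sigma\sim R^{n-2+2\alpha-2\sigma}$ --- bounds $\dashint_{Q_r(x_0)}|\rho^\gamma Dv|\,\mathrm d\mu_\sigma$ by $C\tau^{n/2+(\alpha-1-\sigma)^+}\big(\dashint_{Q_{\tau r}(x_0)}(|\rho^\gamma\mathcal F|^2+|\rho^\gamma f|^2)\,\mathrm d\mu_\sigma\big)^{1/2}$. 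Since $\tau>4$ forces $Q_r(x_0)\subset Q_{\tau r/4}(x_0)$, Proposition~\ref{prop3.2}, Corollary~\ref{Holder} and the Hardy inequality Lemma~\ref{wp} bound the mean oscillations of $\rho^\gamma Dw$ and $\rho^\gamma w$ over $Q_r(x_0)$ by $C\tau^{-\sigma}\big(\dashint_{Q_{\tau r}(x_0)}(|\rho^\gamma Du|^2+\lambda|\rho^\gamma u|^2)\,\mathrm d\mu_\sigma\big)^{1/2}$, exactly as in Lemma~\ref{BMO}. Writing $\rho^\gamma Du=\rho^\gamma Dv+\rho^\gamma Dw$ and $\rho^\gamma u=\rho^\gamma v+\rho^\gamma w$ and adding, the proof of \eqref{BMOG} reduces to estimating $\dashint_{Q_{\tau r}(x_0)}|\rho^\gamma(a^{ij}-\overline{a^{ij}})\partial_j u|^2\,\mathrm d\mu_\sigma$ by the right-hand side of \eqref{BMOG}.

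For this I would use $\partial_j u=\chi_S\,\partial_j u$, where $S:=\mathbb R^{n-1}\times(x_0^j-R_0r_0,\,x_0^j+R_0r_0)$ contains $\mathrm{spt}(u)$. Hölder's inequality with exponents $q/2$ and $q/(q-2)$, followed by the boundedness of $(a^{ij})$ (which lets one replace higher powers of $|a^{ij}-\overline{a^{ij}}|$ by its first power), gives
\[
\dashint_{Q_{\tau r}(x_0)}|\rho^\gamma(a^{ij}-\overline{a^{ij}})\partial_j u|^2\,\mathrm d\mu_\sigma
\le C\Big(\dashint_{Q_{\tau r}(x_0)}|\rho^\gamma Du|^q\,\mathrm d\mu_\sigma\Big)^{2/q}\Big(\dashint_{Q_{\tau r}(x_0)}|a^{ij}-\overline{a^{ij}}|\,\chi_S\,\mathrm d\mu_\sigma\Big)^{\frac{q-2}{q}}.
\]
If $\tau r\le R_0$, Assumption~$(\delta_0,R_0)$ bounds the last average by $\delta_0$. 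If $\tau r>R_0$, then since $\rho^{2\alpha-2-2\sigma}$ depends only on $x'$ the measure $\mathrm d\mu_\sigma$ factors as a product over $x'$ and over the individual coordinates $x^i$, and the $x^j$-section of $S$ has length $2R_0r_0<2\tau r$, so $\dashint_{Q_{\tau r}(x_0)}\chi_S\,\mathrm d\mu_\sigma\le CR_0r_0/(\tau r)\le Cr_0$. In either case the last factor is $\le C(\delta_0^{(q-2)/q}+r_0^{(q-2)/q})$, and substituting back into the estimate for $v$ produces the term in \eqref{BMOG} carrying $\delta_0^{(q-2)/q}+r_0^{(q-2)/q}$; the accumulated powers of $\tau$ combine into $\tau^{n/2+(\alpha-1-\sigma)^+}$ because the remaining measure ratios (e.g. $\int_{Q_{\tau r}(x_0)}\mathrm d\mu_\sigma\big/\int_{Q_{\tau r/2}(x_0)}\mathrm d\mu_\sigma$) are bounded independently of $\tau$. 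The one genuinely new point --- and the only place I expect real difficulty --- is this last step in the regime $\tau r>R_0$, where Assumption~$(\delta_0,R_0)$ is vacuous: it is precisely the slab support of $u$, combined with the product structure of $\mathrm d\mu_\sigma$ away from $\Gamma$, that supplies the $r_0$-smallness there.
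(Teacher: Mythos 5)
Your proposal is correct and follows essentially the same route as the paper: freeze the coefficients, absorb $(a^{ij}-\overline{a^{ij}})\partial_j u$ into the data, apply the constant-coefficient oscillation estimate (Lemma \ref{BMO}), and bound the extra term by H\"older's inequality together with Assumption $(\delta_0,R_0)$ when $\tau r\lesssim R_0$ and with the slab support of $u$ (via $\dashint_{Q_{\tau r}(x_0)}\chi_S\,\mathrm{d}\mu_\sigma\lesssim R_0r_0/(\tau r)$) when $\tau r\gtrsim R_0$. The only differences from the paper are immaterial choices of the freezing scale and the threshold between the two regimes.
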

\begin{proof}
Let $\widetilde{F}=(\widetilde{F}_1,\widetilde{F}_2,\cdots,\widetilde{F}_n)$, $\widetilde{F}_i=(a^{ij}-\overline{a^{ij}})\partial_j u$, where $\overline{a^{ij}}=\dashint_{Q _{10r}(x_0)}a^{ij}\mathrm{d}x$.
	If $r\in (0,\frac{R_0}{10})$, by H\"older's inequality, and Assumption $(\delta_0,R_0)$, we have
	\begin{equation*}\begin{aligned}
	&\dashint_{Q _{r}(x_0)}|\rho^{\gamma} \widetilde{F}(x)|^2\mathrm{d}\mu_\sigma\\ \leq &\left(\dashint_{Q _{ r}(x_0)}|a^{ij}-\overline{a^{ij}}|^{(q-2)/q}\mathrm{d}\mu_\sigma\right)^{\frac{q-2}{q}}\left(\dashint_{Q _{r}(x_0)}|\rho^\gamma Du|^q\mathrm{d}\mu_\sigma\right)^{2/q} \\ \leq & C \delta_0^{1-2/q}\left(\dashint_{Q_{r}(x_0)}|\rho^\gamma Du|^q\mathrm{d}\mu_\sigma\right)^{2/q}.
	\end{aligned}\end{equation*}
By the scaling argument, we obtain	
	\begin{equation*}\begin{aligned}
	\dashint_{Q _{\tau r}(x_0)}|\rho^{\gamma} \widetilde{F}(x)|^2\mathrm{d}\mu_\sigma \leq  C \delta_0^{1-2/q}\left(\dashint_{Q_{\tau r}(x_0)}|\rho^\gamma Du|^q\mathrm{d}\mu_\sigma\right)^{2/q}.
	\end{aligned}\end{equation*}
Whenever $Q_{\tau r}(x_0)$ lies in the interior of $\Omega$ or $Q_{\tau r}(x_0)$ is near the boundary $\Gamma$, we obtain the following result in both cases:
\begin{equation}\label{ib}
	\dashint_{Q_{r}(x_0)}\chi_{(x^j_0-R_0r_0,\,x^j_0+R_0r_0)}(x_j)\mathrm{d}\mu_\sigma=Cr^{-1}R_0r_0.
\end{equation}
If $r\geq \frac{R_0}{10}$, given that $spt(u)\subset \mathbb{R}^{n-1}\times (x^j_0-R_0r_0,x^j_0+R_0r_0)$  and  using (\ref{ib}) along with the boundedness of $a^{ij}$, we have 
\begin{equation*}\begin{aligned}
	&\dashint_{Q _{\tau r}(x_0)}|\rho^{\gamma} \widetilde{F}(x)|^2\mathrm{d}\mu_\sigma\\ \leq & C \left(\dashint_{Q _{\tau r}(x_0)}\chi_{(x^j_0-R_0r_0,x^j_0+R_0r_0)}(x_j)\mathrm{d}\mu_\sigma\right)^{(q-2)/q}\left(\dashint_{Q_{\tau r}(x_0)}|\rho^\gamma Du(x)|^{q}\mathrm{d}\mu_\sigma \right)^{2/q}\\ \leq & C{r_0}^{1-2/q}\left(\dashint_{Q _{\tau r}(x_0)}|\rho^\gamma Du(x)|^{q}\mathrm{d}\mu_\sigma \right)^{2/q}.
\end{aligned}
\end{equation*}
Therefore, in both case we have
\begin{equation*}
	\begin{aligned}
	\dashint_{Q _{\tau r}(x_0)}|\rho^\gamma \widetilde{F}(x)|^2\mathrm{d}\mu_\sigma  \leq  C({r_0}^{1-2/q}+\delta_0^{1-2/q})\left(\dashint_{Q _{\tau r}(x_0)}|\rho^\gamma Du(x)|^{q}\mathrm{d}\mu_\sigma\right)^{2/q}.
\end{aligned}
\end{equation*}
By Lemma \ref{BMO}, we obtain (\ref{BMOG}).
\end{proof}

\begin{proof}[Proof of Theorem \ref{main theorem}]
	
We split the proof into two cases when $p\in (2, \infty)$ and when $p \in (1, 2)$.

{\bf Case I}: $p\in (2, \infty)$.

We first consider the case of $spt(u)\subset\mathbb{R}^{n-1}\times (x_0^k-R_0r_0,x_0^k+R_0r_0)$ with some $x_0^k\in \mathbb{R}$ and $r_0\in (0
,1)$.
Let $q\in(2,p)$ be fixed. By H\"older's inequality and using $2\alpha-2-2\sigma >-2$, we have $u\in W^{1,q}_{loc}(\Omega,\,\rho^{\gamma q}\mathrm{d}\mu_\sigma)$ when $u\in W^{1,p}_{loc}(\Omega,\,\rho^{\gamma p}\mathrm{d}\mu_\sigma)$. Applying Proposition \ref{4.1}, for each $r>0$ and $x_0 \in \Omega$, we can write $$u=v+w,\quad \text{in}\ Q_{\tau r}(x_0)$$
where $v$ and $w$ satisfy (\ref{v2}) and (\ref{w2}).

By the definition of sharp function, H\"older's inequality and Proposition \ref{4.1},  we can see that
\begin{equation*}\begin{aligned}
&(\rho^\beta Du)^\sharp+\sqrt{\lambda}(\rho^\gamma u)^\sharp	\\ \leq & C\tau^{n/2+(\alpha-1-\sigma)^+}\left(\mathcal{M}(|\rho^\gamma f|^2)^{1/2}+\mathcal{M}(|\rho^\gamma F|^2)^{1/2}\right)\\&+C\tau^{-\sigma}\left(\sqrt{\lambda}\mathcal{M}(|\rho^\gamma u|^2)^{1/2}+\mathcal{M}(|\rho^\gamma Du|^2)^{1/2}\right) \\ &+C\tau^{n/2+(\alpha-1-\sigma)^+}({\delta_0}^{1/2-1/q}+{r_0}^{1/2-1/q})\mathcal{M}(|\rho^\gamma Du|^q)^{1/q} .
\end{aligned}\end{equation*}

For fixed $\alpha> \frac{1}{2}$, it can be readily verified that $(\Omega, \mu_\sigma)$ constitutes a space of homogeneous type. By the generalized Fefferman-Stein theorem on sharp functions, and the Hardy-Littlewood maximal function theorem, we obtain
\begin{equation*}\begin{aligned}
	& \|\rho^\gamma Du\|_{L^p}+\sqrt{\lambda} \|\rho^\gamma u\|_{L^p}\\ \leq & C\left( \|(\rho^\gamma Du)^\sharp\|_{L^p}+\sqrt{\lambda} \|(\rho^\gamma u)^\sharp\|_{L^p}
	\right)\\ \leq & C\tau^{n/2+(\alpha-1-\sigma)^+}\left(\|\rho^\gamma f \|_{L^p}+\|\rho^\gamma F\|_{L^p}\right)+C \tau^{-\sigma}\left(\sqrt{\lambda}\|\rho^\gamma u\|_{L^p}+\|\rho^\gamma Du\|_{L^p}\right)\\ &\left.+C\tau^{n/2+(\alpha-1-\sigma)^+}({\delta_0}^{1/2-1/q}+{r_0}^{1/2-1/q})\|\rho^\gamma Du\|_{L^p}\right.,
\end{aligned}\end{equation*}
where $L^p=L^p(\Omega,\mathrm{d}\mu_\sigma)$.  
First, select $C\tau^{-\sigma} \leq \frac{1}{2}$. Then, choose $\delta_0$ and $r_0$ to be sufficiently small so that $C\tau^{n/2+(\alpha-1-\sigma)^+}({\delta_0}^{1/2-1/q}+{r_0}^{1/2-1/q})\leq \frac{1}{2}$, 
we obtain
\begin{equation*}
\|\rho^\gamma Du\|_{L^p}+\sqrt{\lambda}\|\rho^\gamma u\|_{L^p}	\leq C(\|\rho^\gamma f\|_{L^p}+\|\rho^\gamma F\|_{L^p}).
\end{equation*}

Next, we remove the assumption that $\mathrm{spt}(u)\subset\mathbb{R}^{n-1}\times (x_0^k-R_0r_0,x_0^k+R_0r_0)$ with some $x_0^k\in \mathbb{R}$ and $r_0\in (0,1)$.  We will use a partition of unity argument to obtain the results. The details of proof are similar to those in the Proof of Theorem 2.2. in \cite{HT}.
Take non-negative cut-off function $\xi(z^k)\in C^\infty_0(x_0^k-R_0r_0,x_0^k+R_0r_0)$ satisfying 
\begin{equation}\label{xi}
\int_\mathbb{R}	\xi ^p(x^k)\mathrm{d}x^k=1,\quad \int_\mathbb{R}	|\xi ^\prime(x^k)|^p\mathrm{d}x^k\leq \frac{C}{(R_0r_0)^p}.
\end{equation}
For any $\eta \in \mathbb{R}$, let $u^{\eta}(x)=u(x)\xi(x^k-\eta)$, then $u^{\eta}$ is a weak solution of
\begin{equation}\label{ctn}
-\sum^n_{i,j=1}\partial_i(\rho^{-2\alpha}(a^{ij}\partial_j u^{\eta}-F_i^{\eta}))+\lambda  \rho^{-2\alpha}u^{\eta} =\sqrt{\lambda }\rho^{-2\alpha}f^\eta
\end{equation}
with the boundary condition $u^{\eta}(x)=0$ on $\Gamma$,
where 
$$F_i^\eta(x)=F_i(x)\xi(x^k-\eta)+a^{ij}u \partial_j\xi,\quad f^\eta(x)=f(x)\xi(x_k-\eta)-\lambda^{-1/2}(a^{ij}\partial_ju-F_i)\partial_{j}\xi.$$
Since $\mathrm{spt}(u^\eta)\subset \mathbb{R}^{n-1}\times (x_0^k-R_0r_0,x_0^k+R_0r_0)$, we can obtain the estimate 
\begin{equation*}
\|\rho^\gamma Du^\eta\|_{L^p}+\sqrt{\lambda}\|\rho^\gamma u^\eta\|_{L^p}	 \leq C \left(\|\rho^\gamma f^\eta\|_{L^p}+\|\rho^\gamma F^\eta\|_{L^p}\right),
\end{equation*}
where $L^p=L^p(\Omega,\,d\mu_\sigma )$.
Integrating with respect to \( \eta \), we get
\begin{equation*}\label{}\begin{aligned}
&\int_{\mathbb{R}}\left(\|\rho^\gamma Du^{\eta}\|_{L^{p}}^{p} + \lambda^{p/2}\|\rho^\gamma  u^{\eta}\|_{L^{p}}^{p}\right)\mathrm{d}\eta \\
\leq & C\int_{\mathbb{R}}\left(\|\rho^\gamma  F^{\eta}\|_{L^{p}}^{p} + \|\rho^\gamma  f^{\eta}\|_{L^{p}}^{p}\right) \mathrm{d}\eta.
\end{aligned}\end{equation*}
It follows from the Fubini theorem and (\ref{xi}) that

\begin{equation*}\begin{aligned}
\|\rho^\gamma u\|_{L^{p}}^{p}=\int_{\mathbb{R}}\|\rho^\gamma u^{\eta}\|_{L^{p}}^{p}\, \mathrm{d}\eta.
\end{aligned}\end{equation*}
Similarly,
\begin{equation*}\begin{aligned}
\|\rho^\gamma Du\|_{L^{p}}^{p}&=\int_{\Omega} \int_{\mathbb{R}} |\rho^\gamma Du(x)|^{p} |\xi(x^k-\eta)|^{p}\, \mathrm{d}\eta\, \mathrm{d} \mu_\sigma\\&\leq \int_{\mathbb{R}}\|\rho^\gamma Du^{\eta}\|_{L^{p}}^{p}\, \mathrm{d} \eta + \int_{\mathbb{R}}\|\rho^\gamma u D\xi\|_{L^{p}}^{p}\, \mathrm{d}\eta \\&\leq \int_{\mathbb{R}}\|\rho^\gamma Du^{\eta}\|_{L^{p}}^{p}\, \mathrm{d} \eta + \frac{C}{(R_0r_0)^p}\int_{\mathbb{R}}\|\rho^\gamma u \|_{L^{p}}^{p}\, \mathrm{d} \eta.
\end{aligned}\end{equation*}

Since \( r_{0} \) depends only on \( \alpha \), \( \kappa \), and \( p \), 
 by referring to the definitions of \( f^{\eta} \) and \( F_i^{\eta} \), along with equation (\ref{xi}), and applying the Fubini theorem, we obtain \begin{equation*}\begin{aligned}
 \int_{\mathbb{R}}\|\rho^\gamma F^{\eta}\|_{L^{p}}^{p}\, \mathrm{d}\eta & \leq \|\rho^\gamma F\|_{L^{p}}^{p}+C{R_0}^{-p} \|\rho^\gamma u\|_{L^{p}}^{p},\\
\int_{\mathbb{R}}\|\rho^\gamma f^{\eta}\|_{L^{p}}^{p}\, \mathrm{d}\eta &\leq \|\rho^\gamma f\|_{L^{p}}^{p}+C\lambda^{-p/2}{R_0}^{-p}\left(\|\rho^\gamma F\|_{L^{p}}^{p}+\|\rho^\gamma Du\|_{L^{p}}^{p}\right).
\end{aligned}\end{equation*}
All the estimates above imply
\begin{equation*}\begin{aligned}
&\|\rho^\gamma Du\|_{L^{p}}+\sqrt{\lambda}\|\rho^\gamma u\|_{L^{p}}\\ \leq &C (1+\lambda^{-1/2}{R_0}^{-1})\|\rho^\gamma F\|_{L^{p}}+C\|\rho^\gamma f\|_{L^{p}}+ C \lambda^{-1/2}{R_0}^{-1}\left(\|\rho^\gamma Du\|_{L^{p}}+\sqrt{\lambda}\|\rho^\gamma u\|_{L^{p}}\right).
\end{aligned}\end{equation*}
Now, we choose $\lambda_0$  such that $\lambda_0=(2C)^2$. For $\lambda\geq \lambda_0 {R_0}^{-2}$, we have $C\lambda^{-1/2} R_0^{-1}\leq \frac{1}{2}$. Thus
\begin{equation*}\begin{aligned}
\|\rho^\gamma Du\|_{L^{p}}+\sqrt{\lambda}\|\rho^\gamma u\|_{L^{p}} \leq C\left(\|\rho^\gamma F\|_{L^{p}}+\|\rho^\gamma f\|_{L^{p}}\right).
\end{aligned}\end{equation*}
  
{\bf Case II}: $p \in (1, 2)$. 

We employ the method of duality. Similar ideas are utilized in the proofs of Theorem 4.1 in \cite{HT2} and Theorem 4.3 in \cite{HT}.
Let $q = \frac{p}{p-1} \in (2,\infty)$ and let $G:\Omega \rightarrow \mathbb{R}^n$ and  $g: \Omega \rightarrow \mathbb{R}$ be measurable functions such that  $|G| +|g| \in L^q(\Omega, \mathrm{d} \mu_\sigma)$. We consider the adjoint problem in $\Omega$
\begin{equation} \label{adj-eqn-sim}
-\sum_{i,j=1}^{n}\partial_i\big(\rho^{-2\alpha}(a_{ji} \partial_{j} v - \tilde{G}_i)\big)+ \lambda\rho^{-2\alpha}  v=  \sqrt{\lambda} \rho^{-2\alpha} \tilde{g}
\end{equation}
in $\Omega$ with the boundary condition
\begin{equation}  \label{v-0721-eqn}
v =0 \quad \text{on} \ \Gamma,
\end{equation}
where
\[
\tilde{G}(y) = \rho^{-\gamma}G(y), \quad \tilde{g}(y) =  \rho^{-\gamma} g(y) .
\]
Observe that
\[
\begin{split}
\|\rho^{\gamma} \tilde{G}\|_{L^q} = \|G\|_{L^q} ,\quad 
\|\rho^{\gamma} \tilde{g}\|_{L^q} = \|g\|_{L^q},
\end{split}
\]where $L^q=L^q(\Omega,\,\mathrm{d}\mu_\sigma )$.
By {\bf Case I}, there is a unique solution $v \in \mathring{W}^{1,q}(\Omega,  \rho^{\gamma q}\mathrm{d}\mu_\sigma)$ to \eqref{adj-eqn-sim}-\eqref{v-0721-eqn}, which satisfies
\begin{equation}
              \label{eq10.35}
          \begin{split}
& \int_{\Omega} \big(|\rho^{\gamma}Dv|^q + \lambda^{q/2} |\rho^{\gamma} v|^q \big) \,\mathrm{d}\mu_\sigma(y) \\
\leq & C\int_{\Omega} \big(|G|^q + |g|^q \big) \,\mathrm{d}\mu_\sigma(y).
\end{split}
\end{equation}
 Then,  by testing \eqref{equation} with $v$, and testing \eqref{adj-eqn-sim} with $u$, and by using the definitions of $\tilde{G}$ and $\tilde{g}$, we obtain
\[
\begin{split}
 \int_{\Omega}\big( \rho^{-\gamma}G \cdot  D u
+  \sqrt{\lambda}  \rho^{-\gamma}g  u \big)\,\mathrm{d}\mu 
= \int_{\Omega}\big( F\cdot  D v +  \sqrt{\lambda} f v \big)\,\mathrm{d}\mu.
\end{split}
\]
It's equivalent to
\[
\begin{split}
& \int_{\Omega}\left( G \cdot (\rho^{\gamma} D u)
+  \sqrt{\lambda}  g \rho^{\gamma} u \right)\,d\mu_\sigma \\
=& \int_{\Omega}\left( (\rho^{\gamma}F)\cdot  (\rho^{\gamma}D v) +  \sqrt{\lambda} (\rho^{\gamma}f) (\rho^{\gamma}v) \right)\,d\mu_\sigma.
\end{split}
\]
Now,  it follows from H\"older's inequality and \eqref{eq10.35} that
\[
\begin{split}
& \left|\int_{\Omega}\big(G\cdot (\rho^{\gamma} D u)
+  \sqrt{\lambda} g \rho^{\gamma} u \big)\,\mathrm{d}\mu_\sigma\right| \\
\leq & C \|\rho^{\gamma} F\|_{L^p} \|\rho^{\gamma}  D v\|_{L^q} +  C\sqrt{\lambda} \|\rho^{\gamma} f\|_{L^{p}} \|\rho^{\gamma}v\|_{L^q}\\
\leq &  C\Big(\|\rho^{\gamma} F\|_{L^p} + \|\rho^{\gamma} f\|_{L^{p}}\Big)
\Big( \|G\|_{L^q} + \|g\|_{L^q} \Big).
\end{split}
\]
From the last estimate and as $G$ and $g$ are arbitrary, we obtain \begin{equation*}\begin{aligned}
\|\rho^\gamma Du\|_{L^{p}}+\sqrt{\lambda}\|\rho^\gamma u\|_{L^{p}}\leq C\left(\|\rho^\gamma F\|_{L^{p}}+\|\rho^\gamma f\|_{L^{p}}\right).
\end{aligned}\end{equation*}
\end{proof}

Finally, we present a local boundary  estimate for the equation 
\begin{equation} \label{nequation}
-\sum_{i,j=1}^n\partial_i(\rho^{-2\alpha}(a^{ij}\partial_ju- F_i))=\rho^{-2\alpha}f \quad \text{in} \; Q_2
\end{equation}
with the zero Dirichlet boundary condition
\begin{equation}\label{b0}
 	u=0\quad  \mbox{on }Q_2\cap \Gamma. 
 \end{equation} 

\begin{cor}
Let $\alpha \in [\frac{1}{2},\infty)$, $\kappa \in (0,1)$, $R_0 \in (0,\infty)$, $1\leq p^*<n $, and $p^* \leq p$  satisfy \eqref{domain}. Then there exists $\delta_0 = \delta_0(n,\kappa,\alpha,p,p^*) \in (0,1)$ such that the following assertion holds. Suppose that \eqref{elliptic1} and Assumption
$(\delta_0,R_0)$ are satisfied. If $u \in \mathring{W}^{1,p^*}(Q_2,\rho^{\gamma p^*}\mathrm{d}\mu_\sigma)$ is a weak solution of \eqref{nequation}-\eqref{b0} with $F \in L^p(Q_2,\rho^{\gamma p}\mathrm{d}\mu_\sigma)^n$ and $f \in L^{p^*}(Q_2,\rho^{\gamma p^*}\mathrm{d}\mu_\sigma)$, then $u \in \mathring{W}^{1,p}(Q_1,\rho^{\gamma p}\mathrm{d}\mu_\sigma)$ and
\begin{equation}\label{eq10}
\begin{aligned}
&\|u\|_{L^p(Q_1,\,\rho^{\gamma p}\mathrm{d}\mu_\sigma)} + \|Du\|_{L^p(Q_1,\,\rho^{\gamma p}\mathrm{d}\mu_\sigma)} \\
\leq &C\left(\|Du\|_{L^{p^*}(Q_2,\,\rho^{\gamma p^*}\mathrm{d}\mu_\sigma)}+         \|F\|_{L^p(Q_2,\,\rho^{\gamma p}\mathrm{d}\mu_\sigma)} + C\|f\|_{L^{p^*}(Q_2,\,\rho^{\gamma p^*}\mathrm{d}\mu_\sigma)}\right),
\end{aligned}
\end{equation}
where $C = C(n,\kappa,\alpha,p,p^*,R_0) > 0$.

\end{cor}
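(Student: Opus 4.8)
The plan is to put the equation in the setting of Theorem~\ref{main theorem} by inserting a zeroth order term, to localize with a cutoff, and then to recover the integrability gain $p^*\rightsquigarrow p$ by a duality argument built on the weighted Sobolev inequality Lemma~\ref{WE}. First I set $\lambda:=\lambda_0R_0^{-2}$, where $\lambda_0$ is chosen large enough that Theorem~\ref{main theorem} applies simultaneously at the four exponents $p,\ p^*,\ p'=\tfrac{p}{p-1},\ (p^*)'=\tfrac{p^*}{p^*-1}$, and let $\delta_0$ be the smallest of the corresponding thresholds; then $u$ solves $-\partial_i(\rho^{-2\alpha}(a^{ij}\partial_ju-F_i))+\lambda\rho^{-2\alpha}u=\rho^{-2\alpha}(f+\lambda u)$ in $Q_2\setminus\Gamma$. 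Pick $\zeta\in C_c^\infty(Q_{3/2})$ with $\zeta\equiv1$ on $Q_1$, $|D\zeta|\le C$, and put $v:=u\zeta$. Since $u\in\mathring W^{1,p^*}(Q_2,\rho^{\gamma p^*}\mathrm{d}\mu_\sigma)$, the function $v$ lies in $\mathring W^{1,p^*}(\Omega,\rho^{\gamma p^*}\mathrm{d}\mu_\sigma)$ and is a weak solution in $\Omega$ of
\[
-\partial_i\big(\rho^{-2\alpha}(a^{ij}\partial_jv-\hat F_i)\big)+\lambda\rho^{-2\alpha}v=\rho^{-2\alpha}\hat g,\qquad v=0\text{ on }\Gamma,
\]
with $\hat F_i=\zeta F_i+u\,a^{ij}\partial_j\zeta$ and $\hat g=f\zeta-(a^{ij}\partial_ju)\partial_i\zeta+F_i\partial_i\zeta+\lambda u\zeta$. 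By Lemma~\ref{WE} applied to $u$ on $Q_2$ (legitimate by \eqref{domain}) one has $\|\rho^\gamma u\|_{L^p(Q_2,\mathrm{d}\mu_\sigma)}\le C\|\rho^\gamma Du\|_{L^{p^*}(Q_2,\mathrm{d}\mu_\sigma)}$; hence $\rho^\gamma\hat F\in L^p$, the part $\hat g_p:=F_i\partial_i\zeta+\lambda u\zeta$ of the source satisfies $\rho^\gamma\hat g_p\in L^p$, while the remaining part $\hat g_{p^*}:=f\zeta-(a^{ij}\partial_ju)\partial_i\zeta$ satisfies only $\rho^\gamma\hat g_{p^*}\in L^{p^*}$, and all of $\hat F,\hat g$ are supported in $Q_{3/2}$.

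Next I estimate $\|\rho^\gamma Dv\|_{L^p(\Omega,\mathrm{d}\mu_\sigma)}$ by duality. Given $\Psi\in C_c^\infty(Q_1\setminus\Gamma)^n$ with $\|\Psi\|_{L^{p'}(\Omega,\mathrm{d}\mu_\sigma)}\le1$, set $\tilde G:=\rho^{-\gamma}\Psi$, so $\tilde G\in C_c^\infty(Q_1\setminus\Gamma)^n$ and $\|\rho^\gamma\tilde G\|_{L^r(\Omega,\mathrm{d}\mu_\sigma)}<\infty$ for every $r\ge1$. Applying Theorem~\ref{main theorem} to the coefficients $a^{ji}$ (which inherit \eqref{elliptic1} and Assumption $(\delta_0,R_0)$) at exponents $p'$ and $(p^*)'$ — the two solutions coinciding because the data is smooth with compact support, by uniqueness and Lemma~\ref{whole integral} — yields a single $\psi\in\mathring W^{1,p'}(\Omega,\rho^{\gamma p'}\mathrm{d}\mu_\sigma)\cap\mathring W^{1,(p^*)'}(\Omega,\rho^{\gamma(p^*)'}\mathrm{d}\mu_\sigma)$ solving $-\partial_i(\rho^{-2\alpha}(a^{ji}\partial_j\psi-\tilde G_i))+\lambda\rho^{-2\alpha}\psi=0$ in $\Omega$, $\psi=0$ on $\Gamma$, with $\|\rho^\gamma D\psi\|_{L^{p'}}+\sqrt\lambda\,\|\rho^\gamma\psi\|_{L^{p'}}\le C\|\rho^\gamma\tilde G\|_{L^{p'}}\le C$. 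Since $\mathring W^{1,s}=W^{1,s}_0$ on $\Omega$, $\psi$ is an admissible test function for the equation of $v$ and $v$ for the equation of $\psi$; testing one against the other, the symmetry $a^{ji}=a^{ij}$ and the zeroth order terms cancel and give
\[
\int_\Omega\Psi\cdot(\rho^\gamma Dv)\,\mathrm{d}\mu_\sigma=\int_\Omega\tilde G\cdot Dv\,\mathrm{d}\mu=\int_\Omega\hat g\,\psi\,\mathrm{d}\mu+\int_\Omega\hat F\cdot D\psi\,\mathrm{d}\mu.
\]

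Now I bound the right-hand side. By H\"older and the $L^{p'}$-bound on $D\psi$, $\big|\int\hat F\cdot D\psi\,\mathrm{d}\mu\big|\le C\|\rho^\gamma\hat F\|_{L^p}\le C(\|\rho^\gamma F\|_{L^p(Q_2)}+\|\rho^\gamma Du\|_{L^{p^*}(Q_2)})$; the $\hat g_p$-piece of $\int\hat g\,\psi\,\mathrm{d}\mu$ is likewise controlled by $C\|\rho^\gamma\hat g_p\|_{L^p}\|\rho^\gamma\psi\|_{L^{p'}}\le C(\|\rho^\gamma F\|_{L^p(Q_2)}+\|\rho^\gamma Du\|_{L^{p^*}(Q_2)})$, the fixed power of $\lambda$ being absorbed into $C=C(R_0,\dots)$. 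For the $\hat g_{p^*}$-piece, which lives on $Q_{3/2}$, H\"older with exponents $p^*,(p^*)'$ gives a factor $\|\rho^\gamma\psi\|_{L^{(p^*)'}(Q_2)}$, and here Lemma~\ref{WE} applied to $\psi$ (cut off to $Q_2$), with source exponent $p'$ and target exponent $(p^*)'$, yields $\|\rho^\gamma\psi\|_{L^{(p^*)'}(Q_2)}\le C\|\rho^\gamma D\psi\|_{L^{p'}(\Omega)}\le C$; thus this piece is $\le C(\|\rho^\gamma f\|_{L^{p^*}(Q_2)}+\|\rho^\gamma Du\|_{L^{p^*}(Q_2)})$. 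Taking the supremum over $\Psi$ gives $\|\rho^\gamma Dv\|_{L^p(\Omega,\mathrm{d}\mu_\sigma)}\le C(\|\rho^\gamma F\|_{L^p(Q_2)}+\|\rho^\gamma f\|_{L^{p^*}(Q_2)}+\|\rho^\gamma Du\|_{L^{p^*}(Q_2)})$; since $Dv=Du$, $v=u$ on $Q_1$ and $\|\rho^\gamma u\|_{L^p(Q_1)}\le\|\rho^\gamma u\|_{L^p(Q_2)}\le C\|\rho^\gamma Du\|_{L^{p^*}(Q_2)}$ by Lemma~\ref{WE}, estimate \eqref{eq10} follows, and a routine approximation gives $u\in\mathring W^{1,p}(Q_1,\rho^{\gamma p}\mathrm{d}\mu_\sigma)$.

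The main obstacle is the duality step: one must verify that the adjoint problem, together with the Sobolev inequality it needs, is admissible under the hypotheses \eqref{domain} — i.e. that the constraints \eqref{domain} are (essentially) invariant under the substitution $(p,p^*)\mapsto((p^*)',p')$, so that Theorem~\ref{main theorem} at exponents $p',(p^*)'$ and Lemma~\ref{WE} with source $p'$ and target $(p^*)'$ are all available, the borderline cases ($p^*$ near $1$, where $(p^*)'$ is large, or $p$ near $\tfrac{n}{n-1}$, where $p'\ge n$) being covered by the $L^\infty$-endpoint of the weighted Sobolev inequality together with the Hardy inequality Lemma~\ref{wp}. The potential mismatch of test spaces (a priori $v$ and $\psi$ live at conjugate integrability) is what forces one to produce a single adjoint solution lying in both $\mathring W^{1,p'}$ and $\mathring W^{1,(p^*)'}$, and is otherwise harmless thanks to the identity $\mathring W^{1,s}(\Omega,\cdot)=W^{1,s}_0(\Omega,\cdot)$.
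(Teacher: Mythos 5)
Your overall architecture is the same as the paper's: insert a zeroth-order term $\lambda\rho^{-2\alpha}u$ with $\lambda\sim\lambda_0R_0^{-2}$, localize with a cutoff so that the localized function solves an inhomogeneous problem on all of $\Omega$, use Lemma~\ref{WE} to put the commutator term $u\,D\zeta$ into $L^p$, and close by testing against an adjoint solution produced by Theorem~\ref{main theorem}. The divergence — and the gap — is in how you control the adjoint solution. You normalize the dual data in $L^{p'}$, obtain $\|\rho^\gamma D\psi\|_{L^{p'}}\le C$ from Theorem~\ref{main theorem} at exponent $p'$, and then claim $\|\rho^\gamma\psi\|_{L^{(p^*)'}(Q_2,\,\mathrm{d}\mu_\sigma)}\le C\|\rho^\gamma D\psi\|_{L^{p'}}$ by Lemma~\ref{WE} ``with source $p'$ and target $(p^*)'$'', asserting that \eqref{domain} is essentially invariant under $(p,p^*)\mapsto((p^*)',p')$. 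Only the first two inequalities of \eqref{domain} are self-dual. The third, $-2\alpha+\sigma+1+\frac{2\alpha-2\sigma}{P}>0$ for the \emph{target} exponent $P$, is precisely the local finiteness of the measure $\rho^{\gamma P}\mathrm{d}\mu_\sigma$ near $\Gamma$, and with $P=(p^*)'$ it is a genuinely new constraint that the hypotheses do not give. Concretely, take $n=3$, $\alpha=1$, $\sigma=0.1$, $p=p^*=1.1$: all of \eqref{domain} holds (so the corollary's hypotheses are met), yet $(p^*)'=11$, $\gamma+\frac{2\alpha-2\sigma}{(p^*)'}=-0.9+\frac{1.8}{11}<0$, and moreover $p'=11>n$, so Lemma~\ref{WE} is inapplicable on two counts. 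Your proposed fallback (the ``$L^\infty$-endpoint'' plus Lemma~\ref{wp}) cannot repair this: when the third condition fails at exponent $(p^*)'$ the weight $\rho^{\gamma(p^*)'}\mathrm{d}\mu_\sigma$ is not even a finite measure on $Q_2$, so no sup-norm bound on $\rho^\gamma\psi$ yields the needed $L^{(p^*)'}$ bound; one would have to exploit quantitative decay of $\psi$ near $\Gamma$, which you do not establish.

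The paper sidesteps this entirely by reversing which exponent the adjoint problem is solved at: it normalizes the dual data $(G,g)$ in $L^{q^*}$ with $q^*=(p^*)'$ and applies Theorem~\ref{main theorem} once at that exponent. Since the theorem controls $\sqrt{\lambda}\,\|\rho^\gamma v\|$ in the \emph{same} $L^{q^*}$ norm as the data, the bound on $\|\rho^\gamma v\|_{L^{q^*}}$ needed against the $L^{p^*}$ part of the source comes for free — no Sobolev embedding of the adjoint solution is ever invoked — and the bound on $\|\rho^\gamma Dv\|_{L^{q}}$ ($q=p'\le q^*$) needed against the $L^p$ part follows by H\"older \emph{downward} on the bounded set $Q_2$, which is always legitimate. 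To repair your argument you should adopt this normalization (or otherwise avoid upgrading the integrability of $\psi$ via Lemma~\ref{WE}); as written, the step bounding the pairing with $\hat g_{p^*}$ does not go through under the stated hypotheses.
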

\begin{proof} We adopt the method presented in \cite{HT2}, which makes use of a duality argument.
Since \( u \in \mathring{W}^{1,p^*}(Q_2, \rho^{\gamma p^*} \mathrm{d}\mu_\sigma) \), it follows from Lemma \ref{WE} that
\begin{equation}\label{eq11}
 u \in L^{p}(Q_2, \rho^{\gamma p} \mathrm{d}\mu_\sigma).
\end{equation}
Let \( \eta \in C_0^\infty(Q_2) \) satisfying \( \eta \equiv 1 \) on \( Q_1 \). 
Similar to \eqref{ctn}, we see that \( w = u\eta \in \mathring{W}^{1,p^*}(Q_2, \rho^{\gamma p^*} \mathrm{d}\mu_\sigma) \) is a weak solution of
\begin{equation}\label{eqw}
 - \partial_i \left( \rho^{-2\alpha} (a^{ij} \partial_j w - \widetilde{F}_i) \right)+\rho^{-2\alpha} \lambda w  = \sqrt{\lambda}\rho^{-2\alpha}\tilde{f} \quad \text{in } Q_2
\end{equation}
with the boundary condition \( w = 0 \) on \( Q_2 \cap \Gamma \) , where
\begin{equation}\label{Ff}
\widetilde{F}_i = F_i \eta - a^{ij} u \partial_j \eta, \quad \tilde{f} = f \eta   - \frac{1}{\sqrt{\lambda}}\partial_i \eta (a^{ij} \partial_j u - F_i),
\end{equation}
and \( \lambda > \lambda_0 R_0^{-2} \) is a constant which will be chosen at the end.

Next, let \( q = p/(p - 1) \), \( q^* = p^*/(p^* - 1) \). Choose \( G = (G_1, \ldots, G_n) \in C_0^\infty(Q_1)^n \) and \( g \in C_0^\infty(Q_1) \) to satisfy
\[
\|G\|_{L^{q^*}(Q_1, \,\rho^{\gamma q^*}\mathrm{d}\mu_\sigma)} = \|g\|_{L^{q^*}(Q_1,\, \rho^{\gamma q^*}\mathrm{d}\mu_\sigma)} = 1.
\]Since $p^*\leq p$, we have $q\leq q^*$. Then, by H\"older's inequality, we obtain
\[
\|G\|_{L^{q}(Q_1, \,\rho^{\gamma q}\mathrm{d}\mu_\sigma)} + \|g\|_{L^{q}(Q_1,\, \rho^{\gamma q}\mathrm{d}\mu_\sigma)} \leq C.
\]
 By Theorem \ref{main theorem}, there is a weak solution \( v \in \mathring{W}^{1,q^*}(Q_2, \rho^{\gamma q^*} \mathrm{d}\mu_\sigma) \) to
\begin{equation}\label{eqvv}
  - \partial_i \left( \rho^{-2\alpha}( a^{ji} \partial_j v - G_i )\right) +\rho^{-2\alpha} \lambda v= \sqrt{\lambda}\rho^{-2\alpha} g \quad \text{in } Q_2
\end{equation}
with the boundary condition \( v = 0 \) on \(  Q_2 \cap \Gamma \). Moreover,
\begin{equation}\label{eq14}
\sqrt{\lambda} \|\rho^\gamma v\|_{L^{q^*}(Q_2, \mathrm{d}\mu_\sigma)} + \|\rho^\gamma D v\|_{L^{q^*}(Q_2, \mathrm{d}\mu_\sigma)} \leq C.
\end{equation}
Since \(q\leq q^* \), we have
\begin{equation}\begin{aligned}\label{eq16}
\|\rho^\gamma Dv\|_{L^{q}(Q_2, \mathrm{d}\mu_\sigma)}
\leq C .
\end{aligned}\end{equation}
Testing \eqref{eqw} and \eqref{eqvv} with \( v \) and \( w \) respectively, we get
\[
\int_{Q_1} \left( (\rho^\gamma D u) \cdot (\rho^\gamma G) + \sqrt{\lambda} (\rho^\gamma u)(\rho^\gamma g) \right) \mathrm{d}\mu_\sigma
= \int_{Q_2} \left( (\rho^\gamma D v) \cdot (\rho^\gamma \widetilde{F}) + \sqrt{\lambda}(\rho^\gamma v) (\rho^\gamma \tilde{f}) \right) \mathrm{d}\mu_\sigma.
\]
Then, it follows from H\"older's inequality that
\begin{equation}\begin{aligned}\label{eq15}
 &\left| \int_{Q_1} \left( (\rho^\gamma D u) \cdot(\rho^{\gamma} G) + \sqrt{\lambda} (\rho^\gamma u)(\rho^{\gamma} g) \right) \mathrm{d}\mu_\sigma \right|
\\ \leq & \|\rho^\gamma D v\|_{L^q(Q_2, \mathrm{d}\mu_\sigma)} \|\rho^\gamma \widetilde{F}\|_{L^p(Q_2, \mathrm{d}\mu_\sigma)} + \sqrt{\lambda}\|\rho^\gamma v\|_{L^{q^*}(Q_2, \mathrm{d}\mu_\sigma)} \|\rho^\gamma \tilde{f}\|_{L^{p^*}(Q_2, \mathrm{d}\mu_\sigma)}.
\end{aligned}\end{equation}
By \eqref{eqvv}, we can see that $v\in \mathring{W}^{1,q^*}(Q_2,\rho^{\gamma p^*}\mathrm{d}\mu_\sigma)$
\begin{equation}\label{}
  - \partial_i \left( \rho^{-2\alpha}( a^{ji} \partial_j v - G_i )\right) =\rho^{-2\alpha} \widetilde{g} \quad \text{in } Q_2
\end{equation} with $\widetilde{g}=\sqrt{\lambda}g-\lambda v$.
It then follows from \eqref{eq15}, \eqref{eq16}, \eqref{Ff}, and the arbitrariness of \( G \) and \( g \) that
\begin{equation}\begin{aligned}\label{eq17}
&\|\rho^\gamma D u\|_{L^p(Q_1, \mathrm{d}\mu_\sigma)} + \sqrt{\lambda} \|\rho^\gamma u\|_{L^p(Q_1, \mathrm{d}\mu_\sigma)}\\
\leq &C \|\rho^\gamma \widetilde{F}\|_{L^p(Q_2, \mathrm{d}\mu_\sigma)} + C \sqrt{\lambda}\|\rho^\gamma \tilde{f}\|_{L^{p^*}(Q_2, \mathrm{d}\mu_\sigma)}\\
\leq & C  \|\rho^\gamma F\|_{L^p(Q_2, \mathrm{d}\mu_\sigma)} + C \|\rho^\gamma u\|_{L^p(Q_2, \mathrm{d}\mu_\sigma)}
+ C \sqrt{\lambda} \|\rho^\gamma f\|_{L^{p^*}(Q_2, \mathrm{d}\mu_\sigma)} \\&+ C  \|\rho^\gamma D u\|_{L^{p^*}(Q_2, \mathrm{d}\mu_\sigma)}\\
\leq & C \left(\|\rho^\gamma D u\|_{L^{p^*}(Q_2, \mathrm{d}\mu_\sigma)}  +\|\rho^\gamma F\|_{L^p(Q_2, \mathrm{d}\mu_\sigma)}+C \sqrt{\lambda} \|\rho^\gamma f\|_{L^{p^*}(Q_2, \mathrm{d}\mu_\sigma)}\right),
\end{aligned}\end{equation} 
where the last inequality follows from Lemma \ref{WE}. Moreover, the constant $C$ is independent of $\lambda$. Finally, we fix  $\lambda$ to be large enough such that $\lambda>\lambda_0R_0^{-2}$, which allows us to conclude the desired result.
\end{proof}

\section*{Declarations}
The authors declare that they have no conflicts of interests.

\end{document}